\documentclass[10pt]{article}
\usepackage{latexsym,amsfonts,amsthm,amsmath,amssymb,cases}
\usepackage{cite}
\usepackage{enumerate}
\usepackage{graphicx}
\usepackage{color}

\newtheorem{thm}{Theorem}[section]
\newtheorem{cor}{Corollary}[section]
\newtheorem{lm}{Lemma}[section]

\newtheorem{assumption}{Assumption}[section]
\newtheorem{remark}{Remark}[section]

\newtheorem{defn}{Definition}[section]

\newcounter{nextauthor}
\setcounter{nextauthor}{1}
\def\mathrm{\mbox}

\numberwithin{remark}{section}

 \textheight 9.5in
 \textwidth 6.5in
 \topmargin -15mm
 \oddsidemargin 0mm
 \evensidemargin 0mm
 \parskip 1mm
 \parindent 2em

\begin{document}
\title{{\Large \bf Set-valued It\^{o}'s formula with an application to the general set-valued backward stochastic differential equation}\thanks{This work was supported by the National Natural Science Foundation of China (11471230, 11671282).}}
\author{Yao-jia Zhang, Zhun Gou and Nan-jing Huang\thanks{Corresponding author.  E-mail addresses: nanjinghuang@hotmail.com; njhuang@scu.edu.cn}\\
{\small\it Department of Mathematics, Sichuan University, Chengdu,	Sichuan 610064, P.R. China}}
\date{}
\maketitle \vspace*{-9mm}
\begin{center}
\begin{minipage}{5.7in}
\noindent{\bf Abstract.} The overarching goal of this paper is to establish a set-valued It\^{o}'s formula.  As an application,  we obtain the existence and uniqueness of solutions for the general set-valued backward stochastic differential equation which gives an answer to an open question proposed by Ararat et al. (C. Ararat, J. Ma and W.Q. Wu, Set-valued backward stochastic differential equation, arXiv:2007.15073).
\\ \ \\
{\bf Keywords:} Set-valued stochastic integral; Set-valued It\^{o}'s formula; Set-valued backward stochastic differential equation; Hukuhara difference; Picard iteration.
\\ \ \\
{\bf 2020 Mathematics Subject Classification}: 26E25, 28B20, 60G07, 60H05, 60H10.
\end{minipage}
\end{center}

\section{Introduction}
Set-valued differential equations, both deterministic and stochastic, have attracted the attention of many  scholars due to the wide applications of set-valued functions (mappings) in practical problems \cite{Aubin, Aubin+, Deimling, Kisielewicz2013Stochastic, Tolstonogov}. Recently,  many applications about set-valued deterministic/stochastic differential equations are found in computer science \cite{Xu2018Authenticating}, economics and finance \cite{Li2010Strong,Hamel2011Set-valued, Kisielewicz2013Stochastic, Aswani2019Statistics}.

It\^{o}'s formula, established firstly by It\^{o} \cite{Ito1944Stochastic}, plays an important role in stochastic analysis \cite{Ito1951On,Ito1951Ona}. In order to study some new stochastic differential equations, the It\^{o}'s formula has been extended in several directions.  For example, Applebaum and Hudson \cite{Applebaum1984Fermion} proved an It\^{o} product formula for stochastic integrals against fermion Brownian motion. Al-Hussaini and Elliott \cite{Al-Hussaini1987Nagoya} showed an It\^{o}'s formula for a continuous semi-martingale $X_t$ with a local time $L^\alpha_t$ at $\alpha$. Gradinaru et al. \cite{Gradinaru2005m} gave an It\^{o}'s formula for nonsemimartingales. Catuogno and Olivera \cite{Catuogno2014Time} provided a fresh It\^{o}'s formula for time dependent tempered generalized functions. Recently, a generalised It\^o's formula for L\'{e}vy-driven Volterra Processes was established by Bender et al.  \cite{Bender2015A}. However, to the best of our knowledge, there is no set-valued It\^{o}'s formula. The first goal of this paper is to give a set-valued It\^{o}'s formula.

It is well known that backward stochastic differential equations (BSDEs),  introduced by Pardoux and Peng \cite{Peng1990Adapted}, have been studied extensively in the literature. For instance, we refer the reader to \cite{Hamadene1995Zero-sum,Hu2012Backward,Karoui1997Backward,Li2012Mean-field,Peng2009Backward}. Recently, various examples have been given in the literature \cite{Alexander2018Mean-Field,Borkowski2017Forward,Elliott2012Markovian,Wang2014Linear,Zhang2017Backward} to motivate the study of BSDEs. Very recently, Ararat et al. \cite{Ararat2020Set-valued} provided some sufficient conditions to ensure the existence and uniqueness of solutions for the following set-valued backward stochastic differential equation:
\begin{align*}
Y_t=\xi+\int_t^Tf(t,Y_s)ds \ominus \int_t^TZ_sdW_s,
\end{align*}
where $(W_s,\; s \geqslant 0)$ is a standard $m$-dimension Brownian movement with $dW_idW_j=0(i\neq j)$.

However, as pointed out by Ararat et al. \cite{Ararat2020Set-valued}, the systematic study of the following general set-valued backward stochastic differential equation (GSVBSDE):
\begin{align}\label{e1.1}
Y_t=\xi+\int_t^Tf(t,Y_s,Z_s)ds \ominus \int_t^TZ_sdW_s,
\end{align}
is still widely open. It is worth mentioning that \eqref{e1.1} is a modelling tool used to capture the risk measure problem arising in finance \cite{Ararat2020Set-valued}.  The second purpose of this paper is to study the existence and uniqueness of \eqref{e1.1} by using the set-valued It\^{o}'s formula.

The rest of this paper is structured as follows. The second section recalls some necessary preliminaries, including some properties of Hukuhara difference, set-valued stochastic processes and stochastic integrals. After that in Section 3, we obtain the set-valued it\^{o}'s formula by employing some properties of set-valued stochastic integrals. Finally, we show the existence and uniqueness of the solutions to \eqref{e1.1} as the application of the set-valued It\^{o}'s formula.

\section{Preliminaries}
\noindent \setcounter{equation}{0}
In this section, we recall some necessary notations and definitions.
\subsection{Hukuhara difference}
For a Hilbert space $\mathbb{X}$, let $\mathcal{P}(\mathbb{X})$ be the set of all nonempty subsets of $\mathbb{X}$.  Let $\mathcal{L}(\mathbb{X})$ be the set of all closed sets in $\mathcal{P}(\mathbb{X})$ and $\mathcal{K}(\mathbb{X})$ be the set of all compact convex sets in $\mathcal{P}(\mathbb{X})$.

For any $A,B \in \mathcal{K}(\mathbb{X})$, the Hausdorff distance between $A$ and $B$ is defined by
$$h(A,B):=\max\left\{\sup_{x\in A}\inf_{y\in B}d(x,y),\sup_{x\in B}\inf_{y\in A}d(x,y)\right\}$$
and the mapping $\|\cdot\|:  \mathcal{K}(\mathbb{X})\to [0,\infty)$ is defined by
\begin{eqnarray}\label{e1+}
\|A\|:=h(A,\left\{0\right\})=\sup\limits_{a \in A }|a|,\quad \forall A\in \mathcal{K}(\mathbb{X}).
\end{eqnarray}
Moreover, for any $A,B \in \mathcal{K}(\mathbb{R}^n)$ and $\alpha \in \mathbb{R}$, define
$$
A+B:=\left\{a+b:a\in A,b\in B\right\};\qquad \alpha A:=\left\{\alpha a : a \in A\right\}.
$$

Clearly, for $A,B,C\in\mathcal{K}(\mathbb{R}^n)$, the following cancellation law holds:
$$
A+C=B+C \Longleftrightarrow  A=B.
$$

Assume that $(\Omega,\mathcal{F},\mathbb{P})$ is a probability measure space. $A \in \mathcal{F}$ is called an atomic set, if $\mathbb{P}(A)>0$ implies that $\mathbb{P}(B)=0$ or $\mathbb{P}(A\setminus B)=0$ for any Borel subset $B\subset A$. If there is no atomic sets under the measure $\mathbb{P}$, then $\mathbb{P}$ is called a nonatomic probability measure. In this paper, we always assume $\mathbb{P}$ is nonatomic. In the sequel, we assume that $(\Omega,\mathcal{F},\mathbb{F},\mathbb{P})$ is a filtered probability space satisfying the usual conditions.

Now we recall the Hukuhara difference, which defines the ``subtraction" in the space $\mathcal{K}(\mathbb{X})$.
\begin{defn}\label{d2.1} \cite{Hukuhara1967Integration}
The Hukuhara difference is defined as follows:
$$
A \ominus B =C \Longleftrightarrow A=B+C,\quad A,B\in \mathcal{K}(\mathbb{X}).
$$
\end{defn}

For our main results, we need the following lemmas.

\begin{lm}\label{12.1}  \cite{Ararat2020Set-valued}
Let $A,B,A_1,A_2,B_1,B_2,C\in \mathcal{K}(\mathbb{R}^n)$. The following identities hold if all the Hukuhara differences involved exist.
\begin{enumerate}[($\romannumeral1$)]
\item $A\ominus A=\left\{ 0 \right\}$, $A\ominus \left\{ 0 \right\}$=$A$;
\item $(A_1+B_1)\ominus (A_2+B_2)=(A_1\ominus A_2)+(B_1\ominus B_2)$;
\item $(A_1+B_1)\ominus B_2 =A_1+(B_1\ominus B_2)=(A_1\ominus B_2)+B_1$;
\item $A_1+(B_1\ominus B_2)=(A_1 \ominus B_2)+B_1$;
\item $A=B+(A\ominus B)$.
\end{enumerate}
\end{lm}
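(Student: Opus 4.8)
The plan is to convert every assertion about the Hukuhara difference into one about Minkowski addition through the defining equivalence $A \ominus B = C \iff A = B + C$ of Definition \ref{d2.1}, and then to read the identities off using the commutativity and associativity of addition on $\mathcal{K}(\mathbb{R}^n)$ together with the cancellation law $A + C = B + C \iff A = B$ recorded above. As the hypotheses permit, I would assume that each difference written down exists, so that every symbol $A \ominus B$ names a genuine set $C \in \mathcal{K}(\mathbb{R}^n)$ with $A = B + C$. The cancellation law guarantees that such a $C$ is unique; hence the differences are well defined, and to verify each claimed identity it suffices to exhibit a single set satisfying the defining relation.

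First I would dispatch (i) and (v), which are immediate from the definition: the equality $A = A + \{0\}$ gives $A \ominus A = \{0\}$, the equality $A = \{0\} + A$ gives $A \ominus \{0\} = A$, and (v) is merely the definition $A \ominus B = C$ rewritten as $A = B + (A \ominus B)$. These base cases also let me insert or delete $\{0\}$-summands freely in the later steps. The heart of the argument is (ii): setting $C_1 := A_1 \ominus A_2$ and $C_2 := B_1 \ominus B_2$, the definition gives $A_1 = A_2 + C_1$ and $B_1 = B_2 + C_2$, and adding these while regrouping by associativity and commutativity yields
$$A_1 + B_1 = (A_2 + B_2) + (C_1 + C_2),$$
which is exactly the assertion $(A_1 + B_1) \ominus (A_2 + B_2) = C_1 + C_2$, i.e. identity (ii).

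The remaining claims then drop out by specialisation. Taking the first summand of the subtrahend to be $\{0\}$ in (ii) (that is, $A_2 = \{0\}$) and invoking (i) gives the first equality of (iii), namely $(A_1 + B_1) \ominus B_2 = A_1 + (B_1 \ominus B_2)$; taking the second summand to be $\{0\}$ instead (that is, $B_2 = \{0\}$) gives the second equality $(A_1 + B_1) \ominus B_2 = (A_1 \ominus B_2) + B_1$. Since these two right-hand sides are both equal to the single Hukuhara difference $(A_1 + B_1) \ominus B_2$, their equality is precisely (iv). I expect no genuine obstacle here, since every step is a direct manipulation of the defining relation; the only point requiring care is the well-definedness of the differences, which is exactly where the cancellation law (equivalently, the uniqueness of the Hukuhara difference) enters, together with the standing hypothesis that all the differences appearing actually exist.
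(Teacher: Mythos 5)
Your proof is correct. Note that the paper itself gives no proof of this lemma---it is quoted verbatim from Ararat et al.\ \cite{Ararat2020Set-valued}---so there is nothing to compare against line by line; what you have supplied is a complete elementary verification straight from Definition \ref{d2.1} and the cancellation law, which is exactly the intended argument. Two small remarks. First, in your derivation of the second equality of (iii) the phrase ``taking $B_2=\{0\}$'' collides notationally with the $B_2$ already present in the statement; what you mean is applying (ii) with the subtrahend decomposed as $B_2+\{0\}$ so that $B_2$ is subtracted from $A_1$ and $\{0\}$ from $B_1$, and it would be worth saying so explicitly. Second, your route to (iv) passes through the auxiliary difference $(A_1+B_1)\ominus B_2$, whose existence is not among the hypotheses of (iv) itself; it does follow from the existence of $A_1\ominus B_2$ (this is precisely Lemma \ref{l2.2}(i), or a one-line check), but a direct verification of (iv) is even shorter: writing $B_1=B_2+(B_1\ominus B_2)$ and $A_1=B_2+(A_1\ominus B_2)$ and adding the missing summand to each side shows both members of (iv) equal $B_2+(A_1\ominus B_2)+(B_1\ominus B_2)$.
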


\begin{lm}\label{l2.2} Let $A,B,C\in \mathcal{K}(\mathbb{R}^n)$. Then
\begin{enumerate}[($\romannumeral1$)]
\item  If $A\ominus C$ exists, then $(A+B)\ominus C$ exists for any $B\in \mathcal{K}(\mathbb{R}^n)$;
\item If $A\ominus C$ and $C\ominus B$ exist, then $A\ominus B$ exists and $A\ominus B=(A\ominus C)+(C\ominus B)$.
\end{enumerate}
\end{lm}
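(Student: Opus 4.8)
The plan is to argue directly from Definition \ref{d2.1}, using only the representation $X = Y + (X \ominus Y)$ recorded in Lemma \ref{12.1}(v) together with two elementary facts about Minkowski addition on $\mathcal{K}(\mathbb{R}^n)$: that it is commutative and associative, and that the sum of two compact convex sets is again compact and convex, hence stays in $\mathcal{K}(\mathbb{R}^n)$. Concretely, to prove that a Hukuhara difference exists I only need to exhibit an explicit set in $\mathcal{K}(\mathbb{R}^n)$ that serves as the witness $C$ in the defining relation $A = B + C$.

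For part ($\romannumeral1$), since $A \ominus C$ exists, Lemma \ref{12.1}(v) gives $A = C + (A \ominus C)$. Adding $B$ on both sides and regrouping by commutativity and associativity yields
\begin{align*}
A + B = C + \big[(A \ominus C) + B\big].
\end{align*}
The bracketed set $(A \ominus C) + B$ lies in $\mathcal{K}(\mathbb{R}^n)$, so by Definition \ref{d2.1} the difference $(A + B) \ominus C$ exists (and in fact equals $(A \ominus C) + B$).

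For part ($\romannumeral2$), the hypotheses give $A = C + (A \ominus C)$ and $C = B + (C \ominus B)$ via Lemma \ref{12.1}(v). Substituting the second relation into the first and regrouping,
\begin{align*}
A = \big[B + (C \ominus B)\big] + (A \ominus C) = B + \big[(A \ominus C) + (C \ominus B)\big].
\end{align*}
Once more the bracketed set belongs to $\mathcal{K}(\mathbb{R}^n)$, so $A \ominus B$ exists and $A \ominus B = (A \ominus C) + (C \ominus B)$, as claimed.

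There is essentially no hard step here: beyond the definition, the argument uses only the stability of $\mathcal{K}(\mathbb{R}^n)$ under Minkowski sums and the algebraic laws of that operation. The one point worth stating carefully is the claimed \emph{equalities} (as opposed to mere existence): uniqueness of the Hukuhara difference is guaranteed by the cancellation law $A + C = B + C \Leftrightarrow A = B$ recorded above, which forces the explicit witness sets produced in each part to coincide with the respective differences.
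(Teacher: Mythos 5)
Your proof is correct and follows essentially the same route as the paper's: both arguments set $M=A\ominus C$ (and $N=C\ominus B$), use the representation $A=C+M$ from Lemma \ref{12.1}(v), and substitute or add $B$ to exhibit the explicit witness for the new Hukuhara difference. Your additional remark on uniqueness via the cancellation law is a harmless refinement of the same argument.
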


\begin{proof}
(i) Set $M=A\ominus C $.  Then $A=C+M$ and so $(A+B)\ominus C= (C+M+B)\ominus C=M+B$. (ii)
Let $M=A\ominus C$, $N=C\ominus B$, then $A=C+M$, $C=B+N$. Taking $C=B+N$ in $A=C+M$, we have $A=B+N+M$, which implies $A\ominus B=M+N=(A\ominus C)+(C\ominus B)$.
\end{proof}

\begin{lm}\cite{Ararat2020Set-valued}\label{l2.4}
\begin{enumerate}[($\romannumeral1$)]
\item The mapping $\|\cdot\|:  \mathcal{K}(\mathbb{X})\to [0,\infty)$ defined by \eqref{e1+} satisfies the properties of a norm;
\item If $A,B \in \mathcal{K}(\mathbb{X})$ and $A\ominus B$ exists, then $h(A,B)=\|A\ominus B\|$;
\item For any $A,B \in \mathcal{K}(\mathbb{R}^n)$,  both $A\ominus B$ and $B\ominus A$ exist if and only if $A$ is a translation of $B$, i.e., $A=B+\left\{c\right\}$,  where $c\in \mathbb{R}^n$.
\end{enumerate}
\end{lm}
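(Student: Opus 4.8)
The plan is to treat the three parts separately, since each relies on a different tool: part ($\romannumeral1$) is a direct verification of the norm axioms on the cone $\mathcal{K}(\mathbb{X})$, part ($\romannumeral2$) is cleanest via support functions, and part ($\romannumeral3$) is a short algebraic argument resting on the cancellation law.

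For part ($\romannumeral1$), note first that $\mathcal{K}(\mathbb{X})$ is only an abelian semigroup under Minkowski addition (there are no additive inverses), so ``norm'' here should be read as the assertion that $\|\cdot\|$ is nonnegative, positive-definite, absolutely homogeneous, and subadditive on this cone. Nonnegativity is immediate from $\|A\|=\sup_{a\in A}|a|$. Since $A$ is nonempty, $\|A\|=0$ forces $|a|=0$ for every $a\in A$, i.e. $A=\{0\}$, giving definiteness. Absolute homogeneity follows from $\|\alpha A\|=\sup_{a\in A}|\alpha a|=|\alpha|\sup_{a\in A}|a|$, and subadditivity from $\|A+B\|=\sup_{a\in A,\,b\in B}|a+b|\le\sup_{a\in A}|a|+\sup_{b\in B}|b|=\|A\|+\|B\|$. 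I would dispatch all of this in one short paragraph.

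For part ($\romannumeral2$), write $C:=A\ominus B$, so that $A=B+C$. I would introduce the support function $s_D(u)=\sup_{d\in D}\langle d,u\rangle$ and invoke the two standard facts that $s_{B+C}=s_B+s_C$ and that $h(D,E)=\sup_{|u|=1}|s_D(u)-s_E(u)|$ for compact convex $D,E$. Then $h(A,B)=\sup_{|u|=1}|s_{B+C}(u)-s_B(u)|=\sup_{|u|=1}|s_C(u)|$, and a short computation (Cauchy--Schwarz for the bound ``$\le$'', and testing the maximizing direction $u=c/|c|$ for ``$\ge$'') shows $\sup_{|u|=1}|s_C(u)|=\sup_{c\in C}|c|=\|C\|=\|A\ominus B\|$. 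If one prefers to avoid support functions, the same conclusion is reachable straight from the definition of $h$: the inequality $h(A,B)\le\|C\|$ comes from writing each $x\in A$ as $x=b+c$ and taking $y=b$, while $h(A,B)\ge\|C\|$ requires a witness, namely the support point $a^{\ast}$ of $A$ in a direction $u$ maximizing $|c|$ over $C$, for which $\langle a^{\ast}-y,u\rangle\ge s_C(u)=\|C\|$ holds for every $y\in B$.

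For part ($\romannumeral3$), the ``if'' direction is immediate: if $A=B+\{c\}$ then $A\ominus B=\{c\}$ and $B\ominus A=\{-c\}$ both exist. For ``only if'', suppose $C:=A\ominus B$ and $D:=B\ominus A$ exist, so $A=B+C$ and $B=A+D$; substituting the second into the first gives $A=A+(C+D)$, and the cancellation law yields $C+D=\{0\}$. The final step is to observe that $C+D=\{0\}$ forces $C$ and $D$ to be singletons: if $c_1,c_2\in C$ and $d\in D$ then $c_1+d=c_2+d=0$, whence $c_1=c_2=-d$; thus $C=\{c\}$, $D=\{-c\}$, and $A=B+\{c\}$. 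The routine parts are ($\romannumeral1$) and the ``if'' half of ($\romannumeral3$); the main obstacle is the lower bound $h(A,B)\ge\|A\ominus B\|$ in part ($\romannumeral2$), since proving it directly requires exhibiting a point of $A$ that is genuinely far from all of $B$, and this is precisely where convexity enters and where the support function (or an equivalent extreme-direction argument) does the real work.
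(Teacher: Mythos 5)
Your proof is correct in all three parts; note, however, that the paper itself states Lemma \ref{l2.4} without proof, importing it verbatim from the cited reference, so there is no in-paper argument to compare against. Your support-function derivation of $h(A,B)=\|A\ominus B\|$ (including the witness $a^{\ast}$ for the lower bound) and the cancellation-law argument forcing $C+D=\{0\}$ into singletons are both standard and sound; the only step you leave implicit is the bound $\sup_{y\in B}\inf_{x\in A}d(y,x)\leqslant\|C\|$ (take $x=y+c$ for any fixed $c\in C$), which is immediate.
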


\subsection{Set-valued stochastic processes}
In this subsection, we recall the set-valued stochastic processes and give a limit convergence theorem for set-valued processes.

For a sub-$\sigma $-filed $\mathcal{G} \subset \mathcal{F}$, let $\mathbb{L}^0_{\mathcal{G}}(\Omega,\mathcal{K}(\mathbb{R}^n))$ denote the set of all $\mathcal{G}$-measurable random variables valued in $\mathcal{K}(\mathbb{R}^n)$. For any $X\in \mathcal{K}(\mathbb{R}^n)$, let $S_{\mathcal{G}}(X)$ be the set of all $\mathcal{G}$-measurable selection of $X$. Moreover, let $\mathbb{L}^2_{\mathcal{G}}(\Omega,\mathcal{K}(\mathbb{R}^n))$ denote the set of all $\mathcal{G}$-measurable square-integrably bounded random variables valued in $\mathcal{K}(\mathbb{R}^n)$. For simplicity, let $S^2_{\mathcal{G}}(X)=S_{\mathcal{G}}(X) \bigcap  \mathbb{L}^2_{\mathcal{G}}(\Omega,\mathcal{K}(\mathbb{R}^n))$.

\begin{defn} \label{a} \cite{Kisielewicz2013Stochastic}
\begin{itemize}
\item $L^2_{ad}(\Omega ,\mathbb{R}^n)=L^2_{ad}(\Omega,\mathcal{F},\mathbb{P},\mathbb{R}^n)$, the set of all the $n$-dimensional measurable square-integrably random variables,  is a Hilbert space equipped with the norm $\|\cdot\|_a=(E|\cdot|^2)^{\frac{1}{2}}$.

\item $L^2_{ad}([0,T] \times \Omega ,\mathbb{R}^n)=\left\{ f(t,\omega) | f(t,\omega) \;\mbox{is} \; \mathcal{F}_t\mbox{-adapted}\; \mbox{with}  \;\mathbb{E}\left[\int_0^T|f(t,\omega)|^2dt\right]< \infty\right\}$ is a Hilbert space with the norm $\| f(t,\omega) \|_c=\mathbb{E} \left[\int_0^T |f(t,\omega)|^2dt\right]^{\frac{1}{2}}$.
\item A set-valued mapping $F: \Omega \rightarrow  \mathcal{L}(\mathbb{X})$ is called measurable if,  for any closed set $B \subset \mathbb{X}$, it holds that $\left\{ \omega \in \Omega : F(\omega) \cap B \neq \emptyset\right\} \in \mathcal{F}$.

\item A measurable set-valued mapping $X: \Omega \rightarrow \mathcal{L}(\mathbb{R}^n)$ is called a set-valued random variable.

\item A set-valued stochastic process $f=\left\{ f_t \right\}_{t\in [0,T]}$ is a family of set-valued random variables taking values in $\mathcal{L}(\mathbb{R}^n)$.

\item A set-valued stochastic process is called measurable if it is $\mathcal{B}([0,T])\otimes \mathcal{F}$-measurable as a single function on $[0,T]\times \Omega$.

\item $\mathbb{L}^0(\Omega,\mathcal{L}(\mathbb{R}^n))==\mathbb{L}^0(\Omega,\mathcal{F}_T,\mathbb{P},\mathcal{L}(\mathbb{R}^n))$, is the space of all measurable set-valued mappings $F:E\rightarrow \mathcal{L}(\mathbb{R}^n)$ distinguished up to $\mathbb{P}$-almost everywhere equality.

\item $\mathbb{L}^0_t(\Omega,\mathcal{L}(\mathbb{R}^n))$ is the set of all $\mathcal{F}_t$-measurable random variables valued in $\mathcal{L}(\mathbb{R}^n)$ and $\mathbb{L}^p_t(\Omega,\mathcal{L}(\mathbb{R}^n))$ is the set of all $\mathcal{F}_t$-measurable $p$-integrably bounded random variables valued in $\mathcal{L}(\mathbb{R}^n)$ for any given $t\in [0,T]$.

\item $\mathcal{A}^2(\Omega,\mathcal{L}(\mathbb{R}^d)):=\left\{ F\in \mathbb{L}^0(\Omega,\mathcal{L}(\mathbb{R}^n)): S^{*2}(F)\neq \emptyset\right\}$ and
    $$\mathcal{A}_t^2(\Omega,\mathcal{L}(\mathbb{R}^d)):=\left\{ F\in \mathbb{L}_t^0(\Omega,\mathcal{L}(\mathbb{R}^n)): S^{*2}(F)\neq \emptyset\right\}.$$

\item For any $X\in \mathcal{A}^2(\Omega,\mathcal{L}(\mathbb{R}^d))$ and any $t \in [0,T]$, $S^*_t(X)$ is the set of all $\mathcal{F}_t$-measurable selections of $X$.

\item For any $X \in \mathbb{L}^0(\Omega,\mathcal{L}(\mathbb{R}^n))$ the collection of all the selections of $L^2_{ad}(\Omega ,\mathcal{R}^n)$ is denoted by
    $$S^{*2}(X)=\left\{f\in L^2_{ad}(\Omega ,\mathbb{R}^n): f(\omega) \in F(\omega) \quad \mathbb{P}\mbox{-}a.s.\right\}.$$

\item $S^{*2}_T(X)=S^*_T(X)\bigcap L^2_{ad}(\Omega ,\mathbb{R}^n)$.

\item A set-valued random variable $X(\omega)$ is called $p$-integrably bounded if there exists $m \in L^2_{ad}(\Omega, \mathbb{R}^+)$ such that $\| X\| \leqslant m(\omega)$ $\mathbb{P}$-a.s.

\item $L^p(E,\mathcal{L}(\mathbb{R}^n))=\left\{F \in L^0(E,\mathcal{L}(\mathbb{R}^n)): \|F\| \leqslant m(\omega)  \quad \mathbb{P}\mbox{-}a.s.\right\}$, where $m(\omega) \in L^p_{ad}(\Omega , \mathbb{R}^+)$.

\item A set-valued stochastic process $F$ is called $p$-integrably bounded if there exists $m \in L^2_{ad}([0,T] \times \Omega, \mathbb{R}^+)$ such that $\| F\| \leqslant m(t,\omega)$ $\mathbb{P}$-a.s..

\item Denote the subtrajectory integrals of a set-valued stochastic process $F:T\times \Omega \rightarrow \mathcal{L}(\mathbb{R}^n)$ by $S(F)$, which is the set of all measurable and $dt \times P$-integrable selectors of $F$.

\item Denote the subset
$$S^2(F):=S(F) \bigcap L^2_{ad}([0,T] \times \Omega ,\mathcal{R}^n)$$
and
$$S^2_{\mathbb{F}}(F)=\left\{ f \in S^2(F): f \; is \; \mathbb{F}\mbox{-}measurable \right\}.$$

\end{itemize}
\end{defn}

\begin{defn} \cite{Kisielewicz2013Stochastic}
A set-valued process is called $\mathbb{F}$-nonanticipative if it is $\Sigma_{\mathbb{F}}$-measurable, where $$\Sigma_{\mathbb{F}}=\left\{A\in \beta_T \otimes \mathcal{F}:A_t \in \mathcal{F}_t \quad \forall t\in T\right\},$$
and $A_t$ denotes the $t$-section of a set $A \subset T\times \Omega$, i.e., $A_t=\{\omega\in \beta_T: A(t,\omega)\in \beta_T \otimes \mathcal{F}\}$. The set of all set-valued $\mathbb{F}$-nonanticipative measurable square integrably bounded stochastic processes taking values in $\mathcal{K}(\mathbb{R})$ with $S^2_{\mathbb{F}}(\cdot) \neq \emptyset$  is denoted by $\mathcal{L}^2_{ad}([0,T]\times \Omega,\mathcal{K}(\mathbb{R}^n))$.

\end{defn}

\begin{lm}\cite{Kisielewicz2013Stochastic}\label{l2.5}
Let $\Phi_t,\,\Psi_t \in \mathcal{L}^2_{ad}([0,T]\times \Omega,\mathcal{L}(\mathbb{R}^n))$.  Then $\mathcal{L}^2_{ad}([0,T]\times \Omega,\mathcal{L}(\mathbb{R}^n))$ is a complete metric space with the metric $d_H(\Phi_t,\Psi_t)=[\mathbb{E}\int_0^Th^2(\Phi_t,\Psi_t)dt]^{\frac{1}{2}}$. Specially,  $\mathcal{L}^2_{ad}([0,T]\times \Omega,\mathcal{K}(\mathbb{R}^n))$ is a Banach space with the norm $\|Z(t)(\omega)\|_s=[\mathbb{E}\int_0^T\|Z\|dt]^{\frac{1}{2}}$ for $Z(t)\in \mathcal{L}^2_{ad}([0,T]\times \Omega,\mathcal{K}(\mathbb{R}^n))$.
\end{lm}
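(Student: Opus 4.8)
The plan is to view $d_H$ as an $L^2$-type metric on functions valued in a Hausdorff space of sets and to deduce completeness from the completeness of the underlying set-space. First I would verify that $d_H$ is genuinely a metric. Symmetry and nonnegativity are inherited from the Hausdorff distance $h$, while finiteness is not automatic, since $h$ can take the value $+\infty$ on unbounded closed sets; here it is rescued by the square-integrable boundedness, because $\|\Phi_t\|\le m_{\Phi}(t,\omega)$ forces $\Phi_t(\omega)$ to be a bounded, hence compact, subset of $\mathbb{R}^n$ for a.e.\ $(t,\omega)$, so that $h(\Phi_t,\Psi_t)\le\|\Phi_t\|+\|\Psi_t\|\in L^2([0,T]\times\Omega)$. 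The triangle inequality then follows by combining the pointwise estimate $h(\Phi_t,\Psi_t)\le h(\Phi_t,\Theta_t)+h(\Theta_t,\Psi_t)$ with the Minkowski inequality in $L^2([0,T]\times\Omega)$, and definiteness holds modulo the standing identification of processes that agree $dt\times\mathbb{P}$-a.e.

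The heart of the argument is completeness. The key observation is that, by integrable boundedness, each element is $dt\times\mathbb{P}$-a.e.\ valued in the space of nonempty compact subsets of $\mathbb{R}^n$, which is complete for the metric $h$. I would then run a Riesz--Fischer argument: given a $d_H$-Cauchy sequence $\{\Phi^{(k)}\}$, pass to a subsequence (still written $\Phi^{(k)}$) with $\sum_k d_H(\Phi^{(k)},\Phi^{(k+1)})<\infty$. By Tonelli's theorem the nonnegative series $\sum_k h(\Phi^{(k)}_t(\omega),\Phi^{(k+1)}_t(\omega))$ has finite $L^2([0,T]\times\Omega)$-norm, hence converges for a.e.\ $(t,\omega)$; consequently $\{\Phi^{(k)}_t(\omega)\}$ is $h$-Cauchy and converges in Hausdorff distance to some compact limit set $\Phi_t(\omega)$.

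It remains to show that $\Phi$ belongs to $\mathcal{L}^2_{ad}([0,T]\times\Omega,\mathcal{L}(\mathbb{R}^n))$ and that $d_H(\Phi^{(k)},\Phi)\to 0$. A pointwise $h$-limit of measurable (respectively $\mathbb{F}$-nonanticipative) compact-set-valued maps is again measurable (respectively nonanticipative), which one obtains from a Castaing-type representation together with convergence of the scalar selections; square-integrable boundedness of $\Phi$ follows from $\|\Phi^{(k)}\|\to\|\Phi\|$ pointwise combined with Fatou's lemma and the uniform $L^2$-bound. Convergence $d_H(\Phi^{(k)},\Phi)\to 0$ along the subsequence is then obtained by dominated convergence, with dominating function supplied by the summable tail $\sum_{j\ge k}h(\Phi^{(j)}_t,\Phi^{(j+1)}_t)$; since the original sequence was Cauchy, convergence of a subsequence upgrades to convergence of the whole sequence. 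I expect this last verification---that the limit remains in the space, i.e.\ the measurability and nonanticipativity of $\Phi$ and the survival of the $L^2$-bound---to be the main obstacle, the metric-axiom checks being routine.

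For the second assertion I would specialize to $\mathcal{K}(\mathbb{R}^n)$. There $\|\cdot\|$ is a norm by Lemma~\ref{l2.4}(i), and Minkowski addition together with scalar multiplication embeds $\mathcal{K}(\mathbb{R}^n)$ isometrically as a convex cone into a Banach space via the support-function (R\aa dstr\"om) embedding, under which $\|\cdot\|_s$ is the $L^2([0,T]\times\Omega)$-norm of the scalar process $t\mapsto\|Z_t\|$; in particular $\|\cdot\|_s$ is a genuine norm. Because Hausdorff limits of compact convex sets are again compact and convex, $\mathcal{K}(\mathbb{R}^n)$ is $h$-closed in $\mathcal{L}(\mathbb{R}^n)$, so $\mathcal{L}^2_{ad}([0,T]\times\Omega,\mathcal{K}(\mathbb{R}^n))$ is a closed subset of the complete space furnished by the first part and is therefore itself complete; being complete and normed, it is a Banach space.
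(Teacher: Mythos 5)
The paper offers no proof of this lemma: it is imported verbatim from Kisielewicz's monograph \cite{Kisielewicz2013Stochastic}, so there is no in-paper argument to compare yours against. Judged on its own terms, your proof is the standard Riesz--Fischer argument and is essentially correct: the metric axioms (with finiteness rescued by square-integrable boundedness, since $h(A,B)\leqslant\|A\|+\|B\|$), the passage to a subsequence with summable $d_H$-increments, the resulting $dt\times\mathbb{P}$-a.e.\ $h$-Cauchy property, completeness of the nonempty compact subsets of $\mathbb{R}^n$ under $h$, and the dominated-convergence upgrade back to $d_H$-convergence all go through as you describe; closedness of $\mathcal{K}(\mathbb{R}^n)$ under Hausdorff limits then handles the convex case. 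Two points deserve explicit attention. First, membership of the limit $\Phi$ in $\mathcal{L}^2_{ad}$ as the paper defines it also requires $S^2_{\mathbb{F}}(\Phi)\neq\emptyset$; this does follow, via the Kuratowski--Ryll-Nardzewski selection theorem applied to the $\Sigma_{\mathbb{F}}$-measurable map $\Phi$ together with the square-integrable bound, but your sketch never mentions it, and it is part of what must be checked for the limit to ``remain in the space.'' Second, the ``Banach space'' phrasing is an abuse of language---processes valued in $\mathcal{K}(\mathbb{R}^n)$ form a convex cone, not a vector space---and your appeal to the R{\aa}dstr\"om (support-function) embedding is exactly the right device to make the normed-space statement honest; note also that the displayed norm in the statement has $\|Z\|$ rather than $\|Z\|^2$ under the integral, an evident typo that your reading silently and correctly repairs.
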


\begin{lm}(Kuratowski and Ryll-Nardzewski)\cite{KR65}
Assume that $(E,\mathcal{E})$ is a $Polish$ space and $(T,\mathcal{F})$ is a measurable space. If $F: T\rightarrow \mathcal{L}(E)$ is measurable, then $F$ admits a measurable selector.
\end{lm}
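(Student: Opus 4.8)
The plan is to prove this classical Kuratowski--Ryll-Nardzewski selection theorem by the method of successive approximation, building a uniformly Cauchy sequence of measurable countably-valued maps whose pointwise limit is the desired selector. Throughout I write $d$ for the metric on the Polish space $E$ and fix a countable dense subset $\{x_1,x_2,\dots\}\subset E$. First I would convert the hypothesis into a single analytic fact: by the measurability of $F$, for every closed ball $\bar B(x,r)$ the set $\{t:F(t)\cap\bar B(x,r)\neq\emptyset\}=\{t:d(x,F(t))\le r\}$ lies in $\mathcal{F}$, so for each fixed $x\in E$ the map $t\mapsto d(x,F(t))$ is $\mathcal{F}$-measurable and hence all of its sublevel sets $\{t:d(x,F(t))<r\}$ and $\{t:d(x,F(t))\le r\}$ are measurable. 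This is the only consequence of the hypothesis I will use.

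Next comes the inductive construction, which is the heart of the argument. I would produce measurable maps $f_n:T\to\{x_1,x_2,\dots\}$ such that for every $t$,
\[
d(f_n(t),F(t))<2^{-n}
\quad\text{and}\quad
d(f_{n+1}(t),f_n(t))<2^{-n}+2^{-(n+1)}.
\]
For the base step set $T_k=\{t:d(x_k,F(t))<1\}$, which is measurable and, since $\{x_k\}$ is dense and each $F(t)$ is nonempty, covers $T$; disjointifying via $A_1=T_1$, $A_k=T_k\setminus\bigcup_{j<k}T_j$ and putting $f_0\equiv x_k$ on $A_k$ gives a measurable $f_0$ with $d(f_0(t),F(t))<1$. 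For the inductive step I work separately on each of the measurable partitioning sets $\{f_n=x_j\}$: there $d(x_j,F(t))<2^{-n}$, so one may pick $y\in F(t)$ with $d(x_j,y)<2^{-n}$ and then a dense point $x_k$ with $d(x_k,y)<2^{-(n+1)}$, yielding $d(x_k,F(t))<2^{-(n+1)}$ and $d(x_k,x_j)<2^{-n}+2^{-(n+1)}$. Thus on $\{f_n=x_j\}$ the admissible indices are the fixed countable set $K_j=\{k:d(x_k,x_j)<2^{-n}+2^{-(n+1)}\}$, and for each $k\in K_j$ the set $\{t\in\{f_n=x_j\}:d(x_k,F(t))<2^{-(n+1)}\}$ is measurable; these sets cover $\{f_n=x_j\}$, so a least-index disjointification defines $f_{n+1}$ measurably on that piece. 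Assembling over all $j$ produces the required $f_{n+1}$.

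Finally, since $d(f_{n+1}(t),f_n(t))<2^{-n}+2^{-(n+1)}<2^{-n+1}$ is summable in $n$, the sequence $(f_n(t))$ is Cauchy uniformly in $t$; by completeness of $E$ it converges to some $f(t)$, and $f$ is measurable as a pointwise limit of measurable $E$-valued maps. Letting $n\to\infty$ in $d(f_n(t),F(t))<2^{-n}$ gives $d(f(t),F(t))=0$, and since $F(t)$ is closed this forces $f(t)\in F(t)$, so $f$ is a measurable selector.

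I expect the main obstacle to be precisely the inductive step, where measurability must be preserved while two constraints --- proximity to the closed value $F(t)$ and proximity to the previous approximation $f_n(t)$ --- are enforced at once; the device that resolves this is to restrict to the level sets $\{f_n=x_j\}$, where the previous value is constant, and then to disjointify by least index the countably many admissible balls, so that each approximation remains both close to $F$ and close to its predecessor. The reliance on separability (for the countable dense set and the countable partitions) and on completeness (for the Cauchy limit) is exactly where the Polish assumption on $E$ enters.
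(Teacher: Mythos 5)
The paper does not prove this lemma at all: it is quoted as the classical Kuratowski--Ryll-Nardzewski selection theorem with a citation to \cite{KR65} and is used purely as a black box. Your successive-approximation argument is the standard proof of that theorem and is correct: the measurable countable partition $\{f_n=x_j\}$, the least-index disjointification of the admissible balls, the summable increments giving a uniformly Cauchy sequence, and the closedness of each value $F(t)$ at the end are exactly the ingredients of the original argument, and the Polish hypothesis enters precisely where you say it does. One small imprecision is worth repairing: the identity $\{t:F(t)\cap\bar B(x,r)\neq\emptyset\}=\{t:d(x,F(t))\le r\}$ can fail for closed but non-compact values, since the infimum $d(x,F(t))$ need not be attained, so only the inclusion $\subseteq$ is automatic. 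This is harmless for your proof, because the only fact you use is measurability of the strict sublevel sets, and
$\{t:d(x,F(t))<r\}=\bigcup_{n\ge 1}\{t:F(t)\cap\bar B(x,r-1/n)\neq\emptyset\}$
is measurable directly from the hypothesis (closed balls being closed sets); with that substitution the measurability of $t\mapsto d(x,F(t))$ and the rest of the construction go through unchanged.
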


The set-valued conditional expectation and the set-valued martingale can be defined as follows.

\begin{defn}\cite{Kisielewicz2013Stochastic}
For a sub-$\sigma $-field $\mathcal{G} \subset \mathcal{F}$ and $X\in \mathbb{L}^2(\Omega,\mathcal{K}(\mathbb{R}^n))$, the conditional expectation of $X$ with respect $\mathcal{G}$ is defined as the unique set-valued random variable $\mathbb{E}[X|\mathcal{G}] \in \mathbb{L}^2_{\mathcal{G}}(\Omega,\mathcal{K}(\mathbb{R}^n))$ such that
$$\int_{G}\mathbb{E}[X|\mathcal{G}]d \mathbb{P}=\int_{G}Xd \mathbb{P},\qquad \forall G\in \mathcal{G}.$$
\end{defn}

\begin{defn}\cite{Kisielewicz2014Martingale}\label{d25}
A set-valued process $M=\left\{M_t\right\}_{t\in [0,T]}$ is said to be a set-valued square-integrable $\mathcal{F}$-martingale if
\begin{enumerate}[($\romannumeral1$)]
\item $M\in \mathcal{L}^2_{ad}([0,T]\times \Omega,\mathcal{L}(\mathbb{R}^n))$;
\item $M_t\in \mathcal{A}^2_{t}(\Omega,\mathcal{L}(\mathbb{R}^n))$;
\item $M_s=\mathbb{E}[M_t|\mathcal{F}_s]$ for all $0\leqslant s \leqslant t$.
\end{enumerate}
\end{defn}

\begin{lm}\cite{Ararat2020Set-valued}
Let $F_1,F_2\in \mathcal{L}^2_{ad}([0,T]\times \Omega,\mathcal{K}(\mathbb{R}^n))$. Then, $F_1+F_2\in \mathcal{L}^2_{ad}([0,T]\times \Omega,\mathcal{K}(\mathbb{R}^n))$ and
$$
S_{\mathbb{F}}^2(F_1+F_2)=S_{\mathbb{F}}^2(F_1)+S_{\mathbb{F}}^2(F_2).
$$
Furthermore, if $F_1 \ominus F_2$ exists, then $F_1 \ominus F_2 \in \mathcal{L}^2_{ad}([0,T]\times \Omega,\mathcal{K}(\mathbb{R}^n))$ and
$$
S_{\mathbb{F}}^2(F_1\ominus F_2)=S_{\mathbb{F}}^2(F_1) \ominus S^2_{\mathbb{F}}(F_2).
$$
\end{lm}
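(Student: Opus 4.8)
The plan is to prove the four assertions in sequence, first settling the behaviour of the Minkowski sum and then deducing the two Hukuhara-difference statements from it, using a measurable selection theorem as the main engine.

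First I would check that $F_1+F_2$ lies in $\mathcal{L}^2_{ad}([0,T]\times\Omega,\mathcal{K}(\mathbb{R}^n))$ by verifying the three defining requirements. The pointwise Minkowski sum of two compact convex sets is again compact and convex, so $F_1+F_2$ takes values in $\mathcal{K}(\mathbb{R}^n)$. Nonanticipative measurability is inherited: representing $F_i$ through countable families of $\Sigma_{\mathbb{F}}$-measurable selectors $\{f_i^k\}_k$ whose closure recovers $F_i$, the countable family $\{f_1^j+f_2^k\}_{j,k}$ does the same job for $F_1+F_2$, which is therefore $\Sigma_{\mathbb{F}}$-measurable. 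Finally, since $\|\cdot\|$ is a norm by Lemma \ref{l2.4}(i), we have $\|F_1+F_2\|\leqslant\|F_1\|+\|F_2\|\leqslant m_1+m_2$ with $m_1,m_2\in L^2_{ad}([0,T]\times\Omega,\mathbb{R}^+)$, so $F_1+F_2$ is square-integrably bounded.

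For the identity $S^2_{\mathbb{F}}(F_1+F_2)=S^2_{\mathbb{F}}(F_1)+S^2_{\mathbb{F}}(F_2)$, the inclusion ``$\supseteq$'' is immediate: if $f_i\in S^2_{\mathbb{F}}(F_i)$, then $f_1+f_2$ is $\mathbb{F}$-nonanticipative, square-integrable, and satisfies $f_1(t,\omega)+f_2(t,\omega)\in F_1(t,\omega)+F_2(t,\omega)$ a.e. The reverse inclusion is the crux. Given $g\in S^2_{\mathbb{F}}(F_1+F_2)$, I would introduce the set-valued map
$$
\Gamma(t,\omega):=F_1(t,\omega)\cap\bigl(g(t,\omega)-F_2(t,\omega)\bigr),
$$
whose values are compact and nonempty (because $g(t,\omega)\in F_1(t,\omega)+F_2(t,\omega)$), and which is $\Sigma_{\mathbb{F}}$-measurable as the intersection of the measurable maps $F_1$ and $g-F_2$. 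Applying the Kuratowski--Ryll-Nardzewski theorem to $\Gamma$ yields an $\mathbb{F}$-nonanticipative selector $f_1$; since $|f_1|\leqslant\|F_1\|\leqslant m_1$, it lies in $S^2_{\mathbb{F}}(F_1)$, and $f_2:=g-f_1$ then satisfies $f_2(t,\omega)\in F_2(t,\omega)$ with $|f_2|\leqslant\|F_2\|\leqslant m_2$, so $f_2\in S^2_{\mathbb{F}}(F_2)$ and $g=f_1+f_2$.

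Assuming now that $F_1\ominus F_2$ exists, I would set $C:=F_1\ominus F_2$, so that $F_1=F_2+C$ with $C$ valued in $\mathcal{K}(\mathbb{R}^n)$. By Lemma \ref{l2.4}(ii), $\|C\|=\|F_1\ominus F_2\|=h(F_1,F_2)\leqslant\|F_1\|+\|F_2\|\leqslant m_1+m_2$, which gives square-integrable boundedness. For measurability I would invoke the representation of the Hukuhara difference as the geometric (Minkowski) difference $C(t,\omega)=\bigcap_{b\in F_2(t,\omega)}\bigl(F_1(t,\omega)-b\bigr)$, valid precisely when the Hukuhara difference exists; equivalently, the support function of $C$ is the measurable map $s_{F_1}-s_{F_2}$, from which the $\Sigma_{\mathbb{F}}$-measurability of $C$ follows, whence $C\in\mathcal{L}^2_{ad}([0,T]\times\Omega,\mathcal{K}(\mathbb{R}^n))$. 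The final identity is then immediate: applying the already-proved additive identity to $F_1=F_2+C$ gives $S^2_{\mathbb{F}}(F_1)=S^2_{\mathbb{F}}(F_2)+S^2_{\mathbb{F}}(C)$, which by the definition of the Hukuhara difference on the convex selection sets means exactly $S^2_{\mathbb{F}}(C)=S^2_{\mathbb{F}}(F_1)\ominus S^2_{\mathbb{F}}(F_2)$, and since $C=F_1\ominus F_2$ the claim follows. The main obstacle is the reverse inclusion in the additive identity, together with the measurability of $C$: both reduce to verifying that the set-valued maps $\Gamma$ and the geometric difference have closed nonempty values and are $\Sigma_{\mathbb{F}}$-measurable, so that the Kuratowski--Ryll-Nardzewski theorem applies, after which square-integrability and nonanticipativity of the selectors are routine consequences of the uniform bounds $m_1,m_2$.
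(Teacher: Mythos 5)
This lemma is imported verbatim from Ararat, Ma and Wu \cite{Ararat2020Set-valued}; the present paper states it without proof, so there is no internal argument to compare yours against. Judged on its own terms, your reconstruction is the standard selection-theoretic proof and is essentially sound: the inclusion $S^2_{\mathbb{F}}(F_1)+S^2_{\mathbb{F}}(F_2)\subseteq S^2_{\mathbb{F}}(F_1+F_2)$ is indeed trivial, the reverse inclusion via the multifunction $\Gamma=F_1\cap(g-F_2)$ and the Kuratowski--Ryll-Nardzewski theorem is the right engine, and reducing the Hukuhara statement to the additive one through $F_1=F_2+(F_1\ominus F_2)$ is exactly how the difference identity should be obtained (the representation $F_1\ominus F_2=\{x:x+F_2\subseteq F_1\}$ you invoke is Lemma \ref{hd}, and the bound $\|F_1\ominus F_2\|=h(F_1,F_2)\leqslant\|F_1\|+\|F_2\|$ follows from Lemma \ref{l2.4}). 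Two technical points deserve to be made explicit rather than glossed over. First, the measurability of the intersection $\Gamma$ of two closed-valued measurable multifunctions is not automatic for an arbitrary $\sigma$-field; one needs either completeness of the relevant $\sigma$-field or the compactness of the values together with a representation of $\Gamma$ through distance functions, and since the selector must be $\Sigma_{\mathbb{F}}$-measurable (nonanticipative), not merely $\mathcal{B}([0,T])\otimes\mathcal{F}$-measurable, the selection theorem has to be applied on $([0,T]\times\Omega,\Sigma_{\mathbb{F}})$; this is where the usual-conditions hypothesis is actually used. Second, in the last step the identity $S^2_{\mathbb{F}}(F_1)=S^2_{\mathbb{F}}(F_2)+S^2_{\mathbb{F}}(C)$ only pins down $S^2_{\mathbb{F}}(C)$ as \emph{the} Hukuhara difference after noting that the sets $S^2_{\mathbb{F}}(\cdot)$ are closed, bounded and convex in $L^2$, so that the R{\aa}dstr\"{o}m cancellation law applies there. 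Neither point is a genuine gap in the sense of an unfixable error, but a complete write-up should address both.
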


\begin{lm}\label{t32}
Suppose that $\left\{X_p(t,\omega)\right\}_{p=1}^{\infty} \subset \mathcal{L}^2_{ad}([0,T]\times \Omega,\mathcal{K}(\mathbb{R}^n))$ is a sequence of set-valued stochastic processes satisfying
\begin{enumerate}[($\romannumeral1$)]
\item $X_q \ominus X_p$ exists for  $q \geqslant p$;
\item For each $\varepsilon >0$, there exists an $N\in \mathbb{N}$ such that $\|X_q\ominus X_p\|_s <\varepsilon$ when $p,q \geqslant N$.
\end{enumerate}
Then there exists a unique $X(t,\omega) \in \mathcal{L}^2_{ad}([0,T]\times \Omega,\mathcal{K}(\mathbb{R}^n))$ such that $\|X_p\ominus X\|_s \rightarrow 0$ as $p\rightarrow +\infty$.
\end{lm}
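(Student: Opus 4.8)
The plan is to read condition (ii) as a Cauchy condition in the complete metric space of Lemma~\ref{l2.5} and then promote the resulting metric limit to a Hukuhara limit. First I would translate the hypotheses into the metric $d_H$: for $q\geq p$ the difference $X_q\ominus X_p$ exists, so Lemma~\ref{l2.4}(ii) gives $h(X_q,X_p)=\|X_q\ominus X_p\|$ pointwise in $(t,\omega)$, whence $d_H(X_q,X_p)=\|X_q\ominus X_p\|_s$. Since $h$, and therefore $d_H$, is symmetric, condition (ii) says exactly that $\{X_p\}$ is $d_H$-Cauchy, and the completeness asserted in Lemma~\ref{l2.5} produces a limit $X\in\mathcal{L}^2_{ad}([0,T]\times\Omega,\mathcal{K}(\mathbb{R}^n))$ with $d_H(X_p,X)\to0$.

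The crux is to show that this $d_H$-limit respects the Hukuhara structure, i.e. that $X\ominus X_p$ exists for each $p$. Fixing $p$, I would study the sequence $Y_q:=X_q\ominus X_p$ for $q\geq p$, each term lying in the space by the preceding lemma on the membership of $F_1\ominus F_2$. For $q'\geq q\geq p$, Lemma~\ref{l2.2}(ii) applied to $(X_{q'},X_q,X_p)$ gives $X_{q'}\ominus X_p=(X_{q'}\ominus X_q)+(X_q\ominus X_p)$, so by the cancellation law $Y_{q'}\ominus Y_q=X_{q'}\ominus X_q$ and hence $\|Y_{q'}\ominus Y_q\|_s=\|X_{q'}\ominus X_q\|_s$. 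Thus $\{Y_q\}_q$ is itself Cauchy and, by Lemma~\ref{l2.5}, converges in $d_H$ to some $D_p$ in the space. Writing $X_q=X_p+Y_q$ via Lemma~\ref{12.1}(v) and using the continuity of Minkowski addition under $h$ (which follows from $h(X_p+Y_q,X_p+D_p)\leq h(Y_q,D_p)$, so that $X_p+Y_q\to X_p+D_p$ in $d_H$), uniqueness of $d_H$-limits forces $X=X_p+D_p$, that is $X\ominus X_p=D_p$. Lemma~\ref{l2.4}(ii) then gives $\|X\ominus X_p\|_s=d_H(X,X_p)\to0$, the asserted convergence (the stated $\|X_p\ominus X\|_s$ being read through the symmetry of $h$ in Lemma~\ref{l2.4}(ii)).

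Uniqueness is immediate from the triangle inequality for $d_H$: if $X'$ is another such limit, then $d_H(X,X')\leq d_H(X,X_p)+d_H(X_p,X')\to0$, so $X=X'$ $\mathbb{P}$-a.e. The step I expect to be the main obstacle is the passage to the Hukuhara limit in the second paragraph, since $d_H$-convergence by itself controls only Hausdorff distances and says nothing a priori about the existence of differences. The mechanism that rescues this is hypothesis (i): through Lemma~\ref{l2.2}(ii) it makes $\{X_q\ominus X_p\}_{q\geq p}$ monotone in the Minkowski sense, so that its $d_H$-limit $D_p$ reassembles with $X_p$ to reproduce $X$, and continuity of set addition under $h$ is precisely what licenses taking the limit inside the sum. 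I would also flag the directionality issue: the construction naturally yields $X\ominus X_p$ rather than $X_p\ominus X$, the two being reconciled only by the symmetry of $h$ in Lemma~\ref{l2.4}(ii).
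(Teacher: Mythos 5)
Your proof is correct and follows the same skeleton as the paper's: read hypothesis (ii) through Lemma \ref{l2.4}(ii) as a Cauchy condition for the metric $d_H$ of Lemma \ref{l2.5}, invoke completeness to produce the limit $X$, and handle uniqueness via the triangle inequality applied to $\|X_1\ominus X_p\|_s+\|X_p\ominus X_2\|_s$ --- this last step is essentially identical to the paper's. Where you genuinely go beyond the paper is your second paragraph. The paper's own argument asserts, with no real justification, that hypotheses (i)--(ii) force $X_q\ominus X_p$ to be a singleton $\left\{\xi\right\}$ with $|\xi|<\varepsilon$ (which does not follow: the Hukuhara difference of two convex compact sets at small Hausdorff distance is merely a set of small norm), and it then writes $\|X^*\ominus X_p\|_s<\varepsilon$ for the metric limit $X^*$ without ever establishing that $X^*\ominus X_p$ exists. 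Your construction of $D_p$ as the $d_H$-limit of $Y_q=X_q\ominus X_p$, the identification $Y_{q'}\ominus Y_q=X_{q'}\ominus X_q$ via Lemma \ref{l2.2}(ii) and cancellation, and the passage to the limit in $X_q=X_p+Y_q$ using $h(X_p+Y_q,X_p+D_p)\leqslant h(Y_q,D_p)$ together supply exactly the existence argument the paper omits; your remark on the directionality of $X\ominus X_p$ versus $X_p\ominus X$ is likewise a fair criticism of the statement as written. In short, yours is not a different route but a repaired and complete version of the paper's proof.
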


\begin{proof}
It follows from the assumption (i) that the potential of the $X_p$ is increasing as $p$ becoming larger. Thus, the assumption (ii) shows that, for each $\varepsilon >0$, there exist an $N'\in \mathbb{N}$ and a $\xi\in \mathbb{R}^p$ such that $X_q\ominus X_p={\xi}$ with $|\xi|<\varepsilon$ when $n,m \geqslant N'$. Since $X_q \ominus X_p$ exists, we have
$$\|X_q \ominus X_p\|_s=\left[E\int_0^T\|X_q \ominus X_p\|^2ds\right]^{\frac{1}{2}}=\left[E\int_0^Th^2(X_q, X_p)ds\right]^{\frac{1}{2}}$$
when $p,q \geqslant N'$. By Lemma \ref{l2.5} and the fact $\left\{X_p(t,\omega)\right\}_{p=1}^{\infty}$ in $\mathcal{L}^2_{ad}([0,T]\times \Omega,\mathcal{K}(\mathbb{R}^n))$, there exists $X^*\in \mathcal{L}^2_{ad}([0,T]\times \Omega,\mathcal{K}(\mathbb{R}^n))$ such that $\|X^*\ominus X_p\|_s <\varepsilon$ for all $t\in [0,T]$ and a.s $\omega\in \Omega$ when $p\geqslant N'$.

Next we show the uniqueness. If $X_1,X_2 \in \mathcal{L}^2_{ad}([0,T]\times \Omega,\mathcal{K}(\mathbb{R}^n))$ such that $\|X_p\ominus X_1\|_s \rightarrow 0$ and $\|X_p\ominus X_2\|_s \rightarrow 0$ with $p\rightarrow +\infty$, then it follows from Lemma \ref{l2.2} that
\begin{align*}
 \|X_1\ominus X_2\|_s&=\|X_1\ominus X_p + X_p\ominus X_2\|_s \\
 &\leqslant \|X_p\ominus X_1\|_s +\|X_p\ominus X_2\|_s \rightarrow 0
\end{align*}
as $p\rightarrow +\infty$. This ends the proof.
\end{proof}

\subsection{Set-valued stochastic integrals}
In this subsection, we recall the the notions of set-valued stochastic integrals.

\begin{defn} (\cite{Ararat2020Set-valued,Kisielewicz2013Stochastic})
\begin{itemize}
\item For $F\in \mathcal{L}^2_{ad}([0,T]\times \Omega,\mathcal{K}(\mathbb{R}^n))$, \\
$\mathcal{M}_{\mathcal{F}}([0,T]\times \Omega,\mathcal{K}(\mathbb{R}^n)):=\left\{ F \in \mathcal{L}^2_{ad}([0,T]\times \Omega,\mathcal{K}(\mathbb{R}^n)):S^2_{\mathbb{F}}(F) \neq \emptyset \right\}$.

\item A set $V\subset L^2_{ad}(\Omega ,\mathcal{R}^n)$ is said to be decomposable with respect to $\mathcal{F}$ if for any $f_1,f_2 \in V$ and $D\in \mathcal{F}$, it holds that $1_Df_1+1_{D^c}f_2 \in V$.

\item Denote the decomposable hull of $V\subset L^2_{ad}(\Omega ,\mathcal{R}^n)$ by $dec(V)$, which is the smallest decomposable subset of $L^2_{ad}(\Omega ,\mathcal{R}^n)$ contain $V$. Denote the closure of $dec(V)$ in $L^2_{ad}(\Omega ,\mathcal{R}^n)$  by $\overline{dec}(V)$.

\item $\mathcal{K}_w(\mathcal{L}^2_{ad}([0,T] \times \Omega ,\mathcal{R}^{n\times m})$ is the set of weakly compact convex subset of $\mathcal{L}^2_{ad}([0,T] \times \Omega ,\mathcal{R}^{n\times m})$.
\end{itemize}
\end{defn}

\begin{lm}\cite{Revuz2008Continuous} \label{itoi}
Suppose $f\in L^2_{ad}([0,T] \times \Omega ,\mathbb{R}^m)$. Then the It\^{o} integral $I(f)=\int_0^T f(t)dW(t)$ is a random variable with $\mathbb{E}[I(f)]=0$ and
$$\mathbb{E}\left[I(t)\right]^2= \mathbb{E}\left[\int_0^Tf^2(t)dt\right].$$
\end{lm}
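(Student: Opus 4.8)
The plan is to prove the two assertions by the classical two-stage route: first establish them for elementary (simple) integrands where the integral is defined explicitly, and then extend to general $f$ by an $L^2$-density and isometry argument. Throughout I rely only on the independence and mean-zero properties of Brownian increments together with the adaptedness of $f$; no earlier lemma of the paper is needed, as this is the foundational It\^o isometry.

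First I would fix the class of simple processes, namely those of the form $f(t,\omega)=\sum_{i=0}^{k-1}\xi_i(\omega)\,\mathbf{1}_{(t_i,t_{i+1}]}(t)$ for a partition $0=t_0<\cdots<t_k=T$, where each $\xi_i$ is $\mathcal{F}_{t_i}$-measurable and square-integrable. For such $f$ the integral is defined directly as $I(f)=\sum_{i}\xi_i\,(W_{t_{i+1}}-W_{t_i})$. To get $\mathbb{E}[I(f)]=0$ I would condition each summand on $\mathcal{F}_{t_i}$: since $\xi_i$ is $\mathcal{F}_{t_i}$-measurable while the increment $W_{t_{i+1}}-W_{t_i}$ is independent of $\mathcal{F}_{t_i}$ and has mean zero, every term vanishes. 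For the second moment I would expand $I(f)^2$ into diagonal and off-diagonal contributions. For $i<j$, conditioning on $\mathcal{F}_{t_j}$ kills the cross term because $W_{t_{j+1}}-W_{t_j}$ has conditional mean zero given $\mathcal{F}_{t_j}$. For the diagonal terms, independence of $W_{t_{i+1}}-W_{t_i}$ from $\mathcal{F}_{t_i}$ yields $\mathbb{E}[\xi_i^2\,(W_{t_{i+1}}-W_{t_i})^2]=\mathbb{E}[\xi_i^2]\,(t_{i+1}-t_i)$, and summing gives $\mathbb{E}[I(f)^2]=\sum_i\mathbb{E}[\xi_i^2]\,(t_{i+1}-t_i)=\mathbb{E}\big[\int_0^T f^2(t)\,dt\big]=\|f\|_c^2$, which is exactly the isometry for simple $f$.

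Finally I would extend to arbitrary $f\in L^2_{ad}([0,T]\times\Omega,\mathbb{R}^m)$ by density. Choosing simple processes $f_n\to f$ in the $\|\cdot\|_c$ norm, the isometry applied to the differences $f_n-f_m$ shows that $\{I(f_n)\}$ is a Cauchy sequence in $L^2(\Omega)$, so it has a limit which I define to be $I(f)$; independence of this limit from the approximating sequence follows from the same isometry. Passing to the limit in $\mathbb{E}[I(f_n)]=0$ and in $\mathbb{E}[I(f_n)^2]=\|f_n\|_c^2$, using continuity of the expectation and of the norm under $L^2$-convergence, transfers both conclusions from $f_n$ to $f$.

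The hard part will be the density step, i.e.\ approximating a general adapted $L^2$ integrand by simple processes in $\|\cdot\|_c$. Unlike the moment computations, which are direct, one cannot simply freeze an arbitrary progressively measurable process on a grid while keeping it adapted. The standard remedy is a staged reduction: first to bounded integrands by truncation, then to time-continuous integrands by mollification in the time variable, and only then to step processes, verifying $\|\cdot\|_c$-convergence and preservation of adaptedness at each stage. Ensuring the resulting $I(f)$ is well defined is the sole technical delicacy; everything else reduces to the elementary second-moment identity above.
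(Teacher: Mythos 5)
The paper states this lemma without proof, citing Revuz--Yor, so there is no internal argument to compare against; your proposal is the standard and correct construction of the It\^{o} integral (isometry on simple adapted processes via independence and mean-zero of Brownian increments, then extension by $L^2$-density), and it establishes exactly what the lemma asserts. The only point worth making explicit in this paper's setting is that $f$ is $\mathbb{R}^m$-valued and $W$ is $m$-dimensional with $dW_i\,dW_j=0$ for $i\neq j$, so $I(f)=\sum_{j=1}^m\int_0^T f_j\,dW_j$ and $f^2$ must be read as $|f|^2=\sum_j f_j^2$; your componentwise argument carries over verbatim because the cross terms between distinct components also vanish by the orthogonality of the coordinate Brownian motions.
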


\begin{defn} \cite{Ararat2020Set-valued,Kisielewicz2013Stochastic}
\begin{itemize}

\item Denote the $J:\mathcal{L}^2_{ad}([0,T]\times \Omega ,\beta _T \otimes \mathcal{F}_T,\mathbb{R}^n)\rightarrow \mathcal{L}^2_{ad}(\Omega,\mathcal{F}_T,\mathbb{R}^n)$ by
$$J(\phi)(\omega)=\left(\int_0^T \phi(t) dt\right)(\omega) \quad \mathbb{P}\mbox{-}a.s..$$
where  $\phi \in \mathcal{L}^2_{ad}([0,T]\times \Omega,\beta _T \otimes \mathcal{F}_T,\mathbb{R}^n)$, $\beta _T$ and $\mathcal{F}_T$ are the $\sigma$-fields on $T$ and $\Omega$, respectively.

\item Denote the
$\mathcal{J}:\mathcal{L}^2_{ad}([0,T]\times \Omega ,\Sigma_{\mathbb{F}},\mathbb{R}^{n\times m})\rightarrow \mathcal{L}^2_{ad}(\Omega,\mathcal{F}_T,\mathbb{R}^n)$ by
$$\mathcal{J}(\psi)(\omega)=\left(\int_0^T \psi(t) dW_t\right)(\omega) \quad \mathbb{P}\mbox{-}a.s..$$
where $\psi \in \mathcal{L}^2([0,T]\times \Omega,\Sigma_{\mathbb{F}},\mathbb{R}^{n\times m})$.
\item Denote the sets $J[1_{[s,t]}S(F)]$ and $\mathcal{J}[1_{[s,t]}S(F)]$ by $J_{st}[S_{\mathbb{F}}(F)]$ and $\mathcal{J}_{st}[S_{\mathbb{F}}(G)]$, respectively, which are said to be the functional set-valued stochastic integral of $F$ and $G$ on the interval $[s,t]$, respectively.
\end{itemize}
\end{defn}

\begin{defn} \cite{Kisielewicz2013Stochastic}
Let $F\in \mathcal{L}^2_{ad}([0,T]\times \Omega,\mathcal{L}(\mathbb{R}^n))$ and $G\in \mathcal{L}^2_{ad}([0,T]\times \Omega,\mathcal{L}(\mathbb{R}^{n\times m}))$ such that $S_{\mathbb{F}}^2(F)\neq \emptyset $ and $S_{\mathbb{F}}^2(G)\neq \emptyset$. Then there exist unique set-valued random variables $\int^T_0 F(t) dt$ and $\int^T_0 G(t)dW_t$ in $\mathcal{A}_T^2(E,\mathcal{L}(\mathbb{R}^d))$ such that $S^2_T(\int^T_0 F(t) dt)=\overline{dec}(J[S^2_{\mathbb{F}}(F)])$ and $S^2_T(\int^T_0 G(t)dW_t)=\overline{dec}(\mathcal{J}[S^2_{\mathbb{F}}(G)])$. The set-valued random variables $\int^T_0 F(t) dt$ and $\int^T_0 G(t)dW_t$ are, respectively, called the set-valued stochastic integrals of $F(t)$ and $G(t)$ with respect to $t$ and $W_t$ on $[0,T]$. Moreover, for any $t\in [0,T]$, define
$$\int^t_0 F(s) ds:=\int^T_0 1_{(0,t]}(s)F(s) ds, \quad \int^t_0 G(s)dW_s:=\int^T_0 1_{(0,t]}(s)G(s)dW_s$$
and
$$\int_t^T F(s) ds:=\int^T_0 1_{(t,T]}(s)F(s) ds, \quad \int^T_t G(s)dW_s:=\int^T_0 1_{(t,T]}(s)G(s)dW_s.$$
\end{defn}

In the sequel, we always suppose that the set-valued integrals exist.
\begin{lm}\cite{Ararat2020Set-valued}
Let $F_1,F_2 \in \mathcal{L}^2_{ad}([0,T]\times \Omega,\mathcal{K}(\mathbb{R}^n))$ and $G_1,G_2 \in\mathcal{L}^2_{ad}([0,T]\times \Omega,\mathcal{K}(\mathbb{R}^{n\times m}))$. Then, for every $t\in [0,T]$,
$$\int_0^t(F_1(s)+F_2(s))ds=\int_0^t F_1(s)ds+\int_0^t F_2(s)ds$$
and
$$\int_0^t(G_1(s)+G_2(s))dW_s=\int_0^tG_1(s)dW_s+\int_0^tG_2(s)dW_s$$
hold $\mathbb{P}$-a.s.. If $F_1\ominus F_2$ and $G_1\ominus G_2$ exist, then $F_1\ominus F_2\in\mathcal{L}^2_{ad}([0,T]\times \Omega,\mathcal{K}(\mathbb{R}^n))$ and $G_1\ominus G_2\in \mathcal{L}^2_{ad}([0,T]\times \Omega,\mathcal{K}(\mathbb{R}^{n\times m}))$ for every $t\in [0,T]$. Moreover,
$$\int_0^t F_1(s)\ominus F_2(s) ds= \int_0^t F_1(s)ds \ominus \int_0^t F_2(s)ds$$
and
$$\int_0^tG_1(s)\ominus G_2(s)dW_s=\int_0^tG_1(s)dW_s\ominus \int_0^tG_2(s)dW_s$$
hold $\mathbb{P}$-a.s..
\end{lm}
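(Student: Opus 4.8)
The plan is to reduce every equality between set-valued integrals to an equality between their $L^2$-selection sets, since by the defining property each integral is the unique element of $\mathcal{A}^2_T(\Omega,\mathcal{L}(\mathbb{R}^n))$ whose selection set has the prescribed form, and two set-valued random variables in $\mathcal{A}^2_T$ that share the same selection set coincide $\mathbb{P}$-a.s. Replacing the integrands by $1_{(0,t]}F_i$ and $1_{(0,t]}G_i$, it suffices to treat $t=T$. For the additive identities I would compute, using the definition of the set-valued integral and the preceding lemma,
\begin{align*}
S^2_T\left(\int_0^T (F_1+F_2)\,ds\right)&=\overline{dec}\left(J\left[S^2_{\mathbb{F}}(F_1+F_2)\right]\right)\\
&=\overline{dec}\left(J\left[S^2_{\mathbb{F}}(F_1)+S^2_{\mathbb{F}}(F_2)\right]\right),
\end{align*}
and then exploit that the single-valued operator $J$ is linear, so $J[V+W]=J[V]+J[W]$ as Minkowski sums of subsets of $L^2_{ad}(\Omega,\mathbb{R}^n)$. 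The same computation with $\mathcal{J}$ and $S^2_{\mathbb{F}}(G_i)$ handles the stochastic integral.

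The technical heart is the identity $\overline{dec}(V+W)=\overline{dec}(V)+\overline{dec}(W)$ for $V,W\subset L^2_{ad}(\Omega,\mathbb{R}^n)$. I would first prove $dec(V+W)=dec(V)+dec(W)$: the inclusion $\supseteq$ holds because any $f=\sum_i 1_{D_i}a_i\in dec(V)$ and $g=\sum_j 1_{E_j}b_j\in dec(W)$ satisfy $f+g=\sum_{i,j}1_{D_i\cap E_j}(a_i+b_j)\in dec(V+W)$ on the common refinement $\{D_i\cap E_j\}$, while $\subseteq$ holds because $dec(V)+dec(W)$ is decomposable and contains $V+W$. Passing to closures gives $\overline{dec}(V+W)=\overline{\,\overline{dec}(V)+\overline{dec}(W)\,}$, so the only remaining point is to discard the outer closure. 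Here I would invoke the fact that, for $\mathcal{K}(\mathbb{R}^n)$-valued square-integrably bounded integrands, the selection sets $\overline{dec}(J[S^2_{\mathbb{F}}(F_i)])=S^2_T(\int_0^T F_i\,ds)$ are convex and weakly compact in $L^2_{ad}(\Omega,\mathbb{R}^n)$; the Minkowski sum of two weakly compact convex sets is again weakly compact, hence (being convex) strongly closed, so the outer closure is redundant and $S^2_T(\int_0^T(F_1+F_2)ds)=S^2_T(\int_0^T F_1 ds)+S^2_T(\int_0^T F_2 ds)$. By the preceding lemma the right-hand side equals $S^2_T(\int_0^T F_1 ds+\int_0^T F_2 ds)$, which yields the additive identity; the case of $\mathcal{J}$ is identical.

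For the Hukuhara-difference identities, the membership $F_1\ominus F_2\in\mathcal{L}^2_{ad}([0,T]\times\Omega,\mathcal{K}(\mathbb{R}^n))$ (and likewise for $G_1\ominus G_2$) is already furnished by the preceding lemma, so it remains only to identify the integral of the difference. Writing $H:=F_1\ominus F_2$, Definition \ref{d2.1} gives $F_1=F_2+H$, and applying the additive identity just established yields $\int_0^t F_1\,ds=\int_0^t F_2\,ds+\int_0^t H\,ds$. By Definition \ref{d2.1} again this says precisely that $\int_0^t F_1\,ds\ominus\int_0^t F_2\,ds$ exists and equals $\int_0^t H\,ds=\int_0^t(F_1\ominus F_2)\,ds$; the analogous argument with $G_1=G_2+(G_1\ominus G_2)$ and the stochastic integral completes the proof. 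The one genuinely delicate step is the removal of the outer closure in the second paragraph: without the weak compactness of the selection sets, the Minkowski sum of the two closed decomposable hulls could fail to be closed, so this is where the compact-convex structure of the integrands is essential.
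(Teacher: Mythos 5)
The paper states this lemma without proof, citing Ararat--Ma--Wu, so there is no in-paper argument to compare against; your reconstruction follows the standard route of that source (reduce to $t=T$, pass to the selection sets $\overline{dec}(J[S^2_{\mathbb{F}}(\cdot)])$, use linearity of $J$ and $\mathcal{J}$ together with $S^2_{\mathbb{F}}(F_1+F_2)=S^2_{\mathbb{F}}(F_1)+S^2_{\mathbb{F}}(F_2)$, and derive the Hukuhara identities from the additive ones via $F_1=F_2+(F_1\ominus F_2)$ and the cancellation law). The argument is correct, and you rightly isolate the one delicate point, the redundancy of the outer closure in $\overline{dec}(V)+\overline{dec}(W)$; your appeal to weak compactness works, but note that it requires checking that $\overline{dec}(J[S^2_{\mathbb{F}}(F_i)])$ is bounded and convex in $L^2$, which follows from the pointwise domination $|J(\phi)(\omega)|\leqslant\int_0^T m(t,\omega)\,dt$ for every selection (so the decomposable hull inherits the same square-integrable bound) and from the fact that the decomposable hull of a convex set is convex. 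A final cosmetic remark: to invoke Definition \ref{d2.1} and uniqueness of the Hukuhara difference at the last step, one should observe that the integrals are a.s.\ compact convex valued (closed, convex and bounded in $\mathbb{R}^n$), which again follows from square-integrable boundedness.
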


\begin{lm}\cite{Ararat2020Set-valued}\label{42}
Let $F\in \mathcal{L}^2_{ad}([0,T]\times \Omega,\mathcal{K}(\mathbb{R}^n))$ and $G\in \mathcal{L}^2_{ad}([0,T]\times \Omega,\mathcal{K}(\mathbb{R}^{n\times m}))$. Then for each $t\in [0,T]$,
$$\int_0^T F(s)ds=\int^t_0 F(s)ds+\int_t^T F(s)ds,\quad \int_0^T G(s)dW_s=\int_0^t G(s)dW_s+\int_t^T G(s)dW_s$$
and
$$\int_t^TF(s)ds=\int^T_0 F(s)ds\ominus \int^t_0 F(s)ds,\quad \int_t^TG(s)dW_s=\int_0^TG(s)dW_s \ominus \int_0^tG(s)dW_s$$
hold $\mathbb{P}$-a.s..
\end{lm}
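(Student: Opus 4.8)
The plan is to reduce both displayed assertions to the additivity of the set-valued integral over a fixed interval, recorded in the lemma immediately preceding this one, combined with the definitional convention that restricts integration to a subinterval by inserting a deterministic indicator. First I would record the pointwise decomposition of integrands: for every $s\in[0,T]$,
\begin{equation*}
1_{(0,T]}(s)F(s)=1_{(0,t]}(s)F(s)+1_{(t,T]}(s)F(s),
\end{equation*}
which is an exact identity in $\mathcal{K}(\mathbb{R}^n)$ since $(0,t]$ and $(t,T]$ partition $(0,T]$: at each $s$ at most one indicator is nonzero, and wherever an indicator vanishes the corresponding summand is the singleton $\{0\}$, the neutral element for Minkowski addition, so no endpoint ambiguity arises. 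The same identity holds verbatim with $F$ replaced by $G\in\mathcal{L}^2_{ad}([0,T]\times\Omega,\mathcal{K}(\mathbb{R}^{n\times m}))$.

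Before invoking additivity I would check that the truncated processes $F_1:=1_{(0,t]}F$ and $F_2:=1_{(t,T]}F$ lie in $\mathcal{L}^2_{ad}([0,T]\times\Omega,\mathcal{K}(\mathbb{R}^n))$. This is routine: multiplying by a deterministic indicator preserves $\mathbb{F}$-nonanticipativity and measurability, and only decreases the norm, so $\|1_{(0,t]}F\|\le\|F\|$ keeps the process square-integrably bounded (and likewise for $F_2$ and the analogues of $G$). Applying the additivity of the set-valued integral from the previous lemma, taken over the full interval $[0,T]$, to the pair $F_1,F_2$ then gives
\begin{equation*}
\int_0^T 1_{(0,T]}(s)F(s)\,ds=\int_0^T 1_{(0,t]}(s)F(s)\,ds+\int_0^T 1_{(t,T]}(s)F(s)\,ds
\end{equation*}
$\mathbb{P}$-a.s.. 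By the definition of the set-valued integrals over subintervals, the three terms are respectively $\int_0^T F(s)\,ds$, $\int_0^t F(s)\,ds$ and $\int_t^T F(s)\,ds$, yielding the first identity for $F$; repeating the argument with the Itô functional in place of the Lebesgue one establishes it for $G$.

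Finally, the Hukuhara identities follow directly. Having shown $\int_0^T F(s)\,ds=\int_0^t F(s)\,ds+\int_t^T F(s)\,ds$, the defining equivalence of the Hukuhara difference in Definition \ref{d2.1}, $A\ominus B=C\Longleftrightarrow A=B+C$, applied with $A=\int_0^T F(s)\,ds$, $B=\int_0^t F(s)\,ds$, $C=\int_t^T F(s)\,ds$, shows that $\int_0^T F(s)\,ds\ominus\int_0^t F(s)\,ds$ exists and equals $\int_t^T F(s)\,ds$; the cancellation law noted above guarantees this $C$ is unique, and the same reasoning applies to $G$.

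I do not anticipate a substantive obstacle: the entire content is the bookkeeping of the indicator splitting plus a single application of the previously proved additivity, after which the Hukuhara statements are immediate from the definition. The only point requiring mild care is verifying that the truncated integrands remain in the admissible space so that the additivity lemma is applicable; this is where I would be most careful, but it reduces to the norm domination $\|1_{(0,t]}F\|\le\|F\|$ and the preservation of nonanticipativity under multiplication by a deterministic indicator.
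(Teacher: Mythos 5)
Your argument is correct: the indicator decomposition $1_{(0,T]}F=1_{(0,t]}F+1_{(t,T]}F$, the check that the truncated processes stay in $\mathcal{L}^2_{ad}$, an application of the preceding additivity lemma, and the definitional equivalence $A\ominus B=C\Longleftrightarrow A=B+C$ together give exactly the stated identities. The paper itself offers no proof here --- the lemma is quoted from Ararat et al.\ --- and your route is the natural one that source takes, so there is nothing further to reconcile.
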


\begin{lm}\cite{Ararat2020Set-valued} \label{l2.11}
For any $Z,Z_1,Z_2\in \mathcal{K}_w(\mathcal{L}^2_{ad}([0,T] \times \Omega ,\mathcal{R}^{n\times m}))$, the following statements hold:
\begin{enumerate}[($\romannumeral1$)]
\item $Z_1+Z_2 \in \mathcal{K}_w(\mathcal{L}^2_{ad}([0,T] \times \Omega ,\mathcal{R}^{n\times m}))$ and for every $t\in [0,T]$,
$$\int_0^t(Z_1+Z_2)dW_s=\int_0^t Z_1dW_s+\int_0^t Z_2dW_s,\quad \mathbb{P}\mbox{-}a.s.;$$
\item If $Z_1\ominus Z_2$ exists, then $Z_1\ominus Z_2 \in \mathcal{K}_w(\mathcal{L}^2_{ad}([0,T] \times \Omega ,\mathcal{R}^{n\times m}))$ and for every $t\in [0,T]$,
$$\int_0^t(Z_1 \ominus Z_2)dW_s=\int_0^t Z_1dW_s\ominus \int_0^t Z_2dW_s,\quad \mathbb{P}\mbox{-}a.s.;$$
\item If $Z_1\ominus Z_2$ exists and $\int_0^t Z_1dW_s=\int_0^t Z_2dW_s,\; \mathbb{P}$-a.s. for all $t\in [0,T]$, then $Z_1=Z_2$ $\mathbb{P}$-a.s.;
\item $$\int_0^T ZdW_s=\int_0^t ZdW_s+\int_t^T Z dW_s,\quad \int_t^T ZdW_s=\int_0^T ZdW_s\ominus \int_0^t Z dW_s.$$
\end{enumerate}
\end{lm}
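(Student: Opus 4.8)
The plan is to reduce every statement to the level of selectors, where the ordinary It\^{o} integral is linear and obeys the isometry of Lemma \ref{itoi}, and then to transfer the conclusions back to the set level through the closed-decomposable-hull description of the set-valued integral (so that $S^2_T(\int_0^t Z\,dW_s)=\overline{dec}(\mathcal{J}_{0t}[Z])$, where $\mathcal{J}_{0t}[Z]=\mathcal{J}[1_{(0,t]}Z]$) together with the cancellation law in $\mathcal{K}(\mathbb{R}^{n\times m})$. For (i) I would first check that $Z_1+Z_2$ is again weakly compact and convex: convexity of a Minkowski sum is immediate, and weak compactness follows since the addition map is jointly weakly continuous and $Z_1\times Z_2$ is weakly compact by Tychonoff, so its continuous image $Z_1+Z_2$ is weakly compact. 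For the integral identity I would argue at the selector level: linearity of the It\^{o} integral gives $\mathcal{J}_{0t}[Z_1+Z_2]=\mathcal{J}_{0t}[Z_1]+\mathcal{J}_{0t}[Z_2]$, and passing to closed decomposable hulls yields $\overline{dec}(\mathcal{J}_{0t}[Z_1+Z_2])=\overline{dec}(\mathcal{J}_{0t}[Z_1])+\overline{dec}(\mathcal{J}_{0t}[Z_2])$, which is precisely the asserted equality $\mathbb{P}$-a.s.\ by the defining property of the set-valued integral.

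For (ii), I would set $Z_3:=Z_1\ominus Z_2$, so that $Z_1=Z_2+Z_3$ with $Z_3\in\mathcal{K}_w(\mathcal{L}^2_{ad}([0,T]\times\Omega,\mathbb{R}^{n\times m}))$. Applying (i) gives $\int_0^t Z_1\,dW_s=\int_0^t Z_2\,dW_s+\int_0^t Z_3\,dW_s$, and then the definition of the Hukuhara difference together with the cancellation law identifies $\int_0^t Z_3\,dW_s=\int_0^t Z_1\,dW_s\ominus\int_0^t Z_2\,dW_s$, as claimed. For (iv) I would use $1_{(0,T]}=1_{(0,t]}+1_{(t,T]}$ together with (i) to obtain $\int_0^T Z\,dW_s=\int_0^t Z\,dW_s+\int_t^T Z\,dW_s$; the second equality is then immediate from the definition of $\ominus$, since the first equality exhibits $\int_t^T Z\,dW_s$ as the (by cancellation, unique) set $C$ satisfying $\int_0^T Z\,dW_s=\int_0^t Z\,dW_s+C$.

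The part I expect to be the crux is (iii). Here part (ii) together with Lemma \ref{12.1}$(\romannumeral1)$ reduces the hypothesis to $\int_0^t(Z_1\ominus Z_2)\,dW_s=\int_0^t Z_1\,dW_s\ominus\int_0^t Z_2\,dW_s=\{0\}$ for every $t\in[0,T]$, and the task becomes showing that $Z_3:=Z_1\ominus Z_2=\{0\}$. Since $\mathcal{J}_{0t}[Z_3]\subseteq\overline{dec}(\mathcal{J}_{0t}[Z_3])=\{0\}$, every selector $z\in Z_3$ satisfies $\int_0^t z\,dW_s=0$ $\mathbb{P}$-a.s.; taking $t=T$ and invoking the It\^{o} isometry of Lemma \ref{itoi} gives $\mathbb{E}\int_0^T|z|^2\,ds=0$, whence $z=0$ in $\mathcal{L}^2_{ad}$. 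As this holds for every element of $Z_3$, we conclude $Z_3=\{0\}$ and therefore $Z_1=Z_2$ $\mathbb{P}$-a.s. The genuinely delicate point running through the argument is the interchange of the closed decomposable hull with Minkowski sums and Hukuhara differences, which is exactly what allows the selector-level linearity and isometry of the It\^{o} integral to pass faithfully to the set-valued integral.
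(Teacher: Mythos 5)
First, a point of reference: the paper does not prove Lemma \ref{l2.11} at all --- it is imported verbatim from \cite{Ararat2020Set-valued} --- so there is no in-paper argument to compare yours against; I can only assess your proposal on its own terms. Parts (ii) and (iii) are in good shape conditional on (i): the reduction $Z_1=Z_2+Z_3$, the cancellation law, and the It\^{o}-isometry argument forcing every selector of $Z_3$ to vanish are exactly the right moves. For (i), however, you correctly identify the delicate step and then do not carry it out: from $dec(A)+dec(B)=dec(A+B)$ one gets $\overline{dec}(A)+\overline{dec}(B)\subseteq\overline{dec}(A+B)$ for free, but the reverse inclusion requires the Minkowski sum of the two closed hulls to be \emph{closed}, which is not automatic in an infinite-dimensional $L^2$ space; you need to argue that (say) $\overline{dec}(\mathcal{J}_{0t}[Z_1])$ is weakly compact, and that is not obvious, since decomposable hulls of bounded sets need not be bounded unless the set is integrably bounded by a single $L^2$ majorant. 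This needs to be supplied, not just flagged.

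The more serious gap is in (iv). Applying (i) to $Z_1=1_{(0,t]}Z$ and $Z_2=1_{(t,T]}Z$ only yields
\begin{equation*}
\int_0^T Z\,dW_s\subseteq\int_0^t Z\,dW_s+\int_t^T Z\,dW_s,
\end{equation*}
because the Minkowski sum on the right ranges over \emph{independent} selectors $z_1,z_2\in Z$ on the two subintervals, whereas the left side uses a single $z\in Z$ throughout. The reverse inclusion is false for the naive direct-image integral: take $Z=\overline{\mathrm{conv}}\{0,1\}$; then $\int_0^t 0\,dW_s+\int_t^T 1\,dW_s=W_T-W_t$ lies in the sum but is not of the form $\int_0^T(\lambda\cdot 1)\,dW_s=\lambda W_T$ for any $\lambda\in[0,1]$. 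So the identity in (iv) genuinely depends on the decomposability built into the definition of the set-valued integral (equivalently, on $Z$ being closed under predictable patchings $1_{(0,t]}z_1+1_{(t,T]}z_2$), and your argument never invokes this; "use $1_{(0,T]}=1_{(0,t]}+1_{(t,T]}$ together with (i)" does not close the gap. Once (iv)'s first identity is established by whatever decomposability argument the definition supports, your derivation of the second identity from the cancellation law is fine.
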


\begin{lm} \cite{Kisielewicz2014Martingale} \label{6}
For every convex-valued square-integrable set-valued martingale $M=\left\{M_t\right\}_{t\in [0,T]}$, there exists $\mathcal{G}^M \in \mathcal{P}(\mathcal{L}^2_{ad}([0,T]\times \Omega,\mathbb{R}^{n\times m})$ such that $M_t=M_0+\int_0^t \mathcal{G}^M dB_s$ $\mathbb{P}$-a.s. for all $t\in[0,T]$. Moreover, if $M$ is uniformly square-integrably bounded, then $\mathcal{G}^M$ is a convex weakly compact set, i.e., $\mathcal{G}^M \in \mathcal{K}_w(\mathcal{L}^2_{ad}([0,T] \times \Omega,\mathcal{R}^{n\times m}))$.
\end{lm}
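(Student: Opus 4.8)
The plan is to reduce the set-valued statement to the classical (single-valued) It\^o martingale representation theorem by passing through martingale selectors, and then to recover the set-valued representation by taking the union over all selectors. Throughout I work under the standing assumption that $\mathbb{F}$ is the augmented filtration generated by the driving Brownian motion $B$, which is what makes the classical representation available. First I would introduce the family of square-integrable martingale selectors of $M$,
$$\mathcal{M}(M):=\left\{ m=\{m_t\}_{t\in[0,T]} : m \text{ is an } \mathbb{R}^n\text{-valued } \mathcal{F}\text{-martingale},\ m_t\in S^{*2}_t(M_t)\ \forall t \right\}.$$
The first key step is to show that $M$ is generated by its selectors, i.e.\ that for every $t$ one has $M_t=\overline{\{ m_t: m\in \mathcal{M}(M)\}}$ (closure in $\mathbb{L}^2_t$), and that $\mathcal{M}(M)$ is nonempty. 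Nonemptiness comes from the Kuratowski--Ryll-Nardzewski selection lemma applied to $M_T$, after which the selector is propagated backward by the conditional expectation $m_s:=\mathbb{E}[m_t\mid\mathcal{F}_s]$, which lands in $S^{*2}_s(M_s)$ precisely because $M$ is a martingale in the sense of Definition~\ref{d25}(iii). The generation identity is the Castaing/Hiai--Umegaki representation of the convex closed value $M_t$ together with the fact that every $\mathcal{F}_t$-selector of $M_t$ extends to a martingale selector by conditioning.

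Next I apply the classical It\^o representation theorem to each $m\in\mathcal{M}(M)$: since $m$ is a square-integrable Brownian martingale there is a unique $g^m\in\mathcal{L}^2_{ad}([0,T]\times\Omega,\mathbb{R}^{n\times m})$ with $m_t=m_0+\int_0^t g^m_s\,dB_s$, and the It\^o isometry of Lemma~\ref{itoi} gives $\mathbb{E}\int_0^T|g^m_s|^2\,ds=\mathbb{E}|m_T-m_0|^2$. I then set $\mathcal{G}^M:=\{g^m: m\in\mathcal{M}(M)\}$. Reading $\int_0^t\mathcal{G}^M\,dB_s=\{\int_0^t g\,dB_s: g\in\mathcal{G}^M\}$ in the sense used in Lemma~\ref{l2.11}, and combining the generation identity with the representation of each selector and the definition of the set-valued integral (whose selection set is $\overline{dec}(\mathcal{J}[S^2_{\mathbb{F}}(\cdot)])$), one obtains $M_t=M_0+\int_0^t\mathcal{G}^M\,dB_s$ $\mathbb{P}$-a.s.; the decomposable-hull closure in that definition is exactly what matches the closure in the generation identity, and linearity of $\mathcal{J}$ transports the decomposability of the selector set $S^{*2}_t(M_t)$ to $\mathcal{G}^M$.

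For the final assertion, suppose $M$ is uniformly square-integrably bounded, say $\sup_{t}\mathbb{E}\|M_t\|^2\le c^2$. Convexity of $\mathcal{G}^M$ follows from convexity of the values $M_t$: a convex combination $\lambda m^1+(1-\lambda)m^2$ of martingale selectors is again a martingale selector, and uniqueness plus linearity of the It\^o representation give $g^{\lambda m^1+(1-\lambda)m^2}=\lambda g^{m^1}+(1-\lambda)g^{m^2}$. For weak compactness I would use that the isometry bound $\mathbb{E}\int_0^T|g^m_s|^2\,ds=\mathbb{E}|m_T-m_0|^2\le 4c^2$ makes $\mathcal{G}^M$ norm-bounded in the Hilbert space $\mathcal{L}^2_{ad}([0,T]\times\Omega,\mathbb{R}^{n\times m})$; a bounded closed convex subset of a Hilbert space is weakly compact, so it remains to check that $\mathcal{G}^M$ is (weakly, equivalently strongly) closed. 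Closedness is inherited from closedness of $M_T$ through the isometry $g\mapsto m_0+\int_0^\cdot g\,dB$: if $g^{m_k}\rightharpoonup g$, then $m^k_T\to m_0+\int_0^T g\,dB$ in $L^2$, whose limit stays in the closed set $S^{*2}_T(M_T)$, identifying $g$ as some $g^m\in\mathcal{G}^M$.

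The hard part will be the generation step and the matching of closures: showing rigorously that the set-valued integral built with the $\overline{dec}(\cdot)$-operation from $\mathcal{G}^M$ reproduces $M_t$ for \emph{every} $t$ simultaneously (not merely for a fixed $t$), which requires the backward-conditioning construction of selectors to be consistent across time and the decomposability of $\mathcal{G}^M$ to be genuine. The weak-compactness argument, by contrast, is a fairly standard boundedness-plus-closedness argument once the It\^o isometry is in hand.
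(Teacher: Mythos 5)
This lemma is stated in the paper purely as an imported result, cited to Kisielewicz (2014); the paper contains no proof of it, so there is nothing internal to compare against. Your outline is, in its architecture, the argument of that reference: construct martingale selectors, apply the classical It\^o representation to each, collect the integrands into $\mathcal{G}^M$, and recover the set-valued identity through the closures built into the definition of the set-valued integral, with the weak compactness coming from the isometry bound plus convexity. Two places in your sketch are genuinely soft, though both are repairable. First, the claim that ``every $\mathcal{F}_t$-selector of $M_t$ extends to a martingale selector by conditioning'' is only half right: conditioning extends a selector \emph{backward} in time, but to extend it forward you must produce $g\in S^{*2}_T(M_T)$ with $\mathbb{E}[g\mid\mathcal{F}_t]$ equal to (or approximating) the given selector, and that requires the Hiai--Umegaki characterization $S^{2}_{\mathcal{F}_t}(\mathbb{E}[M_T\mid\mathcal{F}_t])=\overline{\{\mathbb{E}[g\mid\mathcal{F}_t]:g\in S^{2}_{T}(M_T)\}}$, which only gives the extension up to closure; your generation identity survives because you take closures anyway, but the step deserves to be named rather than folded into ``conditioning.'' Second, in the closedness argument you pass from $g^{m_k}\rightharpoonup g$ to $m^k_T\to m_0+\int_0^T g\,dB$ \emph{in} $L^2$; weak convergence of the integrands yields only weak convergence of the stochastic integrals. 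The repair is standard: the It\^o integral is a bounded linear (hence weakly continuous) map, $S^{*2}_T(M_T)$ is closed and convex, hence weakly closed, so the weak limit $m_T$ stays in it; then define $m_t:=\mathbb{E}[m_T\mid\mathcal{F}_t]$ and use uniqueness of the classical representation to identify its integrand with $g$. With those two adjustments the proof is sound, under your (necessary, and correctly flagged) standing assumption that $\mathbb{F}$ is the augmented Brownian filtration.
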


Denote $\mathbb{G}$ by
$$\mathbb{G}:=\left\{ Z(t) \in \mathcal{K}_w(\mathcal{L}^2_{ad}([0,T] \times \Omega ,\mathcal{R}^{n\times m})):\left\{\int_0^t Z(s) dW_s\right\}_{t\in [0,T]} \;\mbox{is an} \; \mathcal{F}_t \;\mbox{martingale}\right\}.$$

\begin{remark}\label{r21}
Since $\mathcal{K}_w(\mathcal{L}^2_{ad}([0,T] \times \Omega ,\mathcal{R}^{n\times m}))$ is weakly closed and bounded, $\mathbb{G}$ is closed.
\end{remark}

\begin{lm} \cite{Ararat2020Set-valued} \label{5}
Let $X_1, X_2 \in \mathbb{L}_T^2(\Omega,\mathcal{K}(\mathbb{R}^n))$ and $\mathcal{G} \subset \mathcal{F}$ be a sub-$\sigma$-field. If $X_1\ominus X_2$ exists, then $E[X_1\ominus X_2|\mathcal{G}]$ exists in $\mathbb{L}_{G}^2(\Omega,\mathcal{K}(\mathbb{R}^n))$ and
$$E[X_1\ominus X_2|\mathcal{G}]=E[X_1|\mathcal{G}]\ominus E[X_2|\mathcal{G}].$$
\end{lm}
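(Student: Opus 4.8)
The plan is to reduce the statement to the additivity of the set-valued conditional expectation combined with the defining property of the Hukuhara difference. Since $X_1 \ominus X_2$ exists, I would set $C := X_1 \ominus X_2$; by Definition \ref{d2.1} this means precisely that $X_1 = X_2 + C$. The entire argument then consists of applying $E[\,\cdot\,|\mathcal{G}]$ to this identity, pushing the conditional expectation through the sum, and reading the resulting relation back through Definition \ref{d2.1}.

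First I would check that $C$ lies in $\mathbb{L}_T^2(\Omega,\mathcal{K}(\mathbb{R}^n))$, so that $E[C|\mathcal{G}]$ is well defined. The values of a Hukuhara difference of compact convex sets are again compact and convex, and the measurability of $\omega \mapsto C(\omega)$ follows from that of $X_1, X_2$ (via a measurable selection argument using the Kuratowski--Ryll-Nardzewski theorem, or equivalently from the fact that the support function of $C$ is the pointwise difference of the support functions of $X_1$ and $X_2$). Square-integrability is immediate from Lemma \ref{l2.4}(ii): since $C = X_1 \ominus X_2$ exists, $\|C\| = h(X_1,X_2) \le \|X_1\| + \|X_2\|$ by the triangle inequality for $h$, and the right-hand side is square-integrable because $X_1, X_2 \in \mathbb{L}_T^2(\Omega,\mathcal{K}(\mathbb{R}^n))$. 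Hence $C \in \mathbb{L}_T^2(\Omega,\mathcal{K}(\mathbb{R}^n))$, and $E[C|\mathcal{G}] \in \mathbb{L}_\mathcal{G}^2(\Omega,\mathcal{K}(\mathbb{R}^n))$ exists.

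Next I would establish the additivity $E[X_2 + C|\mathcal{G}] = E[X_2|\mathcal{G}] + E[C|\mathcal{G}]$ directly from the integral characterization of $E[\,\cdot\,|\mathcal{G}]$. For every $G \in \mathcal{G}$ the Aumann integral is additive, $\int_G (X_2 + C)\,d\mathbb{P} = \int_G X_2\,d\mathbb{P} + \int_G C\,d\mathbb{P}$, so the $\mathcal{G}$-measurable set $E[X_2|\mathcal{G}] + E[C|\mathcal{G}]$ satisfies $\int_G (E[X_2|\mathcal{G}] + E[C|\mathcal{G}])\,d\mathbb{P} = \int_G (X_2 + C)\,d\mathbb{P} = \int_G X_1\,d\mathbb{P}$ for all $G \in \mathcal{G}$. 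By the uniqueness in the definition of the conditional expectation, this forces
\[
E[X_1|\mathcal{G}] = E[X_2|\mathcal{G}] + E[C|\mathcal{G}].
\]
Reading this identity through Definition \ref{d2.1} shows that $E[X_1|\mathcal{G}] \ominus E[X_2|\mathcal{G}]$ exists and equals $E[C|\mathcal{G}] = E[X_1 \ominus X_2|\mathcal{G}]$, which is exactly the claim.

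The main obstacle is the additivity step $\int_G(X_2+C)\,d\mathbb{P}=\int_G X_2\,d\mathbb{P}+\int_G C\,d\mathbb{P}$ for the set-valued (Aumann) integral: the inclusion $\supseteq$ is trivial, but the reverse inclusion genuinely requires convex values and the nonatomicity of $\mathbb{P}$ (through a Lyapunov-type argument), which is precisely why nonatomicity is imposed as a standing hypothesis throughout the paper. A secondary and more routine point is the measurability of the Hukuhara difference $C$; once both are in hand, the remainder is a short manipulation with Definition \ref{d2.1}.
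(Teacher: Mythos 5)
The paper states Lemma \ref{5} purely as a quotation from \cite{Ararat2020Set-valued} and supplies no proof of its own, so there is nothing internal to compare against; judged on its own terms, your argument is correct and is the standard one (essentially the proof in the cited source): write $X_1=X_2+(X_1\ominus X_2)$, check that the difference lies in $\mathbb{L}^2_T(\Omega,\mathcal{K}(\mathbb{R}^n))$ via Lemma \ref{l2.4}(ii) and the triangle inequality for $h$, push $E[\,\cdot\,|\mathcal{G}]$ through the sum using the integral characterization and uniqueness, and read off the Hukuhara difference from Definition \ref{d2.1}. One minor quibble: the nontrivial inclusion $\int_G(X_2+C)\,d\mathbb{P}\subseteq\int_G X_2\,d\mathbb{P}+\int_G C\,d\mathbb{P}$ is normally obtained by decomposing each selection of $X_2+C$ pointwise into a sum of selections of $X_2$ and of $C$ via the Kuratowski--Ryll-Nardzewski theorem, rather than by a Lyapunov/nonatomicity argument (nonatomicity is needed only to convexify Aumann integrals of non-convex-valued maps, which is irrelevant here since all values are already compact and convex); this misattribution does not affect the validity of your proof.
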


\section{Set-valued It\^{o}'s formula}
\noindent \setcounter{equation}{0}
In this section, we extend the classical It\^{o}'s formula to the set-valued case. To this end, we need the following lemma.

\begin{lm}\cite{Kisielewicz2013Stochastic}\label{7}
\begin{enumerate}[($\romannumeral1$)]
\item If $(\Omega,\mathcal{F},\mathbb{P})$ is separable and $\Phi_t \in \mathcal{L}^2_{ad}([0,T]\times \Omega,\mathcal{L}(\mathbb{R}^{n\times m}))$, then there exists a sequence $\left\{\phi^n\right\}_{n=1}^{\infty} \subset S_{\mathbb{F}}^2(\Phi)$ such that
    $$\left(\int_0^T \Phi_s dW_s\right)(\omega)=cl_{\mathbb{L}}\left\{\left(\int_0^T \phi_t^ndW_s\right)(\omega):n\geqslant 1\right\} \quad \mathbb{P}\mbox{-}a.s.,$$
 where $cl_{\mathbb{L}}$ means the closure taken in the norm topology of $\mathcal{L}^2_{ad}([0,T]\times \Omega,\mathcal{L}(\mathbb{R}^n))$.
\item If $(\Omega,\mathcal{F},\mathbb{P})$ is separable and $\Psi_t \in \mathcal{L}^2_{ad}([0,T]\times \Omega,\mathcal{L}(\mathbb{R}^n))$, then there exists a sequence $\left\{\psi^n\right\}_{n=1}^{\infty} \subset S_{\mathbb{F}}^2(\Psi)$ such that
    $$\left(\int_0^T \Psi_s ds\right)(\omega)=cl_{\mathbb{L}}\left\{\left(\int_0^T \psi_t^nds\right)(\omega):n\geqslant 1\right\}\quad \mathbb{P}\mbox{-}a.s..$$
    \end{enumerate}
\end{lm}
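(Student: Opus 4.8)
The statement is a Castaing-type representation for the set-valued stochastic integral, and the plan is to deduce it from the defining identity $S^2_T\!\left(\int_0^T \Phi_s\,dW_s\right)=\overline{dec}(\mathcal{J}[S^2_{\mathbb{F}}(\Phi)])$ together with the Hiai--Umegaki correspondence between nonempty closed decomposable subsets of $L^2_{ad}(\Omega,\mathcal{F}_T,\mathbb{R}^n)$ and measurable closed-valued multifunctions. Write $\Xi:=\int_0^T \Phi_s\,dW_s$, so that $\Xi$ is a set-valued random variable whose $\mathcal{F}_T$-measurable square-integrable selections form exactly $\overline{dec}(\mathcal{J}[S^2_{\mathbb{F}}(\Phi)])$. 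The aim is to exhibit a single countable family $\{\phi^n\}\subset S^2_{\mathbb{F}}(\Phi)$ whose ordinary It\^o integrals $\mathcal{J}(\phi^n)=\int_0^T\phi^n_s\,dW_s$ are pointwise dense in $\Xi(\omega)$ for almost every $\omega$.

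First I would use separability. Since $(\Omega,\mathcal{F},\mathbb{P})$ is separable, the Hilbert space $L^2_{ad}([0,T]\times\Omega,\mathbb{R}^{n\times m})$ and its $\Sigma_{\mathbb{F}}$-measurable closed subspace are separable, so the nonempty set $S^2_{\mathbb{F}}(\Phi)$ carries a countable $L^2$-dense sequence $\{\phi^n\}_{n\geq 1}$. I then define the candidate multifunction $\Gamma(\omega):=cl_{\mathbb{L}}\{\mathcal{J}(\phi^n)(\omega):n\geq 1\}$, the pointwise closure in $\mathbb{R}^n$ of this countable family of classical It\^o integrals; as a pointwise closure of countably many $\mathcal{F}_T$-measurable random vectors, $\Gamma$ is automatically a closed-valued measurable multifunction (and the Kuratowski--Ryll-Nardzewski theorem recalled above guarantees it has measurable selectors).

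The core of the argument is to prove $S^2_T(\Gamma)=S^2_T(\Xi)$ and then invoke the uniqueness half of Hiai--Umegaki to conclude $\Gamma=\Xi$ $\mathbb{P}$-a.s. For $S^2_T(\Xi)\subset S^2_T(\Gamma)$, since $S^2_T(\Gamma)$ is closed and decomposable it suffices to check $\mathcal{J}[S^2_{\mathbb{F}}(\Phi)]\subset S^2_T(\Gamma)$: given $\phi\in S^2_{\mathbb{F}}(\Phi)$, density yields $\phi^{n_k}\to\phi$ in $L^2$, and the It\^o isometry of Lemma \ref{itoi}, namely $\mathbb{E}|\mathcal{J}(\phi^{n_k})-\mathcal{J}(\phi)|^2=\mathbb{E}\int_0^T|\phi^{n_k}-\phi|^2\,dt\to 0$, gives $\mathcal{J}(\phi^{n_k})\to\mathcal{J}(\phi)$ in $L^2(\Omega)$, hence almost surely along a subsequence, so that $\mathcal{J}(\phi)(\omega)\in\Gamma(\omega)$ a.s. For the reverse inclusion, each $\mathcal{J}(\phi^n)$ lies in $\mathcal{J}[S^2_{\mathbb{F}}(\Phi)]\subset S^2_T(\Xi)$, and since $S^2_T(\Gamma)$ is the closed decomposable hull of $\{\mathcal{J}(\phi^n)\}$ (again by Hiai--Umegaki) while $S^2_T(\Xi)$ is itself closed and decomposable, we obtain $S^2_T(\Gamma)\subset S^2_T(\Xi)$. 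Equality of the selection sets forces $\Xi=\Gamma$ a.s., which is precisely assertion (i). Assertion (ii) is identical with $\mathcal{J}$ replaced by the Bochner-type map $J$ and the isometry replaced by the Cauchy--Schwarz bound $\mathbb{E}|J(\psi^{n_k})-J(\psi)|^2\leq T\,\mathbb{E}\int_0^T|\psi^{n_k}-\psi|^2\,dt$, which again makes $J$ continuous from $L^2_{ad}([0,T]\times\Omega)$ into $L^2(\Omega)$.

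The step I expect to be the main obstacle is upgrading $L^2$-density to pointwise almost-sure density on a \emph{single} null set common to all selections. Taken one selection at a time, $L^2$ convergence only produces a.s. convergence along a subsequence with an exceptional null set depending on $\phi$; turning this into an identity of multifunctions valid off one fixed null set is exactly what the Hiai--Umegaki bijection supplies. Consequently the delicate points are verifying that $S^2_T(\Gamma)$ is genuinely the closed decomposable hull of the countable generating family and that decomposability survives the closures involved; once these are in place, the remainder reduces to continuity of the two integral operators plus separability.
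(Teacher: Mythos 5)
The paper contains no proof of this lemma: it is imported verbatim from Kisielewicz's monograph \cite{Kisielewicz2013Stochastic}, so there is nothing in-paper to compare against, and your argument is the standard (and, as far as the cited source goes, essentially the original) Castaing-representation proof. The chain you describe --- separability of $(\Omega,\mathcal{F},\mathbb{P})$ giving a countable $L^2$-dense family $\{\phi^n\}$ in $S^2_{\mathbb{F}}(\Phi)$, the Hiai--Umegaki identity $S^2_T(\Gamma)=\overline{dec}\left\{\mathcal{J}(\phi^n):n\geqslant 1\right\}$ for the pointwise closure $\Gamma(\omega)=cl\left\{\mathcal{J}(\phi^n)(\omega)\right\}$, $L^2$-continuity of $\mathcal{J}$ and $J$ via the It\^{o} isometry and Cauchy--Schwarz, and a.s.\ uniqueness of the multifunction attached to a nonempty closed decomposable selection set --- is correct and complete (and your reading of $cl_{\mathbb{L}}$ as the pointwise closure in $\mathbb{R}^n$ is the right way to resolve the paper's ambiguous notation).
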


We define a set-valued It\^{o}'s process as follows:
\begin{align}\label{e3.1}
X_t=x_{0}+\int_{0}^t f(s)dW_s+\int_0^tg(s)ds,\qquad 0\leqslant t\leqslant T,
\end{align}
where $x_0=X_0\in L^2_{ad}(\Omega ,\mathbb{R}^n)$, $f(t) \in \mathcal{L}^2_{ad}([0,T]\times \Omega,\mathcal{K}(\mathbb{R}^{n\times m}))$ and $g(t) \in \mathcal{L}^2_{ad}([0,T]\times \Omega,\mathcal{K}(\mathbb{R}^n))$.

Clearly, Lemma \ref{7} shows that the set-valued It\^{o}'s process defined by \eqref{e3.1} is a set-valued stochastic process and \eqref{e3.1} can be rewritten as follows:
\begin{align}\label{e3.2}
X_i(t)=x_{0i}+\int_0^tf_i(s)dW_s+\int_0^tg_i(s)ds,\qquad i=1,\cdots,n,
\end{align}
where $f^T_i(t) \in \mathcal{L}^2_{ad}([0,T]\times \Omega,\mathcal{K}(\mathbb{R}^m))$ and $g_i(t) \in \mathcal{L}^2_{ad}([0,T]\times \Omega,\mathcal{K}(\mathbb{R}))$ are the $i$-th component of $f(t)$ and $g(t)$, respectively.

\begin{lm}\label{lm3.3}
Let $A=\left\{(\int_0^t \phi_t^ndW_s)(\omega):n\geqslant 1\right\}$ and $B=\left\{(\int_0^t \psi_t^mds)(\omega):m\geqslant 1\right\}$ \; $\mathbb{P}$-a.s., where $\left\{\phi_t^n\right\}_{n=1}^{\infty} \subset S_{\mathbb{F}}^2(\Phi)$ and $\left\{\psi_t^n\right\}_{n=1}^{\infty} \subset S_{\mathbb{F}}^2(\Psi)$. Then $cl_{\mathbb{L}}(A)+cl_{\mathbb{L}}(B)= cl_{\mathbb{L}}(A+B)$.
\end{lm}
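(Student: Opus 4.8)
The plan is to prove the identity by establishing the two inclusions $cl_{\mathbb{L}}(A)+cl_{\mathbb{L}}(B)\subseteq cl_{\mathbb{L}}(A+B)$ and $cl_{\mathbb{L}}(A+B)\subseteq cl_{\mathbb{L}}(A)+cl_{\mathbb{L}}(B)$ separately, where $A+B$ denotes the Minkowski sum $\left\{a+b:a\in A,\,b\in B\right\}$ and, in view of the ``$\mathbb{P}$-a.s.'' in the statement, the closures are read at a fixed $\omega$, so that $A$ and $B$ are subsets of $\mathbb{R}^n$.

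First I would dispatch the inclusion $cl_{\mathbb{L}}(A)+cl_{\mathbb{L}}(B)\subseteq cl_{\mathbb{L}}(A+B)$, which holds for arbitrary sets and needs no special structure. Given $x\in cl_{\mathbb{L}}(A)$ and $y\in cl_{\mathbb{L}}(B)$, I would choose sequences $a_k\in A$ and $b_k\in B$ with $a_k\to x$ and $b_k\to y$; then $a_k+b_k\in A+B$ and, by continuity of addition, $a_k+b_k\to x+y$, so $x+y\in cl_{\mathbb{L}}(A+B)$.

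The reverse inclusion is the crux, and the hard part is that the general relation $cl_{\mathbb{L}}(A+B)\subseteq cl_{\mathbb{L}}(A)+cl_{\mathbb{L}}(B)$ is \emph{false} without a compactness hypothesis, so the real work is to exhibit that compactness. The key observation is that $cl_{\mathbb{L}}(A)$ and $cl_{\mathbb{L}}(B)$ are in fact compact: every element of $A$ is a selection value $\left(\int_0^t \phi_t^n\,dW_s\right)(\omega)$ of the set-valued integral, and by Lemma \ref{7} the closure $cl_{\mathbb{L}}(A)$ is precisely the value of $\int_0^t \Phi_s\,dW_s$ at $\omega$; since $\Phi$ is square-integrably bounded this value is a bounded closed subset of $\mathbb{R}^n$, hence a compact set in $\mathcal{K}(\mathbb{R}^n)$. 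The same argument applied to the Lebesgue integral gives that $cl_{\mathbb{L}}(B)$ is compact. I would then note that the Minkowski sum of two compact sets is compact, being the image of the compact product $cl_{\mathbb{L}}(A)\times cl_{\mathbb{L}}(B)$ under the continuous map $(x,y)\mapsto x+y$, and therefore closed. Since $A+B\subseteq cl_{\mathbb{L}}(A)+cl_{\mathbb{L}}(B)$ and the right-hand side is closed, taking closures yields $cl_{\mathbb{L}}(A+B)\subseteq cl_{\mathbb{L}}(A)+cl_{\mathbb{L}}(B)$, which together with the first inclusion gives the claimed equality.

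I expect the main obstacle to be precisely this reverse inclusion: one must resist the temptation to argue it by a naive sequence extraction from $a_k+b_k\to c\in cl_{\mathbb{L}}(A+B)$, which would tacitly assume relative compactness of $A$ or $B$. Instead the compactness must be earned, by combining the square-integrable boundedness of the integrands with the identification in Lemma \ref{7} of the closures as set-valued integral values lying in $\mathcal{K}(\mathbb{R}^n)$. Once compactness is secured, the closedness of the Minkowski sum is immediate and the remainder of the argument is routine.
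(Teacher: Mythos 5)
Your Step~1 inclusion $cl_{\mathbb{L}}(A)+cl_{\mathbb{L}}(B)\subseteq cl_{\mathbb{L}}(A+B)$ is fine and coincides with the paper's. The reverse inclusion, however, rests on a misreading and on an unjustified compactness claim. First, $cl_{\mathbb{L}}$ is not the pointwise closure in $\mathbb{R}^n$ at a fixed $\omega$: Lemma \ref{7} defines it as the closure in the norm topology of $\mathcal{L}^2_{ad}([0,T]\times \Omega,\mathcal{L}(\mathbb{R}^n))$, and the paper's proof accordingly works with the norm $\|\cdot\|_c=\mathbb{E}\left[\int_0^T|\cdot|^2dt\right]^{1/2}$. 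In that infinite-dimensional space a norm-closed bounded set is not compact, so the ``compact plus compact is closed'' route is simply unavailable. Second, even if one granted your pointwise reading, the assertion that $cl_{\mathbb{L}}(A)=\left(\int_0^t\Phi_s\,dW_s\right)(\omega)$ is a bounded, hence compact, subset of $\mathbb{R}^n$ because $\Phi$ is square-integrably bounded is not justified: square-integrable boundedness of the integrand does not by itself yield boundedness of a set-valued It\^{o} integral (this is a known pathology of such integrals), so the compactness you need would itself have to be proved rather than read off from the hypothesis.

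The ingredient you are missing is the one the paper actually uses: an $L^2$-orthogonality argument specific to the structure of $A$ and $B$. Writing $x_n=a_n+b_n$ with $a_n=\int_0^t f^n\,dW_s$ and $b_n=\int_0^t g^n\,ds$, the paper expands $\|x_n-x_m\|^2_c$ via the It\^{o} isometry (Lemma \ref{itoi}) into the sum of the squared norms of the stochastic-integral increment and the Lebesgue-integral increment, with no cross term, and concludes that Cauchyness of $\{x_n\}$ forces $\{a_n\}$ and $\{b_n\}$ to be Cauchy separately; each then converges to an element of $\overline{A}$ and $\overline{B}$ respectively. This is exactly the ``naive sequence extraction'' you rightly flag as illegitimate for arbitrary sets, but here it is legitimated by the decomposition of the norm, not by compactness. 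You correctly located the difficulty in the reverse inclusion, but the tool you reached for does not exist in this setting, and the argument as written does not close the gap.
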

\begin{proof}
Let $\overline{A}=cl_{\mathbb{L}}(A)$ and $\overline{B}=cl_{\mathbb{L}}(B)$. Then we need to show that $\overline{A}+\overline{B}= \overline{A+B}$.

Step 1.
For any given $x\in \overline{A}+\overline{B}$, there exists $a\in \overline{A}$ and $b\in \overline{B}$ such that $x=a+b$.
Moreover, there exist $\left\{a_n\right\} \subset A$ and $\left\{b_n\right\} \subset B$ such that $a_n\to a$ and $b_n \to b$. Thus, $a_n+b_n\to a+b$. Since $a_n+b_n \in A+B$, we have $x=a+b\in \overline{A+B}$, which implies $\overline{A}+\overline{B} \subset \overline{A+B}$.

Step 2.
For any given  $x\in \overline{A+B}$, there exists $\{x_n\}\subset A+B$ such that $x_n \to x$. This means that, for any given $\varepsilon >0$, there exists a positive integer $N_0>0$ such that $\|x_n-x_m\|_c<\varepsilon$ when $n,m>N_0$.  Since $x_n\in A+B$, there exist $a_n=\int_0^t f^n(s)dW_s \in A$ and $b_n=\int_0^t g^n(s)ds \in B$ such that $x_n=a_n+b_n$ such that
$$\int_0^t f^n(s)dW_s + \int_0^t g^n(s)ds \to x.$$
If there is no limit for $\{a_n\}$, then that exists a $\varepsilon_0>0$ such that, for any $N>1$, there exists $n,m >N$ satisfying $\|a_n-a_m\|_c\geqslant \varepsilon_0$. This means that
$$E\left[\int_0^T\left(\int_0^t(f^n-f^m)dW_s\right)^2dt\right] \geqslant \varepsilon^2_0.$$
Thus, when $n,m>N_0$, it follows from Lemma \ref{itoi} that
\begin{align*}
\|x_n-x_m\|^2_c&=E\left[\int_0^T\left(\int_0^t(f^n-f^m)dW_s+\int_0^t(g^n-g^m)ds\right)^2dt\right]\\
&=E\left[\left(\int_0^T\left(\int_0^t(f^n-f^m)ds\right)^2+\left(\int_0^t(g^n-g^m)ds\right)^2dt\right)\right]\geqslant \varepsilon^2_0,
\end{align*}
which is a contradiction with the fact $\|x_n-x_m\|_c<\varepsilon$.  Therefore, we know that there exists an $a\in \overline{A}$ such that $a_n \to a$. Similarly,   there exists a $b\in \overline{B}$ such that $b_n\to b$. Thus,  we have $a+b=x\in \overline{A}+\overline{B}$ and so $ \overline{A+B} \subset \overline{A}+\overline{B}$.

Combining Steps 1 and 2, we obtain $\overline{A}+\overline{B}= \overline{A+B}$.
\end{proof}

\begin{remark}\label{r3.1}
Let $X$ be a Banach space.  Then $\overline{A}+\overline{B} \subset \overline{A+B}$ always holds for any $ A, B \subset X$.
Especially, $\overline{{a}}+\overline{B}=\overline{{a}+B}$ holds for any $a\in X$ and $B\subset X$.
\end{remark}

In order to obtain the set-valued It\^{o}'s formula, we need the following assumption.
\begin{assumption}\label{am3.1}
Assume that $Y_1,Y_2$ are two topological spaces and $\varphi: Y_1\rightarrow Y_2$ is a continuous mapping such that, for any bounded subset $A$ in $Y_1$,
$$\varphi(cl_{Y_1}(A))=cl_{Y_2}(\phi(A)),$$
where $\varphi(A)=\cup_{a\in A}\varphi(a)$.
\end{assumption}

\begin{remark}
\begin{enumerate}[($\romannumeral1$)]
\item If $\varphi: \mathbb{R}^n\to \mathbb{R}^n$ is continuous,  then it is easy to check that $\varphi$ satisfies Assumption \ref{am3.1};
\item Any continuous closed mapping from a Banach space to another one satisfies Assumption \ref{am3.1};
\item The mapping $\varphi: L^2_{ad}([0,T] \times \Omega ,\mathbb{R}^n)\to L^2_{ad}([0,T] \times \Omega ,\mathbb{R}^n)$ defined by $$\varphi(x)=(x^2_1,x^2_2,\cdots,x^2_n), \quad \forall x=(x_1,x_2,\cdots,x_n)\in L^2_{ad}([0,T] \times \Omega ,\mathbb{R}^n)$$
satisfies Assumption \ref{am3.1}.
\end{enumerate}
\end{remark}

Now we are able to derive the set-valued It\^{o}'s formula.
\begin{thm}\label{Ito}
Let $X_t$ be the set-valued It\^{o} process defined by \eqref{e3.1}. If $\phi(t,x)$ is a continuous function satisfying Assumption \ref{am3.1} with continuous partial derivatives $\frac{\partial \phi}{\partial t}$, $\frac{\partial \phi}{\partial x}$ and $\frac{\partial^2 \phi}{\partial x^2}$ for every $x \in X_t^{i}$, then $\phi(t,X_t)$ is a set-valued It\^{o} process and
\begin{align*}
\phi(t,X(t))_k=&\phi(0,x_{0})_k+\int_0^t\sum\limits^n_{i=1}\frac{\partial \phi_k}{\partial x_i}(s,X(s))f_i(s)dW_s\\
&+\int_0^t \left[\frac{\partial \phi_k}{\partial t}(s,X(s))+\sum\limits^n_{i=1} \frac{\partial \phi_k}{\partial x_i}(s,X(s))g_i(s)+\frac{1}{2} \sum\limits_{i=1}^n \frac{\partial^2 \phi_k}{\partial x^2_i}(s,X(s))f^2_i\right]ds,\\
& k=1,\cdots,n \quad \quad \mathbb{P}\mbox{-}a.s.,
\end{align*}
where $\phi(t,x): T\times L^2_{ad}([0,T] \times \Omega ,\mathbb{R}^n)\rightarrow  L^2_{ad}([0,T] \times \Omega ,\mathbb{R}^n)$,
$\phi(t,X(t))=\left\{\phi(t,x(t)):x(t)\in X(t)\right\}$,
$\frac{\partial \phi_k}{\partial t}(s,X(t))=\left\{\frac{\partial \phi_k}{\partial t}(s,x(t)): x(t)\in X(t)\right\}$,
$\frac{\partial \phi_k}{\partial x_i}(s,X(t))=\left\{\frac{\partial \phi_k}{\partial x_i}(s,x(t)):x(t)\in X(t)\right\}$,
$\frac{\partial^2 \phi_k}{\partial x^2_i}(s,X(t))=\left\{\frac{\partial^2 \phi_k}{\partial x^2_i}(s,x(t)):x(t)\in X(t)\right\}$ and $f^2_i=\left\{\tilde{f}_i^T\tilde{f}_i: \tilde{f}_i\in f_i\right\}$.
\end{thm}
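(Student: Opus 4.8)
The plan is to lift the classical vector-valued It\^o formula to the set-valued setting one selection at a time, and then pass to closures, using Assumption \ref{am3.1} to commute $\phi$ with the closure operation and Lemma \ref{lm3.3} to split closures of sums. First I would represent $X_t$ through its generating selections. Since $\mathbb{P}$ is nonatomic, Lemma \ref{7} applies and gives sequences $\{\phi^n\}\subset S^2_{\mathbb{F}}(f)$ and $\{\psi^m\}\subset S^2_{\mathbb{F}}(g)$ with $\int_0^t f(s)dW_s=cl_{\mathbb{L}}\{\int_0^t\phi^n_s dW_s\}$ and $\int_0^t g(s)ds=cl_{\mathbb{L}}\{\int_0^t\psi^m_s ds\}$. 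Merging these two closures with Lemma \ref{lm3.3} and absorbing the point $x_0$ via Remark \ref{r3.1}, I obtain
$$X_t=cl_{\mathbb{L}}\Big\{\,x_0+\int_0^t\phi^n_s dW_s+\int_0^t\psi^m_s ds:n,m\geq 1\Big\},$$
so that $X_t$ is exhibited as the $\mathbb{L}$-closure of a doubly indexed family of ordinary $\mathbb{R}^n$-valued It\^o processes $x^{n,m}_t:=x_0+\int_0^t\phi^n_s dW_s+\int_0^t\psi^m_s ds$.

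Each $x^{n,m}$ is a genuine continuous semimartingale whose components satisfy \eqref{e3.2}, so for every fixed pair $(n,m)$ I can apply the classical multidimensional It\^o formula to the scalar function $\phi_k(t,\cdot)$, whose hypotheses hold because $\phi$ has continuous $\frac{\partial\phi}{\partial t}$, $\frac{\partial\phi}{\partial x}$ and $\frac{\partial^2\phi}{\partial x^2}$. This yields, for each component $k$,
$$\phi_k(t,x^{n,m}_t)=\phi_k(0,x_0)+\int_0^t\sum_{i=1}^n\frac{\partial\phi_k}{\partial x_i}(s,x^{n,m}_s)\,\phi^n_i(s)\,dW_s+\int_0^t\Big[\,\cdots\,\Big]ds,$$
where the drift bracket is precisely the integrand displayed in the statement evaluated along the selection $x^{n,m}$, with the quadratic-variation term producing the $\tfrac12\sum_i\frac{\partial^2\phi_k}{\partial x_i^2}f_i^2$ contribution.

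Finally I would pass to closures. The selection family is bounded in $\mathbb{L}$ by the square-integrable boundedness of $X$, so Assumption \ref{am3.1} applied to the continuous map $\varphi=\phi(t,\cdot)$ commutes $\phi$ past the closure, giving $\phi(t,X_t)=cl_{\mathbb{L}}\{\phi(t,x^{n,m}_t)\}$. Splitting this closure of sums back into a sum of closures by Lemma \ref{lm3.3} and Remark \ref{r3.1}, and matching each resulting closure against the set-valued stochastic integral of the corresponding integrand, would produce the asserted formula componentwise; the isometry of Lemma \ref{itoi} is used throughout to control the $dW_s$-parts in the $\mathbb{L}$-norm.

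The main obstacle I anticipate is exactly this last identification step, where the product structure bites. The set-valued integrand $\frac{\partial\phi_k}{\partial x_i}(s,X_s)f_i(s)$ evaluates at each $(s,\omega)$ to the set of all products $\frac{\partial\phi_k}{\partial x_i}(s,x)\tilde f_i$ with $x\in X_s$ and $\tilde f_i\in f_i(s)$ ranging \emph{independently}, whereas the scalar It\^o formula supplies only the correlated ``diagonal'' selections $\frac{\partial\phi_k}{\partial x_i}(s,x^{n,m}_s)\,\phi^n_i(s)$, in which the evaluation point and the integrand factor are built from the same selection $\phi^n$. I will therefore have to show that, after taking decomposable and topological closures, these diagonal selection-integrals are dense in the full set-valued integral; this is where I expect to invoke the nonatomicity of $\mathbb{P}$ together with the Kuratowski--Ryll-Nardzewski selection theorem to approximate an arbitrary measurable selection of the product integrand by diagonal ones. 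A subsidiary point that must also be verified is that $\frac{\partial\phi_k}{\partial x_i}(s,X_s)f_i(s)$ and the two drift integrands indeed lie in $\mathcal{L}^2_{ad}$, which I would deduce from continuity of the derivatives together with the compactness of $X_s(\omega)$ and the square-integrable boundedness of $X$.
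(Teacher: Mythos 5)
Your overall architecture coincides with the paper's: represent $X_t$ via Lemma \ref{7} as the $cl_{\mathbb{L}}$-closure of the doubly indexed family of single-valued It\^o processes $x^{d_1d_2}$, apply the classical It\^o formula to each, and then push the closure through $\phi$ with Assumption \ref{am3.1} and through the sums with Lemma \ref{lm3.3} and Remark \ref{r3.1}. The one place where you diverge is exactly the step you flag as the main obstacle---identifying the closure of the ``diagonal'' integrals $\int_0^t\sum_i\frac{\partial\phi_k}{\partial x_i}(s,x^{d_1d_2}(s))f_i^{d_1}(s)\,dW_s$ with the set-valued integral of the ``product'' integrand $\frac{\partial\phi_k}{\partial x_i}(s,X(s))f_i(s)$. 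You propose to attack this by a density argument based on nonatomicity and the Kuratowski--Ryll-Nardzewski theorem, but leave it unexecuted. The paper instead resolves it by two further applications of Lemma \ref{7}: for fixed $f^{d_1}$ it represents $\int_0^t\sum_i\frac{\partial\phi_k}{\partial x_i}(s,X(s))f_i^{d_1}(s)\,dW_s$ as a closure over selections $x^{d_3}\in S^2_{\mathbb{F}}(X)$ (equation \eqref{e3.5}), and for fixed $x^{d_3}$ it represents the integral with $f_i$ set-valued as a closure over the $f^{d_1}$ (equation \eqref{e3.6}); combining these gives the full integral as the closure of the independently indexed family $M$ in \eqref{e3.7}, and the identity $cl_{\mathbb{L}}M=cl_{\mathbb{L}}N$ with the diagonal family $N$ is then obtained by approximating each selection $x_q\in S^2_{\mathbb{F}}(X)$ by elements $x_{ql}$ of the diagonal family while holding the paired $f_q$ fixed and invoking continuity of $\frac{\partial\phi_k}{\partial x_i}$. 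So your plan is viable in outline, but the decisive decoupling step is carried out in the paper by iterated use of the representation lemma plus a continuity argument rather than by the selection-theoretic density argument you anticipate; if you pursue your own route you must still show that the diagonal selections generate the same closed decomposable hull as the independent ones, which is precisely what the paper's $cl_{\mathbb{L}}M=cl_{\mathbb{L}}N$ computation supplies.
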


\begin{proof}
Since $f(t)$ and $g(t)$ are convex-valued, square integrably bounded, by Lemma \ref{7}, there exist two sequences $\left\{f^{d_1}(s)\right\}_{d_1=1}^{\infty}\subset S^2_{\mathbb{F}}(f)$ and $\left\{g^{d_2}(s)\right\}_{d_2=1}^{\infty}\subset S^2_{\mathbb{F}}(g)$ such that
\begin{align}\label{f}
\int_0^tf(s)dW_s=cl_{\mathbb{L}}\left\{\left(\int_0^tf^{d_1}(s)dW_s\right)(\omega):d_1\geqslant 1\right\} \; \mbox{a.e.}\; \quad \quad \mathbb{P}\mbox{-}a.s..
\end{align}
and
\begin{align*}
\int_0^tg(s)ds=cl_{\mathbb{L}}\left\{\left(\int_0^tg^{d_2}(s)ds\right)(\omega):d_2\geqslant 1\right\} \; \mbox{a.e.}\; \quad \quad \mathbb{P}\mbox{-}a.s..
\end{align*}
It follows from \eqref{e3.1} and Lemma \ref{lm3.3} that
$$X(t)=x_{0}+cl_{\mathbb{L}}\left\{\left(\int_0^tf^{d_1}(s)dW_s\right)(\omega)+\left(\int_0^tg^{d_2}(s)ds\right)(\omega):d_1, d_2 \geqslant 1\right\} \mbox{a.e.}\; \quad \quad \mathbb{P}\mbox{-}a.s..$$
For any given $d_1, d_2\geqslant 1$, let
$$x^{d_1d_2}(t)=x_{0}+\int_0^tf^{d_1}(s)dW_s+\int_0^tg^{d_2}(s)ds \; \mbox{a.e.}\; \quad \quad \mathbb{P}\mbox{-}a.s..$$
Then $x^{d_1d_2}(t)$ is a single-valued It\^{o}'s process and so the classical It\^{o}'s formula shows that
\begin{align}\label{e3.3}
&\quad\;\phi(t,x^{d_1d_2}(t))_k\nonumber\\
&=\phi(0,x_{0})_k+\int_0^t\sum\limits^n_{i=1}\frac{\partial \phi_k}{\partial x_i}(s,x^{d_1d_2}(s))f^{d_1}_i(s)dW_s\nonumber\\
&\quad \mbox{} +\int_0^t \left[\frac{\partial \phi_k}{\partial t}(s,x^{d_1d_2}(s))+\sum\limits^n_{i=1} \frac{\partial \phi_k}{\partial x_i}(s,x^{d_1d_2}(s))g^{d_2}_i(s)+\frac{1}{2} \sum\limits_{i=1}^n \frac{\partial^2 \phi_k}{\partial x^2_i}(s,x^{d_1d_2}(s))(f^{d_1}_i)^2\right]ds
\end{align}
for $k=1,\cdots,n$ and $\mathbb{P}\mbox{-}a.s.$. It follows from Lemma \ref{lm3.3} and Remark \ref{r3.1} that
\begin{align*}
cl_{\mathbb{L}}\left\{x^{d_1d_2}(t): d_1\geqslant 1, d_2\geqslant 1\right\}
=&x_{0}+cl_{\mathbb{L}}\left\{\int_0^tf^{d_1}(s)dW_s+\int_0^tg^{d_2}(s)ds: d_1 \geqslant 1, d_2 \geqslant 1\right\}\\
=&x_{0}+cl_{\mathbb{L}}\left\{\int_0^tf^{d_1}(s)dW_s: d_1 \geqslant 1,\right\}+cl_{\mathbb{L}}\left\{\int_0^tg^{d_2}(s)ds:d_2 \geqslant 1\right\}\\
=&X.
\end{align*}
Let $A=\left\{x^{d_1d_2}(t): d_1\geqslant 1, d_2\geqslant 1\right\}$. Then $\overline{A}=cl_{\mathbb{L}}\left\{x^{d_1d_2}(t): d_1\geqslant 1, d_2\geqslant 1\right\}$
and so Assumption \ref{am3.1} yields
\begin{align}\label{a}
\phi(t,X(t))=\phi(t,\overline{A})=\overline{\phi(t,A)}.
\end{align}
From \eqref{e3.3}, Lemma \ref{lm3.3} and Remark \ref{r3.1}, one has
\begin{align}\label{e3.4}
&\quad\;\phi(t,X(t))_k\nonumber\\
&=x_{0}+cl_{\mathbb{L}} \left\{\int_0^t\sum\limits^n_{i=1}\frac{\partial \phi_k}{\partial x_i}(s,x^{d_1d_2}(s))f^{d_1}_i(s)dW_s: d_1, d_2 \geqslant 1\right\}\nonumber\\
&+cl_{\mathbb{L}} \left\{ \int_0^t \left[\frac{\partial \phi_k}{\partial t}(s,x^{d_1d_2}(s))+\sum\limits^n_{i=1} \frac{\partial \phi_k}{\partial x_i}(s,x^{d_1d_2}(s))g^{d_2}_i(s)+\frac{1}{2} \sum\limits_{i=1}^n \frac{\partial^2 \phi_k}{\partial x^2_i}(s,x^{d_1d_2}(s))(f^{d_1}_i)^2\right]ds:d_1, d_2 \geqslant 1\right\}.
\end{align}
For fixed $f^{d_1}\in S^2_{\mathbb{F}}(f)$, by Lemma \ref{7}, there exists a sequence $\left\{x^{d_3}\right\}\subset S^2_{\mathbb{F}}(X)$ such that
\begin{align}\label{e3.5}
\int_0^t\sum\limits^n_{i=1}\frac{\partial \phi_k}{\partial x_i}(s,X(s))f^{d_1}_i(s)dW_s=cl_{\mathbb{L}}\left\{\int_0^t\sum\limits^n_{i=1}\frac{\partial \phi_k}{\partial x_i}(s,x^{d_3}(s))f^{d_1}_i(s)dW_s, d_3 \geqslant 1\right\}.
\end{align}
For fixed $x^{d_3}\in S^2_{\mathbb{F}}(X)$, it follows from Lemma \ref{7} and \eqref{f} that there exists a sequence $\left\{f^{d_1}\right\}\subset S^2_{\mathbb{F}}(f)$ such that
\begin{align}\label{e3.6}
\int_0^t\sum\limits^n_{i=1}\frac{\partial \phi_k}{\partial x_i}(s,x^{d_3})f_i(s)dW_s=cl_{\mathbb{L}}\left\{\int_0^t\sum\limits^n_{i=1}\frac{\partial \phi_k}{\partial x_i}(s,x^{d_3}(s))f^{d_1}_i(s)dW_s, d_1 \geqslant 1\right\}.
\end{align}
Combining \eqref{e3.5} and \eqref{e3.6}, we have
\begin{align}\label{e3.7}
\int_0^t\sum\limits^n_{i=1}\frac{\partial \phi_k}{\partial x_i}(s,X)f_i(s)dW_s=cl_{\mathbb{L}}\left\{\int_0^t\sum\limits^n_{i=1}\frac{\partial \phi_k}{\partial x_i}(s,x^{d_3}(s))f^{d_1}_i(s)dW_s, d_1,d_3 \geqslant 1\right\}.
\end{align}
Let
$$M=\left\{\int_0^t\sum\limits^n_{i=1}\frac{\partial \phi_k}{\partial x_i}(s,x^{d_3}(s))f^{d_1}_i(s)dW_s, d_1,d_3 \geqslant 1\right\}
$$
and
$$N=\left\{\int_0^t\sum\limits^n_{i=1}\frac{\partial \phi_k}{\partial x_i}(s,x^{d_1d_2}(s))f^{d_1}_i(s)dW_s, d_1,d_2 \geqslant 1\right\}.$$
Since $x^{d_3}\in S^2_{\mathbb{F}}(X)=S^2_{\mathbb{F}}(cl_{\mathbb{L}}{A})$, we can choose $d_3=d_1d_2$ and so $cl_{\mathbb{L}}{N}\subset cl_{\mathbb{L}}{M}$. On the other hand, for any $a_0 \in cl_{\mathbb{L}}{M}$, there exists a sequence  $\left\{a_q\right\}\subset M$ such that $a_q\to a_0$. Since $a_q\in M$, there exist $x_q\in \left\{x^d_3\right\}_{d_3=1}^\infty \subset S^2_{\mathbb{F}}(cl_{\mathbb{L}}{A})$ and $f_q\in \left\{f^d_1\right\}_{d_1=1}^\infty \subset S^2_{\mathbb{F}}(f)$ such that
$$a_q=\int_0^t\sum\limits^n_{i=1}\frac{\partial \phi_k}{\partial x_i}(s,x_q(s))(f_q)_i(s)dW_s.$$
For fixed $x_q\in S^2_{\mathbb{F}}(cl_{\mathbb{L}}{A})$, there exists a sequence  $\left\{x_{ql}\right\}\subset A$ such that $x_{ql}\to x_q$. Letting $f_{ql}=f_q$, we have
$$\lim\limits_{ql\rightarrow +\infty}\int_0^t\sum\limits^n_{i=1}\frac{\partial \phi_k}{\partial x_i}(s,x_{ql}(s))(f_{ql})_i(s)dW_s=a_0,$$
which implies $cl_{\mathbb{L}}{M}\subset cl_{\mathbb{L}}{N}$. Thus, we have $cl_{\mathbb{L}}{M}=cl_{\mathbb{L}}{N}$ and so
\begin{align}\label{e3.8}
\int_0^t\sum\limits^n_{i=1}\frac{\partial \phi_k}{\partial x_i}(s,X)f_i(s)dW_s=cl_{\mathbb{L}}\left\{\int_0^t\sum\limits^n_{i=1}\frac{\partial \phi_k}{\partial x_i}(s,x^{d_1d_2}(s))f^{d_1}_i(s)dW_s, d_1,d_2 \geqslant 1\right\}.
\end{align}

Similarly, we can obtain the following equality
\begin{align}\label{e3.9}
&\int_0^t \left[\frac{\partial \phi_k}{\partial t}(s,X(s))+\sum\limits^n_{i=1} \frac{\partial \phi_k}{\partial x_i}(s,X(s))g_i(s)+\frac{1}{2} \sum\limits_{i=1}^n \frac{\partial^2 \phi_k}{\partial x^2_i}(s,X(s))f^2_i\right]ds\nonumber\\
&=cl_{\mathbb{L}}\left\{ \int_0^t \left[\frac{\partial \phi_k}{\partial t}(s,x^{d_1d_2})+\sum\limits^n_{i=1} \frac{\partial \phi_k}{\partial x_i}(s,x^{d_1d_2})(g^{d_2})_i(s)+\frac{1}{2} \sum\limits_{i=1}^n \frac{\partial^2 \phi_k}{\partial x^2_i}(s,x^{d_1d_2})(f^{d_1}_i)^2\right]ds,d_1,d_2\geqslant 1 \right\}.
\end{align}
Combining \eqref{a}, \eqref{e3.8}, \eqref{e3.9}, we have
\begin{align*}
\phi(t,X(t))_k
=&\phi(0,X(0))_k+\int_0^t\sum\limits^n_i\frac{\partial \phi_k}{\partial x_i}(s,X(s))f_i(s)dW_s\\
&+\int_0^t \left[\frac{\partial \phi_k}{\partial t}(s,X(s))+\sum\limits^n_i \frac{\partial \phi_k}{\partial x_i}(s,X(s))g_i(s)+\frac{1}{2} \sum\limits_{i=1}^n \frac{\partial^2 \phi_k}{\partial x^2_i}(s,X(s))f^2_i\right]ds
  \quad \mathbb{P}\mbox{-}a.s.
\end{align*}
for $k=1,\cdots,n$. This completes the proof.
\end{proof}

\begin{remark}
Theorem \ref{Ito} is a set-valued version of the classical It\^{o}'s formula.
\end{remark}

\begin{thm}\label{cor3.1}
Let $X(t)=x_T+\int_t^Tf(s,Z(s))ds\ominus \int_t^TZ(s)dW_s$ and $Z\in\mathbb{G}$.
Then
\begin{align*}
X^2_k+\int_t^TZ_k^2(s)ds\subset (x^2_T)_k+2\int_t^TX_kf_k(s,Z(s))ds-2\int_t^TX_kZ_k(s)dW_s,\, k=1,\cdots,n \quad \mathbb{P}\mbox{-}a.s..
\end{align*}
\end{thm}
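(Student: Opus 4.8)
The plan is to obtain the stated relation as an application of the set-valued It\^o's formula (Theorem \ref{Ito}) with the squaring map $\phi(x)=(x_1^2,\dots,x_n^2)$, which satisfies Assumption \ref{am3.1} by the Remark following it. First I would remove the Hukuhara difference from the definition of $X$. By Definition \ref{d2.1}, the identity
\begin{align*}
X(t)+\int_t^T Z(s)\,dW_s=x_T+\int_t^T f(s,Z(s))\,ds
\end{align*}
holds, so that $X$ is, up to the additive stochastic term $\int_t^T Z\,dW_s$, a genuine set-valued It\^o process. The integral decomposition identities of Lemmas \ref{42} and \ref{l2.11} then let me rewrite all the $\int_t^T$ integrals through forward integrals $\int_0^t$ and terminal values, putting $X$ into a form to which Theorem \ref{Ito} applies.

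Next I would run the selection-wise computation that underlies Theorem \ref{Ito}. Using the representation Lemma \ref{7}, I would express $\int_t^T f(s,Z(s))\,ds$ and $\int_t^T Z(s)\,dW_s$ as closures of the families of single-valued integrals determined by selections $\tilde f\in S^2_{\mathbb F}(f(\cdot,Z(\cdot)))$, $z\in S^2_{\mathbb F}(Z)$ and $\xi\in S^*_T(x_T)$. For each such consistent triple the scalar process $x(t)=\xi+\int_t^T\tilde f(s)\,ds-\int_t^T z(s)\,dW_s$ is a classical (backward) It\^o process with $dx=-\tilde f\,dt+z\,dW$, and the ordinary It\^o formula applied to $x_k^2$ yields
\begin{align*}
x_k^2(t)+\int_t^T z_k^2(s)\,ds=\xi_k^2+2\int_t^T x_k(s)\tilde f_k(s)\,ds-2\int_t^T x_k(s)z_k(s)\,dW_s.
\end{align*}
I would then aggregate these identities over all selections by taking unions and closures, exactly as in the proof of Theorem \ref{Ito}, invoking Lemma \ref{lm3.3}, Remark \ref{r3.1} and Assumption \ref{am3.1} for $\phi$, together with Lemma \ref{itoi} and the martingale property $Z\in\mathbb G$ (Remark \ref{r21}) to control the stochastic integrals.

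The main obstacle is the passage from the selection-wise equalities to the set-valued relation, and in particular the appearance of inclusion rather than equality. Because the squaring map is nonlinear, the set operations only satisfy one-sided relations such as $(A+B)^2\subset A^2+2AB+B^2$, and the closure-of-sums behaviour recorded in Remark \ref{r3.1} is itself one-sided. Thus, when the coupled family $\{x_k^2+\int_t^T z_k^2\,ds\}$ is compared with the Minkowski-type expressions $X_k^2+\int_t^T Z_k^2(s)\,ds$ on the left and $(x_T^2)_k+2\int_t^T X_kf_k(s,Z(s))\,ds-2\int_t^T X_kZ_k(s)\,dW_s$ on the right, the two sides match only up to containment. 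The delicate step will be to show that the closed decomposable hull of the coupled selection family is contained in the right-hand set while still capturing the left-hand object, so that every generator of $X_k^2+\int_t^T Z_k^2\,ds$ is realized by the It\^o identity above and therefore lies in the right-hand side; this is exactly where the Hukuhara-difference selection lemmas (Lemmas \ref{12.1} and \ref{l2.2}) and the consistency of the selections are used, and it is the only place where the argument is genuinely more than a transcription of the single-valued It\^o formula.
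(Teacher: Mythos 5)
Your proposal follows essentially the same route as the paper's proof: replace the Hukuhara difference by the ordinary (Minkowski) combination, represent the set-valued integrals through selections via Lemma \ref{7}, apply the classical It\^o formula to each single-valued selection process $x(t)=\xi+\int_t^T\tilde f\,ds-\int_t^T z\,dW_s$, and then pass to closures with Lemma \ref{lm3.3} and Remark \ref{r3.1}, with the inclusion (rather than equality) arising precisely from the coupling of selections that you identify as the delicate step. The only cosmetic difference is that the paper introduces the auxiliary process $X^*=x_T+\int_t^T f\,ds-\int_t^T Z\,dW_s$ and uses $X\subset X^*$ instead of moving $\int_t^T Z\,dW_s$ to the left-hand side, so your plan is a faithful blueprint of the paper's argument.
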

\begin{proof}
From Lemma \ref{7} and the property of Hukuhara difference, there exist $\left\{z^{d_1}\right\}\in S^2_{\mathbb{F}}(Z)$ such that
\begin{align*}
\int_t^Tf(s,Z)ds\ominus \int_t^TZ_sdW_s\subset \int_t^Tf(s,Z(s))ds- \int_t^TZ(s)dW_s
=cl_{\mathbb{L}}\left\{\int_t^Tf(s,z^{d_1})ds-\int_t^Tz^{d_1}dW_s: d_1 \geqslant 1\right\}.
\end{align*}
Let
$$X^*(t)=x_T+\int_t^Tf(s,Z(s))ds- \int_t^TZ(s)dW_s,$$
$$x^{d_1}=x_T+\int_t^Tf(s,z^{d_1})ds-\int_t^Tz^{d_1}dW_s.$$
Then $cl_{\mathbb{L}}\left\{x^{d_1}:d_1 \geqslant 1\right\}=X^*$ and $x^{d_1}(t)$ is a single-valued It\^{o}'s process. Let $\phi(t,x)=(x^2_1,x^2_2,\cdots, x^2_n)$.
By the classical It\^{o}'s formula, we have
\begin{align*}
(x^{d_1}_k)^2=(x^2_T)_k+2\int_t^Tx^{d_1}_kf_k(s,z^{d_1})-(z^{d_1}_k)^2(s)ds-2\int_t^Tx^{d_1}z^{d_1}_k(s)dW_s.
\end{align*}
Similar to  the proof of Theorem \ref{Ito}, we can obtain
\begin{align*}
&{X^*}^2_k(t)=cl_{\mathbb{L}}\left\{(x^{d_1}_k)^2:d_1\geqslant 1 \right\}\\
=&(x^2_T)_k+cl_{\mathbb{L}}\left\{2\int_t^Tx^{d_1}_kf_k(s,z^{d_1})-(z^{d_1}_k)^2(s)ds:d_1\geqslant 1 \right\}+cl_{\mathbb{L}}\left\{-2\int_t^Tx^{d_1}z^{d_1}_k(s)dW_s:d_1\geqslant 1 \right\}
\end{align*}
and so
\begin{align*}
X^2_k(t)\subset {X^*}^2_k= x^2_T+2\int_t^TX_kf_k(s,Z(s))-Z_k^2(s)ds-2\int_t^TX_kZ_k(s)dW_s,\, k=1,\cdots,n \quad \mathbb{P}\mbox{-}a.s..
\end{align*}
For any given $y\in X^2_k+\int_t^TZ_k^2(s)ds$, there exist $m\in X^2_k$ and $n\in \int_t^TZ_k^2(s)ds$ such that $y=m+n$. Since $m\in X^2_k\subset cl_{\mathbb{L}}\left\{(x^{d_1}_k)^2:d_1\geqslant 1 \right\}$ and $n\in \int_t^TZ_k^2(s)ds=cl_{\mathbb{L}}\left\{\int_t^T(z^{d_1}_k)^2dW_s: d_1 \geqslant 1\right\},$ we know that there exist $\left\{m_j\right\}\subset \left\{(x^{d_1}_k)^2:d_1\geqslant 1 \right\}$ and $\left\{n_j\right\}\subset \left\{\int_t^T(z^{d_1}_k)^2dW_s: d_1 \geqslant 1\right\}$ such that $m_j+n_j\to m+n$. It follows Lemma \ref{lm3.3} and Remark \ref{r3.1} that
\begin{align*}
m+n&\in cl_{\mathbb{L}}\left\{(x^2_T)_k+2\int_t^Tx^{d_1}_kf_k(s,z^{d_1})ds-2\int_t^Tx^{d_1}z^{d_1}_k(s)dW_s:d_1\geqslant 1 \right\}\\
&=(x^2_T)_k+cl_{\mathbb{L}}\left\{2\int_t^Tx^{d_1}_kf_k(s,z^{d_1})ds:d_1\geqslant 1 \right\}+cl_{\mathbb{L}}\left\{2\int_t^Tx^{d_1}z^{d_1}_k(s)dW_s:d_1\geqslant 1 \right\}\\
&=(x^2_T)_k+2\int_t^TX_kf_k(s,Z(s))ds-2\int_t^TX_kZ_k(s)dW_s.
\end{align*}
This shows that
\begin{align*}
X^2_k+\int_t^TZ_k^2(s)ds\subset (x^2_T)_k+2\int_t^TX_kf_k(s,Z(s))ds-2\int_t^TX_kZ_k(s)dW_s,\, k=1,\cdots,n \quad \quad \mathbb{P}\mbox{-}a.s.
\end{align*}
and so the proof is completed.
\end{proof}
Similar as the proof of Theorem \ref{cor3.1}, we can get the following Corollary.
\begin{cor}\label{cor3.2}
Let $X_t=x_T+\int_t^Tf(s,X_s,Z(s))ds\ominus \int_t^TZ(s)dW_s$ and $Z\in\mathbb{G}$.
Then
\begin{align*}
(X^2_t)_k+\int_t^TZ_k^2(s)ds\subset (x^2_T)_k+2\int_t^T(X_s)_kf_k(s,X_s,Z(s))ds-2\int_t^T(X_s)_kZ_k(s)dW_s,\, k=1,\cdots,n, \quad \mathbb{P}\mbox{-}a.s..
\end{align*}
\end{cor}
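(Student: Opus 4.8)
The plan is to follow the argument of Theorem \ref{cor3.1} almost verbatim, the only genuinely new feature being the additional dependence of the driver on the state $X_s$. First I would invoke Lemma \ref{7} to extract a sequence $\{z^{d_1}\}\subset S^2_{\mathbb{F}}(Z)$ realizing the set-valued integral as the closure $cl_{\mathbb{L}}\{\int_t^T z^{d_1}dW_s:d_1\geqslant 1\}$, and then pass from the Hukuhara-difference process $X_t$ to the enlarged (Minkowski-difference) process
\begin{align*}
X^*(t)=x_T+\int_t^Tf(s,X_s,Z(s))ds-\int_t^TZ(s)dW_s,
\end{align*}
using the elementary inclusion $A\ominus B\subset A+(-B)$ (valid whenever the Hukuhara difference exists), exactly as in Theorem \ref{cor3.1}. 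This gives $X_t^2\subset {X^*}^2$ componentwise, so it suffices to establish the identity for $X^*$.

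Next I would fix selectors and set $x^{d_1}=x_T+\int_t^Tf(s,x^{d_1}_s,z^{d_1})ds-\int_t^Tz^{d_1}dW_s$, observing that each $x^{d_1}$ is a single-valued (backward) It\^o process: even though $f$ is now evaluated along $x^{d_1}$ itself, the drift $f_k(s,x^{d_1}_s,z^{d_1})$ and the integrand $-z^{d_1}_k$ remain admissible, so $x^{d_1}$ is still a semimartingale of It\^o type. Applying the classical It\^o formula to $\phi(t,x)=(x_1^2,\dots,x_n^2)$ then yields, for each $k$,
\begin{align*}
(x^{d_1}_k)^2=(x_T^2)_k+2\int_t^Tx^{d_1}_kf_k(s,x^{d_1}_s,z^{d_1})ds-\int_t^T(z^{d_1}_k)^2ds-2\int_t^Tx^{d_1}_kz^{d_1}_k(s)dW_s,
\end{align*}
which is the componentwise analogue of the scalar identity driving Theorem \ref{cor3.1}.

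I would then take closures in $\mathcal{L}^2_{ad}$ and use Lemma \ref{lm3.3} together with Remark \ref{r3.1} to split the closure of the right-hand side into the constant term $(x_T^2)_k$ plus the closures of the drift integral and of the stochastic integral, so that ${X^*}_k^2=cl_{\mathbb{L}}\{(x^{d_1}_k)^2:d_1\geqslant 1\}$ equals the sum. The decoupling step $cl_{\mathbb{L}}M=cl_{\mathbb{L}}N$ of Theorem \ref{cor3.1}, reconciling selectors drawn independently from $X$ and from $f$, carries over because a single path $x^{d_1}$ simultaneously plays the role of the selector inside $f$ and of the process whose square is computed. Finally, moving the $\int_t^T(z^{d_1}_k)^2ds$ term to the left and passing to the set-valued closure once more (Lemma \ref{lm3.3}, Remark \ref{r3.1}) gives
\begin{align*}
(X^2_t)_k+\int_t^TZ_k^2(s)ds\subset (x^2_T)_k+2\int_t^T(X_s)_kf_k(s,X_s,Z(s))ds-2\int_t^T(X_s)_kZ_k(s)dW_s.
\end{align*}

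The main obstacle is the self-referential dependence introduced by $f(s,X_s,Z(s))$: one must verify that the selection machinery of Lemma \ref{7} applies consistently when the state itself enters the driver, i.e. that the same countable family $\{x^{d_1}\}$ generates both the $X_s$ appearing inside $f$ and the squared process $X_s^2$, and that the closure-of-sum interchange remains legitimate for the coupled drift term $\int_t^T (X_s)_k f_k(s,X_s,Z(s))\,ds$. Once this consistency of the selectors is secured, every remaining step is a routine transcription of the proof of Theorem \ref{cor3.1}.
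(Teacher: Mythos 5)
Your proposal is correct and matches the paper's intent exactly: the paper gives no separate argument for this corollary, stating only that it follows ``similar as the proof of Theorem \ref{cor3.1},'' and your adaptation—passing to the Minkowski-difference process $X^*$, applying the classical It\^{o} formula to selector paths $x^{d_1}$, and reassembling via Lemma \ref{lm3.3} and Remark \ref{r3.1}—is precisely that transcription. Your explicit attention to the self-referential selector consistency (the same $x^{d_1}$ feeding both the driver and the squared process) is in fact more careful than anything the paper offers.
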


\section{An application to GSVBSDE}
\noindent \setcounter{equation}{0}

In this section, we apply the set-valued It\^{o}'s formula to obtain the existence and uniqueness of solutions for GSVBSDE \eqref{e1.1}. To this end, we first show the following lemma, which is a set-valued version of the It\^{o} isometry.

\begin{lm}\label{sitoi}
Suppose $f\in \mathcal{L}^2_{ad}([0,T]\times \Omega,\mathcal{P}(\mathbb{R}^m))$. Then the integral $I(f)=\int_0^T f(t)dW(t)$ is a set-valued random variable with $\mathbb{E}[I(f)]=0$ and
\begin{align}\label{sitoi-0}
\mathbb{E}\left[I(t)\right]^2= \mathbb{E} \left[\int_0^Tf^2(t)dt\right].
\end{align}
\end{lm}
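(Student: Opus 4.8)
The plan is to prove both assertions by reducing everything to single-valued selectors, where the classical It\^o isometry (Lemma \ref{itoi}) is available, and then transferring the resulting identities back to the set-valued level through the norm-closure representation of Lemma \ref{7} together with Assumption \ref{am3.1} applied to the squaring map. This mirrors the mechanism already used in the proof of Theorem \ref{Ito}.

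First I would invoke Lemma \ref{7}(i): since $(\Omega,\mathcal{F},\mathbb{P})$ is separable and $f\in\mathcal{L}^2_{ad}([0,T]\times\Omega,\mathcal{P}(\mathbb{R}^m))$ with $S^2_{\mathbb{F}}(f)\neq\emptyset$, there is a sequence $\{f^n\}_{n\geqslant1}\subset S^2_{\mathbb{F}}(f)$ with
$$I(f)=\int_0^T f(t)\,dW_t=cl_{\mathbb{L}}\left\{\int_0^T f^n(t)\,dW_t:n\geqslant1\right\}\quad\mathbb{P}\mbox{-}a.s.$$
This already exhibits $I(f)$ as a closed set-valued random variable, being the closure of a countable family of $\mathbb{R}^m$-valued random variables. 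For the mean, I would use that the set-valued (Aumann) expectation of a square-integrable set-valued random variable is the closure of the expectations of its selectors, and that every selector of $I(f)$ is a limit of integrals $\mathcal{J}(f')$ with $f'\in S^2_{\mathbb{F}}(f)$. By Lemma \ref{itoi} each such integral has zero mean; since $\mathbb{E}[\cdot]$ is linear and continuous on $L^2_{ad}$, both the decomposable-hull and the closure operations preserve this, giving $\mathbb{E}[I(f)]=\{0\}$.

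For the isometry I would apply the squaring map $\phi(x)=x^2$, which satisfies Assumption \ref{am3.1} (as recorded in the Remark following that assumption). Then, exactly as in the proof of Theorem \ref{Ito},
$$I(f)^2=\phi\left(cl_{\mathbb{L}}\{I(f^n):n\geqslant1\}\right)=cl_{\mathbb{L}}\left\{I(f^n)^2:n\geqslant1\right\},$$
and applying the set-valued expectation, again as the closure of the selectors' expectations, yields $\mathbb{E}[I(f)^2]=cl_{\mathbb{L}}\{\mathbb{E}[I(f^n)^2]:n\geqslant1\}$. By the classical isometry of Lemma \ref{itoi}, $\mathbb{E}[I(f^n)^2]=\mathbb{E}[\int_0^T (f^n)^2\,dt]$ for every $n$. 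On the other side, combining Lemma \ref{7}(ii) with the same squaring argument gives $\int_0^T f^2(t)\,dt=cl_{\mathbb{L}}\{\int_0^T (f^n)^2\,dt:n\geqslant1\}$, so that $\mathbb{E}[\int_0^T f^2(t)\,dt]=cl_{\mathbb{L}}\{\mathbb{E}[\int_0^T (f^n)^2\,dt]:n\geqslant1\}$. Matching the two closures term by term establishes \eqref{sitoi-0}.

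I expect the main obstacle to be the two interchanges on which the selection argument rests: (a) that the set-valued expectation commutes both with the norm-closure $cl_{\mathbb{L}}$ and with the squaring map $\phi$, and (b) that the selector family representing $\int_0^T f^2(t)\,dt$ through Lemma \ref{7}(ii) may be taken to be $\{(f^n)^2\}$ rather than some a priori different sequence. For (a) I would rely on the continuity of $\mathbb{E}[\cdot]$ on $L^2_{ad}$ and on Assumption \ref{am3.1}; for (b) I would reproduce the ``$M=N$'' reconciliation of selector families from the proof of Theorem \ref{Ito}, choosing indices so that the two decomposable hulls coincide. The integrable boundedness inherited from $f\in\mathcal{L}^2_{ad}$ guarantees that all the expectations and closures invoked above are well defined.
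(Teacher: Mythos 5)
Your proposal follows essentially the same route as the paper's own proof: represent $I(f)$ via Lemma \ref{7} as the $cl_{\mathbb{L}}$-closure of the single-valued It\^{o} integrals of a countable selector family, apply the classical isometry of Lemma \ref{itoi} selector by selector, and pass the two identities to the set-valued level through the closure. The extra care you take with Assumption \ref{am3.1} for the squaring map and with reconciling the selector family for $\int_0^T f^2(t)\,dt$ (your point (b)) only makes explicit steps the paper treats implicitly, so no further comparison is needed.
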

\begin{proof}
By Lemma \ref{7}, there exists a sequence $\left\{f^d(t)\right\}_{d=1}^{\infty} \subset S_{\mathbb{F}}^2(f)$ such that
\begin{align}\label{sitoi-1}
\int_0^T f(s) dW_s=cl_{\mathbb{L}}\left\{\int_0^T f^d(s) dW_s:d\geqslant 1\right\} \quad \mathbb{P}\mbox{-}a.s.
\end{align}
and
\begin{align}\label{sitoi-2}
\int_0^T f^2(s) ds=cl_{\mathbb{L}}\left\{\int_0^T (f^d(s))^2 ds:d\geqslant 1\right\} \quad \mathbb{P}\mbox{-}a.s..
\end{align}
It follows from \eqref{sitoi-1} that
$$
\mathbb{E}\left[\int_0^T f(s) dW_s\right]=
\mathbb{E}\left[cl_{\mathbb{L}}\left\{\int_0^T f^d(s) dW_s:d\geqslant 1\right\}\right].
$$
For any given $a_0\in cl_{\mathbb{L}}\left\{\int_0^T f^d(s) dW_s:d\geqslant 1\right\}$, there exists a sequence $\left\{f^n(t)\right\}_{n=1}^{\infty} \subset \left\{f^d(t)\right\}_{d=1}^{\infty}$ such that $\int_0^Tf^n(s)ds \to a_0$. Thus, by Lemma \ref{itoi}, we know that $\mathbb{E}\int_0^Tf^n(s)ds=0\to \mathbb{E}a_0$.
This implies that $\mathbb{E}a_0=0$ and so
$$\mathbb{E}[I(f)]=\mathbb{E}\left[\int_0^T f(s) dW_s\right]=
\mathbb{E}\left[cl_{\mathbb{L}}\left\{\int_0^T f^d(s) dW_s:d\geqslant 1\right\}\right]=0.$$

Next we show that \eqref{sitoi-0} holds. In fact, by \eqref{sitoi-1}, we have
$$\mathbb{E}[I(f)]^2=\mathbb{E}\left[cl^2_{\mathbb{L}}\left\{\int_0^T f^d(s) dW_s:d\geqslant 1\right\}\right] \quad \mathbb{P}\mbox{-}a.s.,$$
where
$$cl^2_{\mathbb{L}}\left\{\int_0^T f^d(s) dW_s:d\geqslant 1\right\}=\left\{x^2: x\in cl_{\mathbb{L}}\left\{\int_0^T f^d(s) dW_s:d\geqslant 1\right\}\right\}.$$
For any given $b_0\in cl_{\mathbb{L}}\left\{\int_0^T f^d(s) dW_s:d\geqslant 1\right\}$, there exists a sequence $\left\{f^k(t)\right\}_{k=1}^{\infty} \subset \left\{f^d(t)\right\}_{d=1}^{\infty}$ such $\int_0^Tf^k(s)dW_s \to b_0$.
It follows from Lemma \ref{itoi} that $$\mathbb{E}\left[\left(\int_0^Tf^k(s)dW_s\right)^2\right]=\mathbb{E}\left[\int_0^T(f^k(s))^2ds\right] \to \mathbb{E}[b^2_0].$$
This leads to
$$\mathbb{E}[I(f)]^2=\mathbb{E}\left[cl_{\mathbb{L}}\left\{\int_0^T (f^d(s))^2 ds:d\geqslant 1\right\}\right].$$
Thus, by \eqref{sitoi-2}, we know that  \eqref{sitoi-0} holds.
\end{proof}

Recall that, for $A\in \mathcal{K}(\mathbb{X})$ and $ a\in A$, $a$ is called an extreme point of $A$ if it can not be written as a strict convex combination of two points in $A$, that is for every $y_1, y_2 \in A$ and $\lambda \in (0,1)$, we have $a\neq \lambda y_1+ (1-\lambda)y_2$. Then, we denote $ext(A)$ to be the set of all extreme points of $A$.

\begin{lm} \cite{Ararat2020Set-valued} \label{hd}
Let $A,B\in \mathcal{K}(\mathbb{X})$. The Hukuhara differential $A\ominus B$ exists if and only if for every $a\in ext(A)$, there exists $x\in \mathbb{X}$ such that $a\in x+B$ and $ x+B\subseteq A$. In this case, $A\ominus B$ is unique, closed, convex, and
$$A\ominus B=\left\{x\in \mathbb{X}|x+B \subseteq A\right\}.$$
\end{lm}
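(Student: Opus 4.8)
The plan is to introduce the candidate difference
$$D := \left\{x \in \mathbb{X} : x + B \subseteq A\right\},$$
prove it is the correct object, and then handle the two implications separately. First I would record the structural properties of $D$. It is convex: if $x_1+B\subseteq A$ and $x_2+B\subseteq A$, then for $\lambda\in[0,1]$ one has $(\lambda x_1+(1-\lambda)x_2)+B=\lambda(x_1+B)+(1-\lambda)(x_2+B)\subseteq A$, using convexity of both $A$ and $B$. It is closed: if $x_n\to x$ with $x_n+B\subseteq A$, then for each $b\in B$, $x+b=\lim(x_n+b)\in A$ since $A$ is closed. It is bounded: fixing any $b_0\in B$, every $x\in D$ satisfies $x+b_0\in A$, so $x\in A-b_0$, a bounded set. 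As a closed subset of the compact set $A-b_0$, $D$ is therefore compact, so $D\in\mathcal{K}(\mathbb{X})$ whenever it is nonempty.

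The forward implication is immediate and uses no extreme-point structure. Suppose $A\ominus B=C$ exists, i.e.\ $A=B+C$ with $C\in\mathcal{K}(\mathbb{X})$. Then every $a\in A$ — in particular every $a\in ext(A)$ — can be written $a=b+c$ with $b\in B$, $c\in C$; consequently $a\in c+B$ and $c+B\subseteq B+C=A$, so the stated condition holds with $x=c$.

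The substantive direction is the converse. Assume the condition holds at every $a\in ext(A)$; then each such $a$ produces a point $x\in D$, so $D\neq\emptyset$, $D\in\mathcal{K}(\mathbb{X})$, and $B+D$ is compact and convex as the sum of two compact convex sets. The inclusion $B+D\subseteq A$ is automatic from the definition of $D$. For the reverse inclusion I would invoke the Krein--Milman theorem: since $A$ is a nonempty compact convex subset of the Hilbert space $\mathbb{X}$, we have $A=\overline{\mathrm{conv}}(ext(A))$. By hypothesis each $a\in ext(A)$ lies in $x+B\subseteq B+D$, so $ext(A)\subseteq B+D$; because $B+D$ is closed and convex, $\overline{\mathrm{conv}}(ext(A))\subseteq B+D$, whence $A\subseteq B+D$. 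Combining the two inclusions gives $A=B+D$, so $A\ominus B=D$ exists. I expect this to be the main obstacle: the whole point is to lift the hypothesis from the extreme points to all of $A$, and it is precisely here that compactness of $A$ is indispensable (both to apply Krein--Milman and to guarantee that $B+D$ is closed).

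Finally, for the ``in this case'' assertions, I would establish the explicit formula and uniqueness via support functions. Writing $s_K(p)=\sup_{k\in K}\langle p,k\rangle$, the inclusion $x+B\subseteq A=B+C$ is equivalent to $\langle p,x\rangle+s_B(p)\leq s_B(p)+s_C(p)$ for all $p$, i.e.\ $\langle p,x\rangle\leq s_C(p)$ for all $p$, which characterizes membership $x\in C$ (as $C$ is closed and convex). Hence $D=C$, so $A\ominus B=D$ is closed and convex as already shown. The same computation $s_{B+C_1}=s_{B+C_2}\Rightarrow s_{C_1}=s_{C_2}\Rightarrow C_1=C_2$ yields the cancellation law in $\mathbb{X}$ and therefore the uniqueness of the difference.
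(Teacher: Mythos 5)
This lemma is imported verbatim from Ararat, Ma and Wu \cite{Ararat2020Set-valued} and the present paper gives no proof of it, so there is no in-paper argument to compare yours against. Judged on its own, your proof is correct and complete: the forward implication is indeed trivial; the converse is exactly the point where extreme points matter, and your use of Krein--Milman (valid for a norm-compact convex subset of a Hilbert space, which also guarantees $ext(A)\neq\emptyset$ and hence $D\neq\emptyset$) together with the observation that $B+D$ is compact convex, so that $\overline{\mathrm{conv}}(ext(A))\subseteq B+D$, correctly lifts the hypothesis from $ext(A)$ to all of $A$. The support-function computation identifying $D$ with any existing difference $C$, and the cancellation law giving uniqueness, both go through because $s_B$ is finite-valued ($B$ compact) and $C$ is closed convex. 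The only steps worth flagging as load-bearing --- and which you did justify --- are the closedness of $B+D$ (needed before applying the closed-convex-hull characterization) and the compactness of $A$ (needed both for Krein--Milman and for $D$ to land in $\mathcal{K}(\mathbb{X})$).
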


\begin{lm} \label{equal}
Let $X(t), Y(t) \in \mathcal{L}^2_{ad}([0,T] \times \Omega, \mathcal{L}(\mathbb{R}^{n\times m}))$ such that
\begin{align} \label{eq}
\int_0^t X(s) dW_s =\int_0^t Y(s)dW_s, \: \forall t\in[0,T].
\end{align}
Then $X(t)\ominus Y(t)$ exists and $X(t)=Y(t)$ $\mathbb{P}$-a.s..
\end{lm}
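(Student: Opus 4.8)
The plan is to leverage the set-valued stochastic integral machinery already developed, specifically the uniqueness result in Lemma~\ref{l2.11}(iii), together with the characterization of existence of the Hukuhara difference in Lemma~\ref{hd}. The core difficulty is that Lemma~\ref{l2.11}(iii) presupposes that $X \ominus Y$ \emph{exists}; it cannot be invoked directly because existence of the Hukuhara difference is exactly half of what we must prove. So the first order of business is to establish that $X(t) \ominus Y(t)$ exists $\mathbb{P}$-a.s., and only then deduce equality.

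First I would reduce to the representation of the two integrals via selectors. By Lemma~\ref{7}, there are sequences $\{\phi^k\} \subset S^2_{\mathbb{F}}(X)$ and $\{\psi^j\} \subset S^2_{\mathbb{F}}(Y)$ with $(\int_0^t X\,dW_s)(\omega) = cl_{\mathbb{L}}\{(\int_0^t \phi^k\,dW_s)(\omega): k\geqslant 1\}$ and the analogous expression for $Y$. The hypothesis \eqref{eq} asserts these closed sets coincide for every $t$. I would then apply Lemma~\ref{hd} to the two compact convex sets $X(t)$ and $Y(t)$ in $\mathcal{K}(\mathbb{R}^{n\times m})$: to show $X(t)\ominus Y(t)$ exists it suffices to show that for each extreme point $a \in ext(X(t))$ there is $x$ with $a \in x+Y(t)$ and $x+Y(t)\subseteq X(t)$. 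The natural candidate is to take the selector-level integrals: equality of the integral sets forces a matching between the generating selectors of $X$ and those of $Y$, and one translates this into the pointwise containment $x + Y(t) \subseteq X(t)$.

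The main obstacle is precisely this translation from \emph{equality of integrals} to a \emph{pointwise} set relation between $X(t)$ and $Y(t)$ for a.e.\ $(t,\omega)$. The It\^{o} isometry of Lemma~\ref{sitoi} is the key tool: if $X(t)\ominus Y(t)$ existed with value $C(t)$, then $\int_0^t C(s)\,dW_s = \int_0^t X\,dW_s \ominus \int_0^t Y\,dW_s = \{0\}$ by Lemma~\ref{l2.11}(ii) and \eqref{eq}, whence $\mathbb{E}[(\int_0^t C\,dW_s)^2] = \mathbb{E}\int_0^t C^2(s)\,ds = 0$ by Lemma~\ref{sitoi}; since the integrand is a nonnegative set, this forces $C(s) = \{0\}$ a.e., i.e.\ $X=Y$. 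Thus the real work is front-loaded into the existence of $X\ominus Y$, which I would argue by a measurable-selection/approximation scheme: fix a dense countable family of test times and use continuity of $t\mapsto \int_0^t(\cdot)\,dW_s$ together with the decomposability/closure structure of the selector sets to show that the selector matching can be performed consistently in $\omega$, invoking Kuratowski--Ryll-Nardzewski for measurability of the resulting selection.

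Once existence is secured, the conclusion is immediate: set $C = X \ominus Y$, apply Lemma~\ref{l2.11}(ii) to get $\int_0^t C\,dW_s = \{0\}$ for all $t$, and then Lemma~\ref{l2.11}(iii) (or directly the isometry argument above) yields $C = \{0\}$, i.e.\ $X(t) = Y(t)$ $\mathbb{P}$-a.s. I expect the cleanest write-up to invoke Lemma~\ref{l2.11}(iii) with $Z_1 = X$, $Z_2 = Y$ after establishing existence, since that lemma packages exactly the implication ``$\int_0^t Z_1\,dW_s = \int_0^t Z_2\,dW_s$ for all $t$, plus existence of $Z_1\ominus Z_2$, gives $Z_1 = Z_2$.''
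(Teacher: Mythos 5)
Your overall architecture is right, and the second half of your argument (once $X\ominus Y$ is known to exist, use Lemma \ref{l2.11}(ii) and the set-valued It\^{o} isometry of Lemma \ref{sitoi} to force $X\ominus Y=\{0\}$) is essentially what the paper does. The problem is the first half. You correctly identify that existence of the Hukuhara difference is ``the real work,'' but you do not actually do that work: the proposed ``measurable-selection/approximation scheme'' --- matching selectors consistently in $\omega$ over a dense family of times and invoking Kuratowski--Ryll-Nardzewski --- is left entirely unexecuted, and it is not clear it can be executed as described. Equality of the two set-valued integrals is an equality of closed decomposable hulls in $L^2$; it does not by itself yield any pointwise relation between $X(t,\omega)$ and $Y(t,\omega)$, which is what the containment $x+Y(t)\subseteq X(t)$ demanded by Lemma \ref{hd} requires. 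That translation is precisely the step you flag as ``the main obstacle'' and then leave unresolved.

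The missing idea is that the It\^{o} isometry is the tool for the existence step as well, not only for the equality step, and there it is used in the reverse direction: from convergence of integrals back to convergence of integrands. The paper argues by contradiction. If $X(t)\ominus Y(t)$ fails to exist, the negation of Lemma \ref{hd} (specialized to the choice $x=0$) produces an extreme point $a_0(t)\in ext(X(t))$ with $a_0(t)\notin Y(t)$ (or else $Y(t)\nsubseteq X(t)$, handled similarly). Since $a_0$ is a selector of $X$, $\int_0^t a_0\,dW_s$ lies in $\int_0^t X\,dW_s=\int_0^t Y\,dW_s$, which by Lemma \ref{7} is the $L^2$-closure of $\left\{\int_0^t y^d\,dW_s\right\}$ over selectors $y^d$ of $Y$; the scalar isometry (Lemma \ref{itoi}) then upgrades $\int_0^t(y^n-a_0)\,dW_s\to 0$ to $\|y^n-a_0\|_c\to 0$, so $a_0$ is an $L^2$-limit of selectors of $Y$ and, by closedness of $S^2_{\mathbb{F}}(Y)$, itself a selector of $Y$ --- contradicting $a_0\notin Y$. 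This concrete mechanism is absent from your sketch, so as written your proposal has a genuine gap at exactly the point you yourself identified as decisive.
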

\begin{proof}
Assume that $X(t)\ominus Y(t)$ does not exist. Then Lemma \ref{hd} implies that there exists $a_0(t) \in ext(X(t))$ such that, for any $x(t) \in L^2_{ad}([0,T] \times \Omega ,\mathbb{R}^{n\times m})$,  $a_0(t) \notin x(t)+Y(t)$ or $x(t)+Y(t) \nsubseteq X(t)$. Taking $x(t)=0$, one has  $a_0(t) \notin Y(t)$ or $Y(t) \nsubseteq X(t)$.
Suppose that $a_0(t) \notin Y(t)$ holds. Then it follows from $a_0(t) \in ext(X(t))$ and \eqref{eq} that
$$
\int_0^t a_0(s) dW_s \in \int_0^t X(s) dW_s=\int_0^t Y(s)dW_s, \: \forall t\in[0,T].
$$
From Lemma \ref{7}, there exists a sequence $\left\{y^d(t)\right\}_{d=1}^{\infty} \subset S_{\mathbb{F}}^2(Y(t))$ such that
\begin{align}\label{equal-1}
\int_0^T Y(s) dW_s=cl_{\mathbb{L}}\left\{\int_0^T y^d(s) dW_s:d\geqslant 1\right\} \quad \mathbb{P}\mbox{-}a.s.
\end{align}
This leads that there exists a sequence$\left\{y^n(t)\right\}_{n=1}^{\infty} \subset \left\{y^d(t)\right\}_{d=1}^{\infty}$ such that  $\int_0^t (y^n - a_0)dW_s \to 0$. It follows from Lemma \ref{itoi} that $\mathbb{E}[\int_0^T (y^n - a_0)^2ds]= \| y^n(s) - a_0 \|^2_c \to 0$, which implies $y^n(s) \to a_0$ $\mathbb{P}$-a.s.. Since $S_{\mathbb{F}}^2(Y(t))$ is closed, we have $a_0 \in S_{\mathbb{F}}^2(Y(t))\subset Y(t)$.
This in contradiction with $a_0(t) \notin Y(t)$. Similarly, when $Y(t) \nsubseteq X(t)$, we can obtain a contradiction.  Thus, we know that $X(t)\ominus Y(t)$ exists and so   \eqref{eq} implies that
$$\int_0^t X(s)\ominus  Y(s)dW_s=0, \: \forall t\in[0,T].$$
By Lemma \ref{sitoi}, one has
$$\mathbb{E}\left[\int_0^t (X(s)\ominus  Y(s))^2ds\right]=0, \: \forall t\in[0,T].$$

Since $X(t),Y(t) \in \mathcal{L}^2_{ad}([0,T] \times \Omega, \mathcal{L}(\mathbb{R}^{n\times m}))$, let $x^0(t)\in X(t)\ominus  Y(t)$ such that $x_0(t)=\|X(t)\ominus  Y(t)\|$, by taking $t=T$, we have
$$\mathbb{E}\left[\int_0^T \|X(s)\ominus  Y(s)\|^2ds\right]=\mathbb{E}\left[\int_0^T x^2_0(t)ds\right]\in \mathbb{E}\left[\int_0^T (X(s)\ominus  Y(s))^2ds\right]=0,$$
which implies that
$$\mathbb{E}\left[\int_0^T \|X(s)\ominus  Y(s)\|^2ds\right]=\|X(s)\ominus  Y(s)\|_s=0$$
and so $X(t)=Y(t)$ $\mathbb{P}$-a.s..
\end{proof}

We also need the following lemma.
\begin{lm}\cite{Ararat2020Set-valued}\label{l4.1}
Let $\xi \in \mathbb{L}_T^2(\Omega,\mathcal{K}(\mathbb{R}^n))$ and $f(t,\omega)\in \mathcal{L}^2_{ad}([0,T]\times \Omega,\mathcal{K}(\mathbb{R}^n))$. Then there exists a unique pair $(X,Z)\in \mathcal{L}^2_{ad}([0,T]\times \Omega,\mathcal{K}(\mathbb{R}^n))\times \mathbb{G}$ such that
$$X(t)=\xi +\int_t^Tf(s,\omega)ds \ominus \int_t^TZ(s)dW_s,  \quad \quad \mathbb{P}\mbox{-}a.s.\quad t\in [0.T].$$
\end{lm}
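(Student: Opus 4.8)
The plan is to reduce the problem to the set-valued martingale representation of Lemma \ref{6}, imitating the classical construction of a BSDE solution in the case where the driver depends on neither unknown. First I would introduce the set-valued martingale
$$M_t:=\mathbb{E}\left[\xi+\int_0^T f(s,\omega)\,ds\,\Big|\,\mathcal{F}_t\right],\qquad t\in[0,T].$$
Since $\xi\in\mathbb{L}_T^2(\Omega,\mathcal{K}(\mathbb{R}^n))$ and $\int_0^T f\,ds\in\mathbb{L}_T^2(\Omega,\mathcal{K}(\mathbb{R}^n))$, their Minkowski sum lies in $\mathbb{L}_T^2(\Omega,\mathcal{K}(\mathbb{R}^n))$, so $M$ is a convex-valued, uniformly square-integrably bounded set-valued martingale in the sense of Definition \ref{d25}. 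Applying Lemma \ref{6} produces $\mathcal{G}^M\in\mathcal{K}_w(\mathcal{L}^2_{ad}([0,T]\times\Omega,\mathbb{R}^{n\times m}))$ with $M_t=M_0+\int_0^t\mathcal{G}^M\,dW_s$, and I set $Z:=\mathcal{G}^M$. Because $M_0$ is $\mathcal{F}_0$-measurable, Lemma \ref{5} shows that $M_t\ominus M_0=\int_0^t Z\,dW_s$ is again an $\mathcal{F}_t$-martingale, so $Z\in\mathbb{G}$.

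The delicate point is to produce $X$ as a genuine Hukuhara difference. Using the additivity of the set-valued conditional expectation together with the decomposition $\int_0^T f\,ds=\int_0^t f\,ds+\int_t^T f\,ds$ from Lemma \ref{42} and the $\mathcal{F}_t$-measurability of $\int_0^t f\,ds$, I would write
$$M_t=\mathbb{E}[\xi\mid\mathcal{F}_t]+\mathbb{E}\left[\int_t^T f\,ds\,\Big|\,\mathcal{F}_t\right]+\int_0^t f\,ds.$$
This exhibits $\int_0^t f\,ds$ as an honest Minkowski summand of $M_t$, so by Lemma \ref{12.1}(iii) the difference $M_t\ominus\int_0^t f\,ds$ exists and equals $\mathbb{E}[\xi\mid\mathcal{F}_t]+\mathbb{E}[\int_t^T f\,ds\mid\mathcal{F}_t]$, which clearly lies in $\mathcal{L}^2_{ad}([0,T]\times\Omega,\mathcal{K}(\mathbb{R}^n))$. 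I define $X(t):=M_t\ominus\int_0^t f\,ds$.

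Next I would verify the equation. From Lemma \ref{42} and Lemma \ref{12.1}(iii) one has $\xi+\int_t^T f\,ds=M_T\ominus\int_0^t f\,ds$, while Lemma \ref{l2.11}(iv) gives $M_T=M_t+\int_t^T Z\,dW_s$. Substituting and peeling off the martingale increment via the cancellation rules of Lemma \ref{12.1}(i)--(iii) yields
$$\left(\xi+\int_t^T f\,ds\right)\ominus\int_t^T Z\,dW_s=\left(M_t\ominus\int_0^t f\,ds\right)+\left(\int_t^T Z\,dW_s\ominus\int_t^T Z\,dW_s\right)=M_t\ominus\int_0^t f\,ds=X(t),$$
so $(X,Z)$ solves the equation, every intermediate Hukuhara difference being guaranteed to exist by the summand structure established in the previous paragraph.

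For uniqueness, suppose $(X,Z)$ is any solution. Adding $\int_0^t f\,ds$ to both sides of $X(t)+\int_t^T Z\,dW_s=\xi+\int_t^T f\,ds$ (Lemma \ref{12.1}(v)) gives $X(t)+\int_0^t f\,ds+\int_t^T Z\,dW_s=\xi+\int_0^T f\,ds=M_T$. Conditioning on $\mathcal{F}_t$, using that $X(t)+\int_0^t f\,ds$ is $\mathcal{F}_t$-measurable while $\mathbb{E}[\int_t^T Z\,dW_s\mid\mathcal{F}_t]=\{0\}$ (which follows from $Z\in\mathbb{G}$ through Lemma \ref{l2.11}(iv) and Lemma \ref{5}), forces $X(t)=M_t\ominus\int_0^t f\,ds$; hence $X$ is uniquely determined. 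Two solutions then share the same $X$, so $\int_t^T Z^1\,dW_s=\int_t^T Z^2\,dW_s$ for all $t$ by cancellation, whence $\int_0^t Z^1\,dW_s=\int_0^t Z^2\,dW_s$ for all $t$ by Lemma \ref{l2.11}(iv), and Lemma \ref{equal} gives $Z^1=Z^2$. The main obstacle throughout is the existence of the Hukuhara differences: unlike Minkowski subtraction, $A\ominus B$ exists only under the containment condition of Lemma \ref{hd}, and the entire construction hinges on the observation that conditioning and the time-additivity of the integral keep $\int_0^t f\,ds$ a literal summand of $M_t$, which is precisely what makes $X(t)=M_t\ominus\int_0^t f\,ds$ well defined.
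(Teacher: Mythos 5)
This lemma is quoted in the paper with a citation to Ararat, Ma and Wu and is given no proof there, so there is no in-paper argument to compare yours against; I can only assess your proposal on its own terms, and it is essentially correct. Your construction is the natural one (and is the same reduction to the set-valued martingale representation theorem that the cited source uses): define $M_t=\mathbb{E}[\xi+\int_0^T f\,ds\mid\mathcal{F}_t]$, extract $Z$ from Lemma \ref{6}, and obtain $X(t)=M_t\ominus\int_0^t f\,ds$, whose existence you correctly justify by exhibiting $\int_0^t f\,ds$ as a Minkowski summand of $M_t$ — this is indeed the crux, since Hukuhara differences need not exist otherwise. Your verification of the equation and the uniqueness argument (conditioning to pin down $X$, then cancellation plus Lemma \ref{equal} to pin down $Z$) are both sound. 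Two small points you pass over, though they are the same technicalities the paper itself elides elsewhere: (a) you use the additivity $\mathbb{E}[A+B\mid\mathcal{F}_t]=\mathbb{E}[A\mid\mathcal{F}_t]+\mathbb{E}[B\mid\mathcal{F}_t]$ for set-valued conditional expectations without citation, which holds for compact convex integrably bounded random sets but deserves a word; and (b) membership of $t\mapsto X(t)$ in $\mathcal{L}^2_{ad}([0,T]\times\Omega,\mathcal{K}(\mathbb{R}^n))$ requires joint measurability in $(t,\omega)$, not just $\mathcal{F}_t$-measurability for each fixed $t$, so "clearly lies in" should be backed by a right-continuity/modification argument for the set-valued martingale. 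Neither gap affects the structure of the proof.
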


We first consider the following set-valued equation
$$X(t)+\int_t^TZ(s)dW_s=\xi +\int_t^Tf(s,Z(s))ds,  \quad \quad \mathbb{P}\mbox{-}a.s.\quad t\in [0.T],$$
where $\xi \in \mathbb{L}_T^2(\Omega,\mathcal{K}(\mathbb{R}^n))$, $Z_t \in \mathbb{G}$  and $f: [0,T] \times \Omega  \times \mathcal{P}(\mathbb{R}^{n\times m}) \rightarrow \mathcal{K}(\mathbb{R}^n)$.
\begin{assumption}\label{c51}
Assume that $f: [0,T] \times \Omega  \times \mathcal{P}(\mathbb{R}^{n\times m}) \rightarrow \mathcal{K}(\mathbb{R}^n)$ satisfies the following conditions:
\begin{enumerate}[($\romannumeral1$)]
\item for any given $A\in \mathcal{P}(\mathbb{R}^{n\times m})$, $f(\cdot,\cdot,A) \in \mathcal{L}^2_{ad}([0,T]\times \Omega,\mathcal{K}(\mathbb{R}^n))$;
\item for any fixed $(t,\omega) \in [0,T]\times \Omega$ and $A,B\in \mathcal{P}(\mathbb{R}^{n\times m})$, the Hukuhara difference
$f(t,\omega,A)\ominus f(t,\omega,B)$ exists whenever $A\ominus B$ exists;
\item for any given $A,B \in \mathcal{P}(\mathbb{R}^{n\times m})$ with $A\ominus B$ existing, there is a constant $c>0$ such that
$$\|f(t,\omega,A)\ominus f(t,\omega,B)\| \leqslant c\|A \ominus B\|, \quad \forall (t,\omega) \in [0,T]\times \Omega.$$
\end{enumerate}

\end{assumption}

\begin{thm}\label{t52}
Let $\xi \in \mathbb{L}_T^2(\Omega,\mathcal{K}(\mathbb{R}^n))$ and $f(t,\omega,Z(t))$ satisfy Assumption \ref{c51}. Then there exists a pair $(X,Z)\in \mathcal{L}^2_{ad}([0,T]\times \Omega,\mathcal{K}(\mathbb{R}^n))\times \mathbb{G}$ such that
\begin{align}\label{p1}
X(t)+\int_t^TZ(s)dW_s=\xi +\int_t^Tf(s,\omega,Z(s))ds,  \quad \forall t\in [0,T], \quad \mathbb{P}\mbox{-}a.s..
\end{align}
Moreover, if $(X_1(t),Z_1(t))$ and $(X_2(t),Z_2(t))$ are two solutions of \eqref{p1} and $Z_1(t)\ominus Z_2(t)$ exists, then $X_1(t)=X_2(t)$ and $Z_1(t)=Z_2(t)$ $\mathbb{P}$-a.s..
\end{thm}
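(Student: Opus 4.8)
The plan is to solve \eqref{p1} by a Picard iteration built on Lemma \ref{l4.1}, which already handles a driver that does not depend on $Z$. I would initialize $Z_0\equiv\{0\}\in\mathbb{G}$ and, given $Z_{p-1}\in\mathbb{G}$, observe that by Assumption \ref{c51}(i) the process $s\mapsto f(s,\omega,Z_{p-1}(s))$ lies in $\mathcal{L}^2_{ad}([0,T]\times\Omega,\mathcal{K}(\mathbb{R}^n))$, so Lemma \ref{l4.1} produces a unique pair $(X_p,Z_p)\in\mathcal{L}^2_{ad}([0,T]\times\Omega,\mathcal{K}(\mathbb{R}^n))\times\mathbb{G}$ with $X_p(t)+\int_t^TZ_p\,dW_s=\xi+\int_t^Tf(s,\omega,Z_{p-1}(s))\,ds$. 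A solution of \eqref{p1} is exactly a fixed point of this scheme, so everything rests on proving that $\{(X_p,Z_p)\}$ converges.

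First I would derive a clean equation for the increments. Writing $\delta f_p(s):=f(s,Z_p(s))\ominus f(s,Z_{p-1}(s))$, which exists by Assumption \ref{c51}(ii) whenever $Z_p\ominus Z_{p-1}$ exists, I subtract consecutive iterates: using $f(s,Z_p)=f(s,Z_{p-1})+\delta f_p$, additivity of the integrals, and the cancellation law, the two iteration identities collapse to
\begin{align}\label{incdiff}
(X_{p+1}\ominus X_p)(t)+\int_t^T(Z_{p+1}\ominus Z_p)\,dW_s=\int_t^T\delta f_p(s)\,ds .
\end{align}
The delicate point is that \eqref{incdiff} is only meaningful once the Hukuhara differences $X_{p+1}\ominus X_p$ and $Z_{p+1}\ominus Z_p$ are known to exist. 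To obtain them I pass to the martingale $M^p_t:=\mathbb{E}[\xi+\int_0^Tf(s,Z_{p-1})\,ds\mid\mathcal{F}_t]$, whose representing integrand (Lemma \ref{6}) is identified with $Z_p$, exactly as in Lemma \ref{l4.1}. By Lemma \ref{5} the difference $M^{p+1}_t\ominus M^p_t=\mathbb{E}[\int_0^T\delta f_p\,ds\mid\mathcal{F}_t]$ exists and is again a martingale; representing it by Lemma \ref{6} and comparing integrands through Lemma \ref{equal} forces $Z_{p+1}\ominus Z_p$ to exist and to equal that integrand, after which \eqref{incdiff} together with Lemma \ref{5} gives $X_{p+1}\ominus X_p=\mathbb{E}[\int_t^T\delta f_p\,ds\mid\mathcal{F}_t]$. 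Since $Z_1\ominus Z_0=Z_1\ominus\{0\}=Z_1$ exists trivially, induction combined with the chaining rule Lemma \ref{l2.2}(ii) shows that $Z_q\ominus Z_p$ exists for every $q\geqslant p$, which is precisely the hypothesis required to invoke Lemma \ref{t32}.

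Next I would convert \eqref{incdiff} into an energy estimate. Applying the set-valued It\^o formula for the square (Theorem \ref{cor3.1}, with terminal value $\{0\}$) to $X_{p+1}\ominus X_p$ and bounding the resulting inclusion through its norm-attaining (extreme) selection yields, componentwise,
\begin{align*}
\mathbb{E}\big[(X_{p+1}\ominus X_p)_k^2(t)\big]+\tfrac12\,\mathbb{E}\int_t^T(Z_{p+1}\ominus Z_p)_k^2\,ds\leqslant 2c^2\,\mathbb{E}\int_t^T\|Z_p\ominus Z_{p-1}\|^2\,ds,
\end{align*}
where the $dW_s$-term drops out because its selections are mean-zero (Lemma \ref{sitoi}), the Lipschitz bound is Assumption \ref{c51}(iii), and the $Z$-term on the right is absorbed by Young's inequality. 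Iterating this inequality produces the usual factorial bound $\|Z_{p+1}\ominus Z_p\|_s^2\leqslant\frac{(CT)^{p}}{p!}\|Z_1\|_s^2$, valid for arbitrary $T$, so $\sum_p\|Z_{p+1}\ominus Z_p\|_s<\infty$ and $\{Z_p\}$ is Cauchy in the sense of Lemma \ref{t32}; the limit $Z$ lies in $\mathbb{G}$ because $\mathbb{G}$ is closed (Remark \ref{r21}). The companion bound $\|X_{p+1}\ominus X_p\|\leqslant\mathbb{E}[\int_t^T\|\delta f_p\|\,ds\mid\mathcal{F}_t]$ makes $\{X_p\}$ Cauchy as well, and passing to the limit in the iteration identity—using the It\^o isometry for the $dW_s$ integral and the Lipschitz continuity of $f$ for the $ds$ integral—gives \eqref{p1}.

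Finally, for uniqueness I would run the same computation on two genuine solutions $(X_1,Z_1),(X_2,Z_2)$ with $Z_1\ominus Z_2$ assumed to exist: the analogue of \eqref{incdiff} holds with driver $f(s,Z_1)\ominus f(s,Z_2)$, and the energy estimate now closes on itself, giving $\mathbb{E}[(X_1\ominus X_2)_k^2(t)]\leqslant 2c^2\int_t^T\mathbb{E}[(X_1\ominus X_2)_k^2(s)]\,ds$. Gronwall's inequality forces $X_1=X_2$, and the estimate then leaves $\mathbb{E}\int_0^T(Z_1\ominus Z_2)_k^2\,ds=0$, so $Z_1=Z_2$ by Lemma \ref{equal}. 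I expect the main obstacle to be the set-valued bookkeeping rather than the analysis: establishing that the increment Hukuhara differences exist (the martingale-representation/Lemma \ref{equal} step) and extracting a genuine scalar inequality from the set inclusion in the set-valued It\^o formula are the two places where the classical BSDE argument does not transfer verbatim.
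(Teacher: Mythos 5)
Your proposal follows essentially the same route as the paper: the same Picard scheme built on Lemma \ref{l4.1}, the same induction establishing existence of the increments $X^{p+1}\ominus X^p$ and $Z^{p+1}\ominus Z^p$ via conditional expectations, the martingale $M^p$, Lemma \ref{6} and Lemma \ref{equal}, the same energy estimate from Theorem \ref{cor3.1} with the stochastic integral killed by Lemma \ref{sitoi}, and the same appeal to Lemma \ref{t32} and Remark \ref{r21} for convergence; the uniqueness argument is also the paper's. You correctly single out the two genuinely set-valued obstacles (existence of the increment differences, and extracting a scalar inequality from the inclusion).

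One concrete slip: your displayed energy inequality cannot be what Young's inequality gives. The cross term $2\mathbb{E}\int_t^T\|X_{p+1}\ominus X_p\|\,\|\delta f_p\|\,ds$ splits as $2c^2\mathbb{E}\int_t^T\|X_{p+1}\ominus X_p\|^2ds+\tfrac12\mathbb{E}\int_t^T\|Z_p\ominus Z_{p-1}\|^2ds$, so the right-hand side necessarily retains a term in the \emph{current} increment $X_{p+1}\ominus X_p$; it cannot simply be dropped. The paper removes it with the integrating factor $e^{2c^2t}$ (its inequality \eqref{e4.1+}--\eqref{e4.2+}), after which one gets the contraction $\int_t^Te^{2c^2(s-t)}v_{p+1}\,ds\leqslant\tfrac12\int_t^Te^{2c^2(s-t)}v_p\,ds$ and hence geometric decay $2^{-p}$ for arbitrary $c$ and $T$. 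As literally written, iterating your inequality yields a ratio of order $4c^2$ between consecutive $Z$-increments, which is a contraction only for small $c$, and the factorial bound $(CT)^p/p!$ you assert does not follow from it. The fix is exactly the Gronwall/integrating-factor step you already invoke in the uniqueness part; it just needs to be applied to the iteration estimate as well.
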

\begin{proof}
We first show the existence of solutions for \eqref{p1}. Let $X^0=\left\{0\right\}$ and $Z^0=\left\{0\right\}$.  By Assumption \ref{c51}(i), we know that $f(t,Z^0)\in \mathcal{L}^2_{ad}([0,T]\times \Omega,\mathcal{K}(\mathbb{R}^n))$. It follows from Lemma \ref{l4.1} that there is a pair $(X^1(t), Z^1(t))\in \mathcal{L}^2_{ad}([0,T]\times \Omega,\mathcal{K}(\mathbb{R}^n))\times \mathbb{G}$ such that
$$
X^1(t)+\int_t^TZ^1(s)dW_s=\xi +\int_t^Tf(s,\omega,Z^0(s))ds,  \quad \forall t\in [0,T], \quad \mathbb{P}\mbox{-}a.s..
$$
In the same way, we can obtain a sequence $\{(X^p(t),Z^p(t))\}\subset \mathcal{L}^2_{ad}([0,T]\times \Omega,\mathcal{K}(\mathbb{R}^n))\times \mathbb{G}$ such that
\begin{equation}\label{e4+1}
X^p(t)+\int_t^TZ^p(s)dW_s=\xi +\int_t^Tf(s,\omega,Z^{p-1}(s))ds, \quad \forall t\in [0,T], \quad \mathbb{P}\mbox{-}a.s., \quad p=1,2,\cdots.
\end{equation}

Now we conclude that for each $t\in [0,T]$, $X^p(t)\ominus X^{p-1}(t)$ and $Z^p(t)\ominus Z^{p-1}(t)$ exist $\mathbb{P}$-a.s..
In fact, for $p=1$, it is clear that $X^1(t)\ominus X^{0}$ and $Z^1(t)\ominus Z^{0}$ exist $\mathbb{P}$-a.s. since $X^0=\left\{0\right\}$ and $Z^0=\left\{0\right\}$. Assume that the assertion is true for $p-1$ ($p>1$). Then $X^{p-1}(t)\ominus X^{p-2}(t)$ and $Z^{p-1}(t)\ominus Z^{p-2}(t)$ exist $\mathbb{P}$-a.s.. It follows from Assumption \ref{c51} (ii) that $f(t,\omega,Z^{p-1}(t))\ominus f(t,\omega,Z^{p-2}(t))$ exists for all $t\in [0,T]$ and $\mathbb{P}$-a.s.. Since $f(t,\omega,Z^{p-1}(t)), f(t,\omega,Z^{p-2}(t)) \in \mathcal{L}^2_{ad}([0,T]\times \Omega,\mathcal{K}(\mathbb{R}^n))$, Lemma \ref{42} shows that $f(t,\omega,Z^{p-1}(t))\ominus f(t,\omega,Z^{p-2}(t)) \in \mathcal{L}^2_{ad}([0,T]\times \Omega,\mathcal{K}(\mathbb{R}^n))$ and
\begin{align*}
&\int_t^Tf(s,\omega,Z^{p-1}(s))ds \ominus \int_t^Tf(s,\omega,Z^{p-2}(s))ds\\
&=\int^T_t\left[f(s,\omega,Z^{p-1}(s))\ominus f(s,\omega,Z^{p-2}(s))\right]ds,  \quad \forall t\in [0,T], \quad \mathbb{P}\mbox{-}a.s..
\end{align*}
For fixed $t\in [0,T]$, we know that $\int_t^T f(s,\omega,Z^{p-1}(s))ds, \; \int_t^T f(s,\omega,Z^{p-2}(s))ds \in \mathbb{L}^2_T(\Omega,\mathcal{L}(\mathbb{R}^n))$ and so  Lemma \ref{5} implies that
\begin{align}\label{e4+2}
&\quad\; \mathbb{E}\left[\int_t^Tf(s,\omega,Z^{p-1}(s))ds\bigg|\mathcal{F}_t\right] \ominus \mathbb{E}\left[\int_t^Tf(s,\omega,Z^{p-2}(s))ds\bigg|\mathcal{F}_t\right]\nonumber\\
&=\mathbb{E}\left[\int_t^Tf(s,\omega,Z^{p-1}(s))\ominus f(s,\omega,Z^{n-2}(s))ds\bigg|\mathcal{F}_t\right].
\end{align}
By \eqref{e4+1}, we have
$$
\mathbb{E}\left[X^p(t)+\int_t^TZ^p(s)dW_s\bigg|\mathcal{F}_t\right]=\mathbb{E}\left[\xi +\int_t^Tf(s,\omega,Z^{p-1}(s))ds\bigg|\mathcal{F}_t\right].
$$
It follows from Definition \ref{d25} and Lemma \ref{sitoi} that, $p=1,2,\cdots$,
\begin{align}\label{e4+3}
X^p(t)=X^p(t)+\mathbb{E}\left[\int_t^TZ^p(s)dW_s\right]&=\mathbb{E}\left[\xi +\int_t^Tf(s,\omega,Z^{p-1}(s))ds\bigg|\mathcal{F}_t\right],
\quad \forall t\in [0,T], \quad \mathbb{P}\mbox{-}a.s..
\end{align}
Now from \eqref{e4+2} and \eqref{e4+3}, one has
$$
X^p(t)\ominus X^{p-1}(t)=\mathbb{E}\left[\int_t^T f(s,\omega,Z^{p-1}(s))ds\bigg|\mathcal{F}_t\right]\ominus \mathbb{E}\left[\int_t^T f(s,\omega,Z^{p-2}(s))ds\bigg|\mathcal{F}_t\right], \; \forall t\in [0,T], \; \mathbb{P}\mbox{-}a.s.
$$
with $p=1,2,\cdots$. This means $X^p\ominus X^{p-1}$ exists $\mathbb{P}$-a.s.. Next we show that $Z^p\ominus Z^{p-1}$ exists $\mathbb{P}$-a.s.. To this end, let
$$M^{p}(t)=\mathbb{E}\left[\xi +\int_0^T f(s,\omega,Z^{p-1}(s))ds\bigg|\mathcal{F}_t\right], \quad \forall t\in [0,T],\; p=1,2,\cdots.
$$
Then, by Lemma \ref{12.1} (ii) and Lemma \ref{5}, we have
$$M^{p}(t) \ominus M^{p-1}(t)=\mathbb{E}\left[ \int_0^T[f(s,\omega,Z^{p-1}(s))\ominus f(s,\omega,Z^{p-2}(s))]ds\bigg|\mathcal{F}_t\right], \quad \forall t\in [0,T],\; p=1,2,\cdots.$$
By \eqref{e4+1} with $t=0$, one has
$$
\mathbb{E}\left[X^p(0)+\int_0^TZ^p(s)dW_s\bigg|\mathcal{F}_t\right]=\mathbb{E}\left[\xi +\int_0^T f(s,\omega,Z^{p-1}(s))ds\bigg|\mathcal{F}_t\right]
$$
and so  Definition \ref{d25} shows that
\begin{align}\label{e4+4}
X^p(0)+\int_0^tZ^p(s)dW_s=\mathbb{E}\left[\xi +\int_0^T f(s,\omega,Z^{p-1}(s))ds\bigg|\mathcal{F}_t\right]=M^n(t).
\end{align}
Since $M^{p}(t) \ominus M^{p-1}(t)$ is a uniformly square-integrable set-valued martingale, it follows from Lemma \ref{6} that there exists $\tilde{Z}^p(t) \in \mathbb{G}$ such that
$$M^{p}(t) \ominus M^{p-1}(t)=M^{p}(0) \ominus M^{p-1}(0)+\int_0^t \tilde{Z}^p(s)dW_s=X^{p}(0)\ominus X^{p-1}(0)+\int_0^t\tilde{Z}^p(s)dW_s.$$
Moreover, by \eqref{e4+4} and Lemma \ref{l2.11}, one has
\begin{align*}
X^p(0)+\int_0^t Z^p(s)dW_s&=M^{p-1}(t)+M^p(t)\ominus M^{p-1}(t)\\
&=X^{p-1}(0)+\int_0^t Z^{p-1}(s)dW_s +X^{n}(0)\ominus X^{p-1}(0)+\int_0^t \tilde{Z}^p(s)dW_s\\
&=X^{p-1}(0)+X^{p}(0)\ominus X^{p-1}(0)+\int_0^t [Z^{p-1}(s)+\tilde{Z}^p(s)]dW_s, \; \forall t\in [0,T], \; \mathbb{P}\mbox{-}a.s.
\end{align*}
with $p=1,2,\cdots$. From Lemma \ref{equal}, this yields that $Z^p(t)=Z^{p-1}(t)+\tilde{Z}^p(t) \in \mathbb{G}$ and so $Z^p(t)\ominus Z^{p-1}(t)=\tilde{Z}^p(t)$ exists $\mathbb{P}$-a.s.. Thus, by the induction, we know that for each $t\in [0,T]$, $X^p(t)\ominus X^{p-1}(t)$ and $Z^p(t)\ominus Z^{p-1}(t)$ exist $\mathbb{P}$-a.s..

Next we show that there is a pair  $(X(t), Z(t))\in \mathcal{L}^2_{ad}([0,T]\times \Omega,\mathcal{K}(\mathbb{R}^n))\times \mathbb{G}$ such that
$$\|X(s)\ominus X^p(s)\|_s \rightarrow 0,\quad \|Z(s)\ominus Z^p(s)\|_s \rightarrow 0.$$
Indeed, it follows from \eqref{e4+1} that
\begin{align}
&X^{p+1}(t)\ominus X^{p}(t)+\int_t^TZ^{p+1}(s)\ominus Z^p(s)dW_s\nonumber\\
&= \int_t^Tf(s,\omega,Z^{p}(s))\ominus f(s,\omega,Z^{p-1}(s))ds, \; \forall t\in [0,T], \; \mathbb{P}\mbox{-}a.s., \; p=1,2,\cdots,
\end{align}
which is equal to
$$X^{p+1}_i(t)\ominus X^{p}_i(t)+\int_t^TZ^{p+1}_i(s)\ominus Z^p_i(s) dW_s= \int_t^Tf_i(s,Z^{p}(s))\ominus f_i(s,Z^{p-1}(s))ds,\quad t\in[0,T], \quad i=1,\cdots,n,$$
where
$$X^{p+1}_i(t)\ominus X^{p}_i(t), \: f_i(t,Z^{p}(t))\ominus f_i(t,Z^{p-1}(t)) \in \mathcal{L}^2_{ad}([0,T]\times \Omega,\mathcal{K}(\mathbb{R}))$$
and $$(Z^{p+1}_i(t)\ominus Z^p_i(t))^T \\ \in \mathcal{K}_w(\mathcal{L}^2_{ad}([0,T]\times \Omega,\mathcal{K}(\mathbb{R}^m)))$$
are the components of $X^{p+1}(t)\ominus X^{p}(t)$,
$f(s,Z^{p}(s))\ominus f(t,Z^{p-1}(t))$ and $Z^{p+1}(t)\ominus Z^p(t)$, respectively. It follows from Theorem \ref{cor3.1} that
\begin{align*}
&\quad\;(X^{p+1}_i(t) \ominus X^p_i(t))^2 + \int_t^T(Z^{p+1}_i(s) \ominus Z^p_i(s))^2ds \\
&\subset 2\int_t^T(X^{p+1}_i(s)\ominus X^p_i(s))(f_i(s,Z^p(s))\ominus f_i(s,Z^{p-1}(s)))ds - 2\int_t^T(X^{p+1}_i(s)\ominus X^p_i(s))(Z^{p+1}_i(s)\ominus Z^p_i(s))dW_s,\\
&\quad i=1,\cdots,n.
\end{align*}
From Lemma \ref{sitoi}, Assumption \ref{c51} and the basic inequality $\frac{1}{\rho}a^2+\rho b^2\geqslant 2ab$ with $\rho=2c^2$, we have
\begin{align}\label{e4+5}
&\quad\;\mathbb{E}\|X^{p+1}(t)\ominus X^p(t)\|^2+\mathbb{E}\int_t^T\|Z^{p+1}(s) \ominus Z^p(s)\|^2ds\nonumber\\
&\leqslant \frac{1}{2}\mathbb{E}\int_t^T\|Z^p(s) \ominus Z^{p-1}(s)\|^2ds + 2c^2\mathbb{E}\int_t^T\|X^{p+1}(s)\ominus X^{p}(s)\|^2ds.
\end{align}
Denote
$$u_p(t)=\mathbb{E}\int_t^T\|X^{p}(s)\ominus X^{p-1}(s)\|^2ds, \quad v_p(t)=\mathbb{E}\int_t^T\|Z^{p}(s) \ominus Z^{p-1}(s)\|^2ds, \quad p=1,2,\cdots.$$
Then \eqref{e4+5} leads to
\begin{align}\label{e4.1+}
-\frac{d}{dt}\left(u_{p+1}(t)e^{2c^2t}\right)+e^{2c^2t}v_{p+1}(t)\leqslant \frac{1}{2}e^{2c^2t}v_p(t).
\end{align}
Integrating from $t$ to $T$ for two sides of \eqref{e4.1+},  we obtain
\begin{align}\label{e4.2+}
u_{p+1}(t)+\int_t^T e^{2c^2(s-t)}v_{p+1}(s)ds \leqslant \frac{1}{2} \int_t^T e^{2c^2(s-t)}v_p(s)ds.
\end{align}
Noting $u_{p+1}(t),v_{p+1}(t)\geqslant 0$, it follows from \eqref{e4.2+} that
$$\int_t^T e^{2c^2(s-t)}v_{p+1}(s)ds \leqslant \frac{1}{2} \int_t^T e^{2c^2(s-t)}v_p(s)ds.$$
Iterating the inequality and taking $t=0$ in above inequality, we have
$$\int_0^Te^{2c^2t}v_{p+1}(t)dt \leqslant 2^{-p}\bar{c}e^{2c^2},$$
where
$$\bar{c}=\sup\limits_{0\leqslant t\leqslant T}v_1(t)=\mathbb{E}\int_0^T\|Z^1(t)\|dt.$$
Moreover, it follows from \eqref{e4.2+} that
$$
u_{p+1}(t)\leqslant \frac{1}{2} \int_t^T e^{2c^2(s-t)}v_p(s)ds
$$
and so
\begin{align}\label{u1}
u_{p+1}(0) \leqslant 2^{-p}\bar{c}e^{2c^2}.
\end{align}
However, from \eqref{e4.1+}, \eqref{u1} and the fact that $\frac{d}{dt}u_{p+1}(t)\leqslant 0$, we have
$$v_{p+1}(0)\leqslant 2c^2u_{p+1}(0) +\frac{1}{2} v_{p}(0)\leqslant 2^{-p+1}\bar{c}c^2e^{2c^2}+\frac{1}{2}v_{p}(0).$$
It is easy to check that
\begin{align}\label{v1}
v_{p+1}(0)\leqslant2^{-p}\left(p\bar{c}2c^2e^{2c^2}+v_{1}(0)\right).
\end{align}
For any $q>p$, from Lemma \ref{l2.2}, we know that $X^q(t)\ominus X^p(t)$ and $Z^q(t)\ominus Z^p(t)$ exist and
$$X^q(t)\ominus X^p(t)=X^q(t)\ominus X^{q-1}(t)+X^{q-1}(t)\ominus X^{q-2}(t)+\cdots +X^{p+1}(t)\ominus X^p(t),$$
$$Z^q(t)\ominus Z^q(t)=Z^q(t)\ominus Z^{q-1}(t)+Z^{q-1}(t)\ominus Z^{q-2}(t)+\cdots +Z^{p+1}(t)\ominus Z^p(t).$$
It follows from \eqref{u1}, \eqref{v1} and the triangle inequality that
$$\|X^q(t)\ominus X^p(t)\|_s \leqslant (q-p)2^{-p}\bar{c}e^{2c^2}$$
and
$$\|Z^q(t)\ominus Z^p(t)\|_s \leqslant (q-p)2^{-p}\left(p\bar{c}2c^2e^{2c^2}+v_{1}(0)\right).$$
Thus, Theorem \ref{t32} and Remark \ref{r21} show that there exists a pair  $(X, Z)\in \mathcal{L}^2_{ad}([0,T]\times \Omega,\mathcal{K}(\mathbb{R}^n))\times \mathbb{G}$ such that
$$\|X(t)\ominus X^p(t)\|_s \rightarrow 0,\quad \|Z(t)\ominus Z^p(t)\|_s \rightarrow 0.$$
Now it follows from \eqref{e4+1} that
$$
X(t)+\int_t^TZ(s)dW_s=\xi +\int_t^Tf(s,\omega,Z(s))ds,  \quad \forall t\in [0,T], \quad \mathbb{P}\mbox{-}a.s..
$$

Next we show the uniqueness. Assume that $(X_1(t),Z_1(t))$ and $(X_2(t),Z_2(t))$ are two solutions of \eqref{p1} and $Z_1(t)\ominus Z_2(t)$ exists. Then
$$X_1(t)+\int_t^TZ_1(s)dW_s=\xi +\int_t^Tf(s,Z_1(s))ds,\quad X_2(t)+\int_t^TZ_2(s)dW_s=\xi +\int_t^Tf(s,Z_2(s))ds.$$
It follows from Assumption \ref{c51} (ii) that $f(s,Z_1(s))\ominus f(s,Z_2(s))$ exists. By Lemma \ref{42} and Lemma \ref{5}, we have
\begin{align*}
X_1(t)\ominus X_2(t)&=\mathbb{E}\left[\xi+\int_t^Tf(s,Z_1(s))ds\bigg|\mathcal{F}_t\right]\ominus \mathbb{E}\left[\xi+\int_t^Tf(s,Z_2(s))ds\bigg|\mathcal{F}_t\right]\\
&=\mathbb{E}\left[\int_t^Tf(s,Z_1(s))\ominus f(s,Z_2(s))ds\bigg|\mathcal{F}_t\right],
\end{align*}
which implies that $X_1(t)\ominus X_2(t)$ exists.  This yields that
$$X_1(t)\ominus X_2(t)+\int_t^T Z_1(s)\ominus Z_2(s)dW_s=\int_t^T f(s,Z_1(s))\ominus f(s,Z_2(s))ds,$$
which is equal to
$$X_{1i}(t) \ominus X_{2i}(t)+\int_t^T Z_{1i}(s)\ominus Z_{2i}(s)dW_s= \int_t^Tf_i(s,Z_1(s))\ominus f_i(s,Z_2(s))ds,\quad i=1,\cdots,n, $$
where $X_{1i}(t) \ominus X_{2i}(t),f_i(t,Z_1(t))\ominus f_i(t,Z_2(t)) \in \mathcal{L}^2_{ad}([0,T]\times \Omega,\mathcal{K}(\mathbb{R})$,
$(Z_{1i}(t)\ominus Z_{2i}(t))^T  \in \mathcal{K}_w(\mathcal{L}^2_{ad}([0,T]\times \Omega,\mathcal{K}(\mathbb{R}^m)))$ are the component of $X_1(t)\ominus X_2(t),f(s,Z_1(t))\ominus f(s,Z_2(t))$  and $Z_{1}(t)\ominus Z_{2}(t)$ respectively. It follows from  Corollary \ref{cor3.1} that
\begin{align} \label{e4+6}
&\quad\;\mathbb{E}\|X_{1}(t)\ominus X_{2}(t)\|^2+\mathbb{E}\int_t^T\|Z_{1}(s) \ominus Z_{2}(s)\|^2ds\nonumber
\\
&\leqslant 2\mathbb{E}\int_t^T \|X_{1}(s)\ominus X_{2}(s)\|\|f_i(s,Z_1(s))\ominus f_i(s,Z_2(s))\|ds\nonumber
\\
&\leqslant \frac{1}{2}\mathbb{E}\int_t^T\|Z_{1}(s) \ominus Z_{2}(s)\|^2ds + 2c^2\mathbb{E}\int_t^T\|X_{1}(s)\ominus X_{2}(s)\|^2ds.
\end{align}
Denote
$$u_1(t)=\mathbb{E}\int_t^T\|X_{1}(s)\ominus X_{2}(s)\|^2ds, \quad v_1(t)=\mathbb{E}\int_t^T\|Z_{1}(s)\ominus Z_{2}(s)\|^2ds.$$
Then \eqref{e4+6} leads to
\begin{align}\label{e4.1}
-\frac{d}{dt}\left(u_1(t)e^{2c^2t}\right)+e^{2c^2t}v_1(t)\leqslant \frac{1}{2}e^{2c^2t}v_1(t).
\end{align}
Integrating from $t$ to $T$ for two sides of \eqref{e4.1}, we obtain
$$u_1(t)+\int_t^T e^{2c^2(s-t)}v_1(s)ds \leqslant \frac{1}{2} \int_t^T e^{2c^2(s-t)}v_1(s)ds.$$
This implies that
$$u_1(t)\leqslant \int_t^T e^{2c^2(s-t)}v_1(s)ds,\; v_1(t)\leqslant \frac{1}{2}v_1(t).$$
Thus, $v_1(0)=0$ and $u_1(0)=0$.
\end{proof}

We now consider the following set-valued equation
$$X(t)+\int_t^TZ(s)dW_s=\xi +\int_t^Tf(s,X(s),Z(s))ds,  \quad \quad \mathbb{P}\mbox{-}a.s.\quad t\in [0.T].$$
where $\xi \in \mathbb{L}_T^2(\Omega,\mathcal{K}(\mathbb{R}^n))$, $Z_t \in \mathbb{G}$ and $f: [0,T] \times \Omega  \times \mathcal{K}(\mathbb{R}^n) \times \mathcal{P}(\mathbb{R}^{n\times m}) \rightarrow \mathcal{K}(\mathbb{R}^n)$.
\begin{assumption}\label{c52} Assume $f: [0,T] \times \Omega  \times \mathcal{K}(\mathbb{R}^n) \times \mathcal{P}(\mathbb{R}^{n\times m}) \rightarrow \mathcal{K}(\mathbb{R}^n)$ satisfies the following conditions:
\begin{enumerate}[($\romannumeral1$)]
\item for any given $A\in \mathcal{K}(\mathbb{R}^n)$ and $B\in \mathcal{P}(\mathbb{R}^{n\times m}) $, $f(\cdot,\cdot,A,B) \in \mathcal{L}^2_{ad}([0,T]\times \Omega,\mathcal{K}(\mathbb{R}^n))$;
\item for any fixed $(t,\omega) \in [0,T]\times \Omega$, $A,B \in \mathcal{K}(\mathbb{R}^n)$ and $C,D\in\mathcal{P}(\mathbb{R}^{n\times m})$, the Hukuhara difference $f(t,\omega,A,C) \ominus f(t,\omega,B,C)$ exists whenever $A\ominus B$ exists and the Hukuhara difference $f(t,\omega,B,C)\ominus f(t,\omega,B,D)$ exists whenever $C\ominus D$ exists;
\item there exists a constant $c>0$ such that, for any $A,B \in \mathcal{K}(\mathbb{R}^n)$ and $C,D\in\mathcal{P}(\mathbb{R}^{n\times m})$ with $A\ominus B$ and $C\ominus D$ existing,
$$\|f(t,\omega,A,C)\ominus f(t,\omega,B,D)\| \leqslant c(\|A\ominus B \|+\|C\ominus D\|), \quad \forall t\in [0,T].$$
\end{enumerate}
\end{assumption}

\begin{thm}\label{th4.2}
Let $\xi \in \mathbb{L}_T^2(\Omega,\mathcal{K}(\mathbb{R}^n))$ and $f$ satisfy the Assumption \ref{c52}. Then, there exists a pair $(X,Z)\in \mathcal{L}^2_{ad}([0,T]\times \Omega,\mathcal{K}(\mathbb{R}^n))\times \mathbb{G}$ such that
\begin{align}\label{p2}
X(t)+\int_t^TZ(s)dW_s=\xi +\int_t^Tf(s,X(s),Z(s))ds,  \quad \quad \mathbb{P}\mbox{-}a.s.\quad t\in [0.T].
\end{align}
Moreover, if $(X_1(t),Z_1(t))$ and $(X_2(t),Z_2(t))$ are two solutions to \eqref{p2} with $X_1(t)\ominus X_2(t)$ and $Z_1(t)\ominus Z_2(t)$ existing, then $X_1(t)=X_2(t)$, $Z_1(t)=Z_2(t)$ $\mathbb{P}$-a.s..
\end{thm}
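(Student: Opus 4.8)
The plan is to run a Picard iteration that mirrors the proof of Theorem \ref{t52}, now iterating in the $X$-variable as well. Set $X^{0}=\{0\}$, $Z^{0}=\{0\}$ and, given $(X^{p-1},Z^{p-1})$, freeze \emph{both} arguments in the driver, so that $f(\cdot,\cdot,X^{p-1}(\cdot),Z^{p-1}(\cdot))$ is a fixed process lying in $\mathcal{L}^2_{ad}([0,T]\times\Omega,\mathcal{K}(\mathbb{R}^n))$ (guaranteed by Assumption \ref{c52}(i) together with the Lipschitz bound (iii)). Lemma \ref{l4.1} then yields a unique pair $(X^{p},Z^{p})\in\mathcal{L}^2_{ad}([0,T]\times\Omega,\mathcal{K}(\mathbb{R}^n))\times\mathbb{G}$ with
$$X^{p}(t)+\int_t^T Z^{p}(s)dW_s=\xi+\int_t^T f(s,X^{p-1}(s),Z^{p-1}(s))ds,\qquad t\in[0,T],\ \mathbb{P}\text{-a.s.}$$
Freezing both arguments (rather than freezing only $X^{p-1}$ and invoking Theorem \ref{t52}) keeps the Hukuhara-difference bookkeeping completely parallel to Theorem \ref{t52} and avoids a circular dependence.

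Next I would show by induction that $X^{p}\ominus X^{p-1}$ and $Z^{p}\ominus Z^{p-1}$ exist $\mathbb{P}$-a.s. Assuming this at level $p-1$, Assumption \ref{c52}(ii) and Lemma \ref{l2.2} (routing through the intermediate set $f(s,X^{p-1},Z^{p-2})$) give the existence of $f(s,X^{p-1},Z^{p-1})\ominus f(s,X^{p-2},Z^{p-2})$. Taking conditional expectations in the identity $X^{p}(t)=\mathbb{E}[\xi+\int_t^T f(s,X^{p-1},Z^{p-1})ds\mid\mathcal{F}_t]$ and applying Lemma \ref{5} yields $X^{p}\ominus X^{p-1}$, while representing the martingale $M^{p}(t)=\mathbb{E}[\xi+\int_0^T f(s,X^{p-1},Z^{p-1})ds\mid\mathcal{F}_t]$ via Lemma \ref{6} and identifying the integrand with Lemma \ref{equal} produces $Z^{p}\ominus Z^{p-1}$ together with $Z^{p}\in\mathbb{G}$, exactly as in Theorem \ref{t52}.

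Subtracting consecutive equations gives
$$X^{p+1}\ominus X^{p}+\int_t^T(Z^{p+1}\ominus Z^{p})dW_s=\int_t^T\big[f(s,X^{p},Z^{p})\ominus f(s,X^{p-1},Z^{p-1})\big]ds.$$
Applying the set-valued It\^o formula of Corollary \ref{cor3.2} componentwise to $(X^{p+1}\ominus X^{p})^2$, taking expectations, and annihilating the stochastic integral with the set-valued It\^o isometry (Lemma \ref{sitoi}) leaves, using the Lipschitz bound of Assumption \ref{c52}(iii),
$$\mathbb{E}\|X^{p+1}(t)\ominus X^{p}(t)\|^2+\mathbb{E}\int_t^T\|Z^{p+1}\ominus Z^{p}\|^2ds\leqslant 2c\,\mathbb{E}\int_t^T\|X^{p+1}\ominus X^{p}\|\big(\|X^{p}\ominus X^{p-1}\|+\|Z^{p}\ominus Z^{p-1}\|\big)ds.$$
Writing $u_p(t)=\mathbb{E}\int_t^T\|X^{p}\ominus X^{p-1}\|^2ds$ and $v_p(t)=\mathbb{E}\int_t^T\|Z^{p}\ominus Z^{p-1}\|^2ds$, Young's inequality $2\alpha\beta\leqslant\rho\alpha^2+\rho^{-1}\beta^2$ together with an integrating factor $e^{Kt}$ converts this into a \emph{coupled} two-term recursion expressing $(u_{p+1},v_{p+1})$ in terms of $(u_p,v_p)$. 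Choosing the Young parameter $\rho$ large enough makes the associated $2\times 2$ recursion matrix have spectral radius below $1$, so $u_p(0)$ and $v_p(0)$ decay geometrically and $\{X^{p}\}$, $\{Z^{p}\}$ are Cauchy in $\|\cdot\|_s$. Lemma \ref{t32}, together with Remark \ref{r21} for the closedness of $\mathbb{G}$, then delivers limits $(X,Z)$, and passing to the limit in the iteration yields \eqref{p2}.

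For uniqueness, subtracting the two solution equations and repeating the It\^o-formula (Corollary \ref{cor3.2}), isometry (Lemma \ref{sitoi}) and Lipschitz/Young steps gives, after absorbing half of the $Z$-term into the left-hand side, $-u'(t)\leqslant(2c+2c^2)u(t)$ for $u(t)=\mathbb{E}\int_t^T\|X_1\ominus X_2\|^2ds$; since $u(T)=0$ and $u\geqslant 0$, a backward Gronwall argument forces $u\equiv 0$, and the retained $Z$-term then gives $Z_1=Z_2$. The \textbf{main obstacle}, relative to Theorem \ref{t52}, is precisely the extra term $\|X^{p}\ominus X^{p-1}\|$ entering the Lipschitz estimate: in Theorem \ref{t52} the driver depended on $Z$ alone and the contraction was a scalar geometric recursion in $v_p$, whereas here the $X$- and $Z$-increments are genuinely coupled, so one must set up and solve a two-dimensional Gronwall-type recursion and tune the Young constant to recover a contraction. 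A secondary technical point is verifying that the frozen driver $f(\cdot,\cdot,X^{p-1},Z^{p-1})$ is square-integrably bounded, so that Lemma \ref{l4.1} is applicable at every step.
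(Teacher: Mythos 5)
Your plan is essentially the paper's proof: the same Picard scheme freezing both arguments of the driver and invoking Lemma \ref{l4.1} at each step, the same induction (via conditional expectations, Lemma \ref{5}, the martingale representation Lemma \ref{6} and Lemma \ref{equal}) to obtain existence of the Hukuhara differences $X^{p}\ominus X^{p-1}$ and $Z^{p}\ominus Z^{p-1}$ together with $Z^p\in\mathbb{G}$, the same use of Corollary \ref{cor3.2} and Lemma \ref{sitoi} to derive the coupled a priori estimate, convergence via Lemma \ref{t32} and Remark \ref{r21}, and the same uniqueness argument. You have also correctly isolated the real new difficulty relative to Theorem \ref{t52}, namely the coupling of the $X$- and $Z$-increments.

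The one step I would not accept as written is the claim that choosing the Young parameter $\rho$ large enough makes the $2\times 2$ recursion matrix have spectral radius below $1$. Enlarging $\rho$ shrinks the coefficients in front of $u_p$ and $v_p$, but it simultaneously inflates the Gronwall exponent $K=K(\rho)$ in the integrating factor $e^{Kt}$; once you bound $\int_t^T e^{K(s-t)}(u_p+v_p)\,ds$ by $Te^{KT}\bigl(u_p(0)+v_p(0)\bigr)$ to obtain a genuine matrix recursion, the resulting entry is of order $\rho^{-1}e^{c\rho T}T$, which cannot be made small for arbitrary $c$ and $T$. The paper avoids this by fixing the Young constant once (producing coefficients $\tfrac12$ and exponent $4c^2$) and then iterating the \emph{integral} inequality $u_{p+1}(t)+\int_t^Te^{4c^2(s-t)}v_{p+1}(s)\,ds\leqslant \tfrac12\int_t^Te^{4c^2(s-t)}\bigl(u_p(s)+v_p(s)\bigr)\,ds$ in the time variable, so that the $p$-fold composition contributes the factorial-type factor $\sum_{k\leqslant p}(e^{4c^2T})^k/k!$, which stays bounded while the accumulated $2^{-(p-1)}$ drives the decay. (Alternatively one can contract on subintervals of length depending on $c$ and patch.) With that repair, your argument coincides with the paper's.
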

\begin{proof}
We first show the existence of solutions for \eqref{p2}.  For any given $X^0=\left\{0\right\}$ and $Z^0=\left\{0\right\}$, by Lemma \ref{l4.1}, we can obtain a sequence $\{(X^p(t),Z^p(t))\}$ satisfying
\begin{align}\label{e4+7}
X^p(t)+\int_t^TZ^p(s)dW_s=\xi +\int_t^Tf(s,X^{p-1}(s),Z^{p-1}(s))ds, \quad t\in[0,T], p=1,2,\cdots.
\end{align}
We conclude that $X^p(t)\ominus X^{p-1}(t)$, $Z^p(t)\ominus Z^{p-1}(t)$, $f(t,X^p(t),Z^p(t))\ominus f(t,X^{p-1}(t),Z^{p-1}(t))$ exist.
In fact, for $p=1$, it is trivial since $X^0=\left\{0\right\}$ and $Z^0=\left\{0\right\}$.
Assume that the assertion is true for $p-1$ with $p>1$. Then $X^{p-1}(t)\ominus X^{p-2}(t)$ and $Z^{p-1}(t)\ominus Z^{p-2}(t)$ exist. From Assumption \ref{c52} (ii), $f(t,X^{p-1}(t),Z^{p-1}(t))\ominus f(t,X^{p-2}(t),Z^{p-1}(t))$ and $f(t,X^{p-2}(t),Z^{p-1}(t))\ominus f(t,X^{p-2}(t),Z^{p-2}(t))$ exist for any $t\in [0,T]$. It follows from Lemma \ref{l2.2} that $f(t,X^{p-1}(t),Z^{p-1}(t))\ominus f(t,X^{p-2}(t),Z^{p-2}(t))$ exists and
\begin{align*}
&f(t,X^{p-1}(t),Z^{p-1}(t))\ominus f(t,X^{p-2}(t),Z^{p-2}(t))\\
=&f(t,X^{p-1}(t),Z^{p-1}(t))\ominus f(t,X^{p-2}(t),Z^{p-1}(t))+f(t,X^{p-2}(t),Z^{p-1}(t))\ominus f(t,X^{p-2}(t),Z^{p-2}(t)).
\end{align*}
Thus, Lemma \ref{5} yields that
\begin{align}\label{e4+8}
&\mathbb{E}\left[\int_t^T f(s,X^{p-1}(s),Z^{p-1}(s))\ominus f(s,X^{p-2}(s),Z^{p-2}(s))ds\bigg|\mathcal{F}_t\right]\nonumber\\
=&\mathbb{E}\left[\int_t^T f(s,X^{p-1}(s),Z^{p-1}(s)) \bigg|\mathcal{F}_t\right] \ominus \mathbb{E}\left[\int_t^T f(s,X^{p-2}(s),Z^{p-2}(s))ds\bigg|\mathcal{F}_t\right].
\end{align}
By \eqref{e4+7},  one has
$$
\mathbb{E}\left[X^p(t)+\int_t^TZ^p(s)dW_s\bigg|\mathcal{F}_t\right]=\mathbb{E}\left[\xi +\int_t^Tf(s,\omega,X^{p-1}(s),Z^{p-1}(s))ds\bigg|\mathcal{F}_t\right].
$$
It follows from Definition \ref{d25} and Lemma \ref{sitoi} that
\begin{align}\label{e4+9}
X^p(t)&=X^p(t)+\mathbb{E}\left[\int_t^TZ^p(s)dW_s\right]\nonumber\\
&=\mathbb{E}\left[\xi +\int_t^Tf(s,\omega,X^{p-1}(s),Z^{p-1}(s))ds\bigg|\mathcal{F}_t\right],\quad \forall t\in [0,T], \; \mathbb{P}\mbox{-}a.s., \; p=1,2,\cdots.
\end{align}
By \eqref{e4+8} and \eqref{e4+9}, $X^n(t)\ominus X^{n-1}(t)$ exists and
$$X^p(t)\ominus X^{p-1}(t)=\mathbb{E}\left[\xi+\int_t^T f(s,X^{p-1}(s),Z^{p-1}(s))ds\bigg|\mathcal{F}_t\right] \ominus \mathbb{E}\left[\xi+\int_t^T f(s,X^{p-2}(s),Z^{p-2}(s))ds\bigg|\mathcal{F}_t\right].$$
Let
$$M^p(t)=\mathbb{E}\left[\xi+\int_0^T f(s,X^{p-1}(s),Z^{p-1}(s))ds\bigg|\mathcal{F}_t\right].$$
Then by the same argument above, we can shows that $M^p(t)\ominus M^{p-1}(t)$ exists and
$$M^p(t)\ominus M^{p-1}(t)=\mathbb{E}\left[\int_0^T f(s,X^{p-1}(s),Z^{p-1}(s))\ominus f(s,X^{p-2}(s),Z^{p-2}(s))ds\bigg|\mathcal{F}_t\right].$$
By \eqref{e4+7} with $t=0$, we have
$$
\mathbb{E}\left[X^p(0)+\int_0^TZ^p(s)dW_s\bigg|\mathcal{F}_t\right]=\mathbb{E}\left[\xi +\int_0^T f(s,\omega,X^{p-1}(s),Z^{p-1}(s))ds\bigg|\mathcal{F}_t\right].
$$
It follows from Definition \ref{d25} that
\begin{align}\label{e4+10}
M^p(t)=X^p(0)+\int_0^tZ^p(s)dW_s=\mathbb{E}\left[\xi +\int_0^T f(s,\omega,X^{p-1}(s),Z^{p-1}(s))ds\bigg|\mathcal{F}_t\right].
\end{align}
Noting that $M^{p}(t) \ominus M^{p-1}(t)$ is a uniformly square-integrable set-valued martingale, by Lemma \ref{6}, there exists $\tilde{Z}^p(t) \in \mathbb{G}$ such that
$$M^{p}(t) \ominus M^{p-1}(t)=M^{p}(0) \ominus M^{p-1}(0)+\int_0^T \tilde{Z}^p(s) dW_s=X^{p}(0) \ominus X^{p-1}(0)+\int_0^T \tilde{Z}^p(s) dW_s.$$
Moreover, by \eqref{e4+10} and Lemma \ref{l2.11}, one has
\begin{align*}
X^p(0)+\int_0^t Z^p(s)dW_s&=M^{p-1}(t)+M^p(t)\ominus M^{p-1}(t)\\
&=X^{p-1}(0)+X^{p}(0)\ominus X^{p-1}(0)+\int_0^t Z^{p-1}(s)dW_s +\int_0^t \tilde{Z}^p(s)dW_s\\
&=X^{p-1}(0)+X^{p}(0)\ominus X^{p-1}(0)+\int_0^t [Z^{p-1}(s)+\tilde{Z}^p(s)]dW_s, \; \forall t\in [0,T], \; \mathbb{P}\mbox{-}a.s.
\end{align*}
with $p=1,2,\cdots$. From Lemma \ref{equal}, this implies $Z^p(t)=Z^{p-1}(t)+\tilde{Z}^p(t) \in \mathbb{G}$ and so $Z^p(t)\ominus Z^{p-1}(t)=\tilde{Z}^p(t)$ exists.
Thus, it follows from \eqref{e4+9} that
$$X^{p+1}(t)\ominus X^{p}(t)+\int_t^TZ^{p+1}(s)\ominus Z^p(s) dW_s= \int_t^Tf(s,X^{p+1}(s),Z^{p+1}(s))\ominus f(s,X^{p}(s),Z^{p}(s))ds, \quad t\in[0,T],$$
which is equal to
$$X^{p+1}_i(t)\ominus X^{p}_i(t)+\int_t^TZ^{p+1}_i(s)\ominus Z^p_i(s) dW_s= \int_t^Tf_i(s,X^{p+1}(s),Z^{p+1}(s))\ominus f_i(s,X^{p}(s),Z^{p}(s))ds,\quad t\in[0,T],$$
where $$X^{p+1}_i(t)\ominus X^{p}_i(t),\quad f_i(t,X^{p}(t),Z^{n+1}(t))\ominus f_i(t,X^{p-1}(t),Z^{p}(t)) \in \mathcal{L}^2_{ad}([0,T]\times \Omega,\mathcal{K}(\mathbb{R}))$$
and $$(Z^{p+1}_i(t)\ominus Z^p_i(t))^T \in \mathcal{K}_w(\mathcal{L}^2_{ad}([0,T]\times \Omega,\mathcal{K}(\mathbb{R}^m)))$$
are the components of $X^{p+1}(t)\ominus X^{p}(t)$,  $f(t,X^{p}(t),Z^{p+1}(t))\ominus f(t,X^{p-1}(t),Z^{p}(t))$ and $Z^{p+1}(t)\ominus Z^p(t)$,  respectively, $i=1\cdots,n$.

Similar to the proof of Theorem \ref{t52}, from Corollary \ref{cor3.2}, Lemma \ref{sitoi} and Assumption \ref{c52} (ii), we have
\begin{align*}
\mathbb{E}\|X^{p+1}(t)\ominus X^p(t)\|^2&+\mathbb{E}\int_t^T\|Z^{p+1}(s) \ominus Z^p(s)\|^2ds\leqslant 4c^2\mathbb{E}\int_t^T\|X^{p+1}(s) \ominus X^{p}(s)\|^2ds
\\
& \mbox{} + \frac{1}{2}\mathbb{E}\int_t^T\|X^{p}(s)\ominus X^{p-1}(s)\|^2ds+ \frac{1}{2}\mathbb{E}\int_t^T\|Z^{p}(s)\ominus Z^{p-1}(s)\|^2ds, \quad i=1,\cdots,n.
\end{align*}
Denote
$$u_p(t)=\mathbb{E}\int_t^T\|X^p(s)\ominus X^{p-1}(s)\|^2ds,\quad v_p(t)=\mathbb{E}\int_t^T\|Z^{p}(s) \ominus Z^{p-1}(s)\|^2ds, \quad p=1,2,\cdots.$$
Then it follows that
$$-\frac{d(u_{p+1}(t)e^{4c^2t})}{dt}+e^{4c^2t}v_{p+1}(t)\leqslant \frac{1}{2}e^{4c^2t}(u_p(t)+v_p(t)),\quad \forall t\in [0,T], \quad  u_{n+1}(T)=0, \quad p=1,2,\cdots.$$
Integrating from $t$ to $T$ for the two sides of the above inequality, one has
$$u_{p+1}(t)+\int_t^Te^{4c^2(s-t)}v_{p+1}(s)ds \leqslant \frac{1}{2} \int_t^T e^{4c^2(s-t)}u_p(s)ds+\frac{1}{2}\int_t^T e^{4c^2(s-t)}v_p(s)ds,$$
which implies that
\begin{align}
u_{p+1}(t)\leqslant \frac{1}{2} \int_t^T e^{4c^2(s-t)}u_p(s)ds+\frac{1}{2}\int_t^T e^{4c^2(s-t)}v_p(s)ds \label{I1}
\end{align}
and
\begin{align}
\int_t^Te^{4c^2(s-t)}v_{p+1}(s)ds \leqslant \frac{1}{2} \int_t^T e^{4c^2(s-t)}u_p(s)ds+\frac{1}{2}\int_t^T e^{4c^2(s-t)}v_p(s)ds \label{I2}.
\end{align}
Let
$$
c_1=\int_0^T\|Z^1\|ds=\sup\limits_{0\leqslant t\leqslant T}v_1(t),\quad c_2=\int_0^T\|X^1\|ds=\sup\limits_{0\leqslant t\leqslant T}u_1(t).
$$
Iterating the inequalities \eqref{I1} and \eqref{I2}, and taking $t=0$, we have
\begin{align*}
u_{p+1}(0)\leqslant \frac{T(c_1+c_2)}{2^{p-1}}\sum\limits_{k=1}^p\frac{(e^{4c^2T})^k}{k!}, \quad
\int_0^Te^{4c^2s}v_{p+1}(s)ds \leqslant \frac{T(c_1+c_2)}{2^{p-1}}\sum\limits_{k=1}^p\frac{(e^{4c^2T})^k}{k!}.
\end{align*}
Thus, $u_p(0)\to 0$ and $v_p(0)\to 0$. For $q> p$, from Lemma \ref{l2.2}, $X^q(t)\ominus X^p(t)$ and $Z^q(t)\ominus Z^p(t)$ exist and
$$X^q(t)\ominus X^p(t)=X^q(t)\ominus X^{q-1}(t)+X^{q-1}(t)\ominus X^{q-2}(t)+\cdots +X^{p+1}(t)\ominus X^p(t),$$
$$Z^q(t)\ominus Z^p(t)=Z^q(t)\ominus Z^{q-1}(t)+Z^{q-1}(t)\ominus Z^{q-2}(t)+\cdots +Z^{p+1}(t)\ominus Z^p(t).$$
From \eqref{I1}, \eqref{I2} and the triangle inequality, we have
$$\|X^q(t)\ominus X^p(t)\|_s \leqslant (q-p)\frac{T(c_1+c_2)}{2^{p-1}}\sum\limits_{k=1}^p\frac{(e^{4c^2T})^k}{k!}$$
and
$$\|Z^q(t)\ominus Z^p(t)\|_s \leqslant (q-p)\frac{(c_1+c_2)}{2^{p-1}}\sum\limits_{k=1}^p\frac{(e^{4c^2T})^k}{k!}$$
Thus, Theorem \ref{t32} and Remark \ref{r21} show that there exists a pair  $(X, Z)\in \mathcal{L}^2_{ad}([0,T]\times \Omega,\mathcal{K}(\mathbb{R}^n))\times \mathbb{G}$ such that
$$\|X(t)\ominus X^p(t)\|_s \rightarrow 0,\quad \|Z(t)\ominus Z^p(t)\|_s \rightarrow 0.$$
Now \eqref{e4+7} leads to
$$
X(t)+\int_t^TZ(s)dW_s=\xi +\int_t^Tf(s,\omega,X(s),Z(s))ds,  \quad \forall t\in [0,T], \quad \mathbb{P}\mbox{-}a.s..
$$

Next we prove the uniqueness of solutions for \eqref{p2}. Let $(X_1(t),Z_1(t))$ and $(X_2(t),Z_2(t))$ be two solutions. Then
$$X_1(t)+\int_t^TZ_1(s)dW_s=\xi +\int_t^Tf(s,X_1(s),Z_1(s))ds$$
and
$$X_2(t)+\int_t^TZ_2(s)dW_s=\xi +\int_t^Tf(s,X_2(s),Z_2(s))ds.$$
Since $X_1(t)\ominus X_2(t)$, $Z_1(t)\ominus Z_2(t)$ exist, by Assumption \ref{c52} (ii) and Lemma \ref{l2.2}, we know that $f(t,X_1(t),Z_1(t))\ominus f(t,X_2(t),Z_2(t))$ exists    and
\begin{align*}
&f(t,X_1(t),Z_1(t))\ominus f(t,X_2(t),Z_2(t))\\
=&f(t,X_1(t),Z_1(t))\ominus f(t,X_1(t),Z_2(t))+f(t,X_1(t),Z_2(t))\ominus f(t,X_2(t),Z_2(t))
\end{align*}
This implies that
$$X_1(t)\ominus X_2(t)+\int_t^T Z_1(s)\ominus Z_2(s)dW_s=\int_t^T f(s,X_1(s),Z_1(s))\ominus f(s,X_2(s),Z_2(s))ds,$$
which equal to
$$X_{1i}(t) \ominus X_{2i}(t)+\int_t^T Z_{1i}(s)\ominus Z_{2i}(s)dW_s= \int_t^Tf_i(s,X_1(s),Z_1(s))\ominus f_i(s,X_2(s),Z_2(s))ds,\quad i=1,\cdots,n, $$
where
$$X_{1i}(t)\ominus X_{2i}(t),f_i(t,X_1(t),Z_1(t))\ominus f_i(t,X_2(t),Z_2(t)) \in \mathcal{L}^2_{ad}([0,T]\times \Omega,\mathcal{K}(\mathbb{R})$$and$$(Z_{1i}(t)\ominus Z_{2i}(t))^T  \\ \in \mathcal{K}_w(\mathcal{L}^2_{ad}([0,T]\times \Omega,\mathcal{K}(\mathbb{R}^m)))$$
are the component of $X_{1}(t)\ominus X_{2}(t),f(t,X_1(t),Z_1(t))\ominus f(t,X_2(t),Z_2(t))$  and $Z_{1}(t)\ominus Z_{2}(t)$, respectively.  It follows from Corollary \ref{cor3.2}, Lemma \ref{itoi} and Assumption \ref{c52} (ii) that
\begin{align*}
&\quad\;\mathbb{E}\|X_{1}(t)\ominus X_{2}(t)\|^2+\mathbb{E}\int_t^T\|Z_{1}(s) \ominus Z_{2}(s)\|^2ds
\\
&\leqslant \frac{1}{2}\mathbb{E}\int_t^T\|Z_{1}(s) \ominus Z_{2}(s)\|^2ds + (4c^2+\frac{1}{2})\mathbb{E}\int_t^T\|X_{1}(s)\ominus X_{2}(s)\|^2ds, \quad
\end{align*}
Denote
$$u_1(t)=\mathbb{E}\int_t^T\|X_1(s)\ominus X_2(s)\|^2ds,\quad v_1(t)=\mathbb{E}\int_t^T\|Z_1(s) \ominus Z_2(s)\|^2ds.$$
Then it follows that
$$-\frac{d(u_1(t)e^{(4c^2+\frac{1}{2})t})}{dt}+e^{4c^2t}v_1(t)\leqslant \frac{1}{2}e^{(4c^2+\frac{1}{2})t}v_1(t),\quad \forall t\in [0,T].$$
Integrating from $t$ to $T$ for the two sides of the above inequality, we have
\begin{align*}
u_1(t)+\int_t^Te^{(4c^2+\frac{1}{2})(s-t)}v_1(s)ds \leqslant\frac{1}{2}\int_t^T e^{(4c^2+\frac{1}{2})(s-t)}v_1(s)ds
\end{align*}
This implies that $v_1(t)\leqslant \frac{1}{2} v_1(t)$ and $u_1(t) \leqslant  \frac{1}{2}\int_t^T e^{(4c^2+\frac{1}{2})(s-t)}v_1(s)ds$. Thus, we have $v_1(0)=0$ and $u_1(0)=0$.
\end{proof}

\begin{remark}
\begin{enumerate}[($\romannumeral1$)]
\item Theorem \ref{th4.2} reduces to Theorem 5.9 of \cite{Ararat2020Set-valued} when $f(t,X(t),Z(t))\equiv f(t,X(t))$;
\item Theorem \ref{th4.2} gives an answer to an open problem proposed by Ararat et al. \cite{Ararat2020Set-valued}.
\end{enumerate}
\end{remark}


\begin{thebibliography}{99}



\bibitem{Ahmad2006Dynamics}
B. Ahmad, S. Sivasundaram.
\newblock Dynamics and stability of impulsive hybrid setvalued integro-differential equations with delay.
\newblock {\em Nonlinear Anal. TMA}, 2006, 65(11): 2082-2093.

\bibitem{Ahmad2006The}
B. Ahmad, S. Sivasundaram.
\newblock The monotone iterative technique for impulsive hybrid setvalued integro-differential equations.
\newblock {\em Nonlinear Anal. TMA}, 2006, 65(12): 2260-2276.

\bibitem{Alexander2018Mean-Field}
A. Alexander.
\newblock Mean-field type games between two players driven by backward stochastic differential equations.
\newblock {\em Games}, 2018, 9(4): 88.

\bibitem{Al-Hussaini1987Nagoya}
A.N. Al-Hussaini, R.L. Elliott.
\newblock  An extension of It\^{o}'s differentiation formula.
\newblock{\em Nagoya Math. J.}, 1987, 105: 9-18.

\bibitem{Applebaum1984Fermion}
D.B. Applebaum, R.L. Hudson.
\newblock  Fermion It\^{o}'s formula and stochastic evolutions.
\newblock{\em Communications in Mathematical Physics}, 1984, 96(4): 473-496.


\bibitem{Aubin}J.P. Aubin, A. Cellina. {\it Differential Inclusions}. Springer, Berlin, 1984.

\bibitem{Aubin+}J.P. Aubin, H. Frankowska. {\it Set-valued Analysis}. Birkh\"{a}user, Boston, 1990.





\bibitem{Ararat2020Set-valued}
C. Ararat, J. Ma, W.Q. Wu. Set-valued backward stochastic differential equation. 2020, arXiv:2007.15073.

\bibitem{Aswani2019Statistics}
A. Aswani.
\newblock Statistics with set-valued functions applications to inverse approximate optimization.
\newblock {\em Math. Prog.}, 2019, 174(1): 225-251.


\bibitem{Bender2015A}
Bender, Christian, R. Knobloch, P. Oberacker.
\newblock  A generalised It\^o's formula for L\'{e}vy-driven Volterra processes.
\newblock{\em Proces. Appl.}, 2015, 125(8): 2989-3022.

\bibitem{Borkowski2017Forward}
D. Borkowski,
\newblock  Forward and backward filtering based on backward stochastic differential equations.
\newblock {\em Inver. Probl. Imag.}, 2017, 10(2): 305-325.

\bibitem{Catuogno2014Time}
P. Catuogno, C. Olivera.
\newblock Time-dependent tempered generalized functions and It\^{o}'s formula.
\newblock{\em Appl. Anal.}, 2014, 93(3): 539-550.

\bibitem{Deimling}K. Deimling. {\it Multivalued Differential Equations}. Gruyter, Berlin, 1992.

\bibitem{Elliott2012Markovian}
R.J. Elliott, T.K. Siu.
\newblock  Markovian forward-backward stochastic differential equations and stochastic flows.
\newblock {\em Sys. Control Lett.}, 2012, 61(10):1017-1022.

\bibitem{Gradinaru2005m}
M. Gradinaru, I. Nourdin, F. Russo, et al.
\newblock  $m$-order integrals and generalized It\^{o}'s formula: the case of a fractional Brownian motion with any Hurst index.
\newblock{\em  Anna. L'I.H.P. Probab. Statis.}, 2005, 41(4): 781-806.

\bibitem{Hukuhara1967Integration}
M. Hukuhara.
\newblock Integration des applications measurables dont la valeur est un compact convexe.
\newblock {\em Funkcialaj Ekvacioj.}, 1967, 10: 205-223.

\bibitem{Hamel2011Set-valued}
A.H. Hamel, F. Heyde, B. Rudloff.
\newblock Set-valued risk measures for conical market models.
\newblock {\em Math. Finan. Econ.}, 2011, 5(1): 1-28.

\bibitem{Hu2012Backward}
M.S. Hu, S.L. Ji, S.G. Peng, Y.S. Song.
\newblock Backward stochastic differential equations driven by $G$-Brownian motion.
\newblock {\em Stoch. Proces. Appl.}, 2012, 124(1):759-784.

\bibitem{Hamadene1995Zero-sum}
S. Hamadene, J.P. Lepeltier.
\newblock Zero-sum stochastic differential games and backward equations.
\newblock {\em Sys. Control Lett.}, 1995, 24(4): 259-263.

\bibitem{Ito1944Stochastic}
K. It\^{o}.
\newblock Stochastic integral.
\newblock{\em Proc. Imp. Acad. Tokyo}, 1944, 22: 519-524.


\bibitem{Ito1951On}
K. It\^{o}.
\newblock {\it On Stochastic Differential Equations}.
\newblock{Amer. Math. Soc.},  Providence, 1951.

\bibitem{Ito1951Ona}
K. It\^{o}.
\newblock On a formula concerning stochastic differentials.
\newblock{\em Nagoya Math. J.}, 1951, 3:55-65.

\bibitem{Karoui1997Backward}
N.E. Karoui, S.G. Peng , M. C. Quenez.
\newblock Backward stochastic differential equations in finance.
\newblock {\em Math. Finance}, 1997, 7(1):1-71.

\bibitem{Kisielewicz2013Stochastic}
M. Kisielewicz.
\newblock {\it Stochastic Differential Inclusions and Applications}.
\newblock Springer, Berlin, 2013.

\bibitem{Kisielewicz2014Martingale}
M. Kisielewicz.
\newblock Martingale representation theorem for set-valued martingales.
\newblock{\em J. Math. Anal. Appl.}, 2014, 409(1): 111-118.



\bibitem{KR65}K. Kuratowski, C. Ryll-Nardzewski. A general theorem on selectors. {\it Bull. Acad.
Polon. Sci. S\'{e}r. Sci. Math. Astronom. Phys.}, 1965, 13: 397-403.

\bibitem{Lakshmikantham2003Existence}
V. Lakshmikantham, A.A. Tolstonogov.
\newblock  Existence and interrelation between set and fuzzy differential equations.
\newblock {\em Nonlinear Anal. TMA}, 2003, 55(3): 255-268.


\bibitem{Lakshmikantham2006Theory}
V. Lakshmikantham, T.G. Bhaskar, J.V. Devi.
\newblock {\it Theory of Set Differential Equations in a Metric Space}.
\newblock {\em Cambridge Scientific},  Cambridge, 2006.

\bibitem{Li2012Mean-field}
Z. Li, J.W. Luo.
\newblock Mean-field reflected backward stochastic differential equations.
\newblock {\em Stoch. Proces. Appl.}, 2012, 82(11):1961-1968.

\bibitem{Li2010Strong}
J. Li, S. Li, Y. Ogura.
\newblock Strong solution of It\^{o} type set-valued stochastic differential equation.
\newblock {\em Acta Math. Sinica}, 2010, 26(9): 1739-1748.

\bibitem{Peng2009Backward}
S.G. Peng.
\newblock Backward stochastic differential equation driven by fractional Brown motion.
\newblock {\em SIAM J. Control Optim.}, 2009, 48(3): 1675-1700.

\bibitem{Peng1990Adapted}
E. Pardoux, S.G . Peng.
\newblock Adapted solution of a backward stochastic differential equation.
\newblock {\em Sys. Control Lett.}, 1990, 14: 55-61.

\bibitem{Phu2008Stability}
N.D. Phu, L.T. Quang, T.T. Tung.
\newblock  Stability criteria for set control differential equations.
\newblock {\em Nonlinear Anal. TMA}, 2008, 69(11): 3715-3721.

\bibitem{Revuz2008Continuous}
D. Revuz. M. Yor.
\newblock {\it Continuous Martingales and Brownian Motion.}
\newblock Springer, Berlin, 2008.

\bibitem{Tolstonogov}A. Tolstonogov. {\it Differential Inclusions in a Banach Space}.  Springer, Dordrecht, 2000.


\bibitem{Wang2014Linear}
G.C. Wang, H. Xiao.
\newblock Linear quadratic non-zero sum differential games of backward stochastic differential equations with asymmetric information.
\newblock {\em Arxiv:1407.0430}, 2014.

\bibitem{Xu2018Authenticating}
C. Xu, Q. Chen, H.B. Hu, J.L. Xu, X.J. Hei.
\newblock Authenticating aggregate queries over set-valued data with confidentiality.
\newblock {\em IEEE Trans. Knowl.  Data Eng.}, 2018, 30(3): 630-644.



\bibitem{Zaslavski1996Turnpike}
J.A. Zaslavski.
\newblock Turnpike theorem for a class of set-value mappings.
\newblock {\em Numer. Func. Anal. Optim.}, 1996, 17(1-2): 215-240.


\bibitem{Zhang2017Backward}
J.F. Zhang.
\newblock  {\it Backward Stochastic Differential Equations}.
\newblock {\em Springer,  New York}, 2017.




\end{thebibliography}
\end{document}